\newtheorem{defi}{Definition}[section]
\newtheorem{thm}{Theorem}[section]
\newtheorem{hyp}{Hypotheses}[section]
\newtheorem{rmq}{Remark}[section]
\newtheorem{lem}{Lemma}[section]
\numberwithin{equation}{section} \allowdisplaybreaks
\title[Stochastic $3^{rd}$-grade fluids equations in $2d$ and $3d$]{On the Existence and long time behaviour of $H^{1}$-Weak Solutions for $2, 3d$-Stochastic $3^{rd}$-Grade Fluids Equations}
\date{\today}
\subjclass[2010]{76A05;	76D03;	60H15; 93D20;  76M35}
\keywords{Third grade fluids, Martingale	solution,	Stochastic PDE,	Stability.}
\thanks{
	This work is funded by national funds through the FCT - Funda\c c\~ao para a Ci\^encia e a Tecnologia, I.P., under the scope of the projects UIDB/00297/2020 and UIDP/00297/2020 (Center for Mathematics and Applications)}
\author[Raya Nouira]{Raya Nouira}
\address[Raya Nouira]{Center for Mathematics and Applications (NovaMath), NOVA	SST,	Portugal}
\email[Raya Nouira]{raya.nouira@gmail.com}
\author[Fernanda Cipriano]{Fernanda Cipriano}
\address[Fernanda Cipriano]{Center for Mathematics and Applications (NovaMath), NOVA SST and Department of Mathematics, NOVA SST, Portugal}
\email[Fernanda Cipriano]{cipriano@fct.unl.pt}
\author[Yassine Tahraoui]{Yassine Tahraoui}
\address[Yassine Tahraoui]{Center for Mathematics and Applications (NovaMath), NOVA SST, Portugal}
\email[Yassine Tahraoui]{tahraouiyacine@yahoo.fr}
\begin{document}

\begin{abstract}
In the present work, we investigate stochastic third grade fluids equations in a $d$-dimensional setting, for $d = 2, 3$. More precisely, on a bounded and simply connected domain $\mathcal{D}$ of $\mathbb{R}^d$, $d = 2,3$, with a sufficiently regular boundary $\partial \mathcal{D},$ we consider  incompressible third grade fluid equations perturbed by a multiplicative Wiener noise. Supplementing our equations by  Dirichlet boundary conditions and taking initial data in the Sobolev space $H^1(\mathcal{D})$, we establish the existence of global stochastic weak solutions by performing a  strategy based on the conjugation of stochastic compactness criteria and monotonicity techniques. Furthermore, we study the asymptotic behaviour of these solutions, as $t \to \infty$. 

\end{abstract}

\maketitle


\section{Introduction}
\setcounter{equation}{0}

For decades, the motion of Newtonian fluids, modelled by  Navier-Stokes equations, has been studied from mathematical and physical perspectives. Mathematical difficulties in dealing with these equations have been identified, and nowadays theoretical aspects and practical applications are fairly well understood. However, their application in modelling some fundamental physical phenomena such as turbulence still remains a challenge. Although a wide range of fluids in nature exhibit a linear relation between shear stress and shear rate, being classified as Newtonian fluids, a relevant category of natural fluids such as mud, shifting sands, magma, blood plasma and most fluids used in engineering and technology, food processing, the cosmetics industry, etc., do not possess such a linear relation and belong to the class of non-Newtonian fluids. Among these fluids we find the so-called nanofluids which, by enhancing rapid thermal conductivity properties, feature great potential for use in modern and future technologies. Practical studies revealed that third grade fluids equations model quite well the rheological properties of these nanofluid materials, and have recently received special attention from engineers, physicists and mathematicians (see $e.g.$ \cite{hayat2007analytical, hayat2003exact, zhang2023entropy, rasheed2017stabilized, raja2022intelligent} and references therein).

This work is devoted to the mathematical study of stochastic incompressible third grade fluids equations for the random velocity vector field $u$, which reads
\begin{align}
d(u-\alpha_{1}\Delta u) = (&-\nabla p + \mu\Delta u - u\cdot \nabla u + \alpha_{1}\text{div}(u\cdot\nabla A+ (\nabla u)^{T}A+A(\nabla u))\notag\\
&+\alpha_{2}\text{div}(A^{2})+\beta \text{div}(\vert A\vert^{2}A)+\Phi(\cdot,u))dt+ \sigma(\cdot,u)d\mathcal{B}(\cdot), \qquad \text{div} u=0. \label{stoch}
\end{align}
Here $p$ corresponds to the pressure, $\mu>0$ denotes the viscosity of the fluid, $\alpha_{1}$, $\alpha_{2}$ are  normal stress modulli, $\beta>0$ is the specific material modulli and  $ A := A(u)= \nabla u + (\nabla u)^{T}$ represents the Rivlin-Ericksen kinematic tensor $A_1$. Moreover, $\Phi(\cdot, u)$ represents a deterministic external force and $\sigma(\cdot,u)d\mathcal{B}(\cdot)$ stands for the stochastic perturbation, also called the multiplicative Wiener noise. According to Fosdick-Rajagopal \cite{fosdick1980thermodynamics} and Dunn-Rajagopal \cite{dunn1995fluids}, physical and thermodynamic considerations impose the following restrictions on the physical constants
\begin{align}
 \alpha_{1}\geq 0,\quad  \vert \alpha_{1} + \alpha_{2}\vert \leq (24\mu\beta)^{\frac{1}{2}}.\label{Fosdic}
\end{align}

We are faced with a highly non-linear stochastic partial differential equation, in which some of the non-linear terms have derivatives of a higher order than the linear ones, thus bringing major challenges to the analysis.

We recall that the first existence and uniqueness results for local solutions in $\mathbb{R}^{d}$, $d=2,3$, and existence of  global solutions in $\mathbb{R}^{2}$, to deterministic third grade fluids equations ($\sigma = 0$), have been derived by Amrouche and Cioranescu \cite{amrouche1997class}, under small initial data in the Sobolev space $H^3$.
Later on, for initial data in the Sobolev space $H^2$, Busuioc and Iftime \cite{busuioc2004global} showed the global existence and uniqueness of solutions in $\mathbb{R}^2$, and only the existence in $\mathbb{R}^3$. Under the same regularity of the initial data, they also proved in \cite{busuioc2006non} the existence of a global weak solution to a third grade fluid equation supplemented with Navier slip boundary conditions in a bounded domain of $\mathbb{R}^{d},$ $d=2,3$. The uniqueness was obtained only in $\mathbb{R}^2$. Right after, Paicu \cite{paicu2008global} established global $H^1$-weak solutions in $\mathbb{R}^d,$ $d=2,3,$ and showed the weak-strong uniqueness of the solution in $\mathbb{R}^{2}.$

Recently, the authors in \cite{cipriano2021well}, \cite{almeida2020weak}, proved existence and uniqueness results for stochastic third grade fluids equations with multiplicative white noise, on bounded non-axisymmetric domains of $\mathbb{R}^d,$ $d = 2, 3,$ under slip boundary conditions and $H^2$-initial data. In $2d$, a uniqueness type method conjugated with a stopping time argument was applied to show the existence and uniqueness of a strong stochastic solution. In $3d$, a martingale solution (defined on another probability space) was derived by applying stochastic compactness arguments. We emphasize that the above results rely strongly on slip boundary conditions and the $H^2$ regularity of the initial data as they were crucial to perform the analyses; roughly speaking, the slip boundary conditions yield a derivative of order one at the level of boundary terms arising from integration by parts, thus avoiding the creation of strong boundary layers (see $e.g.$ \cite{chemetov2013boundary, chemetov2013inviscid, chemetov2018optimal} and references therein).

Due to its extreme relevance in industrial and technological applications, optimal control of flow should receive special interest. The authors in \cite{tahraoui2023optimal}, \cite{tahraoui2023local} address the problem of deterministic and stochastic optimal control for third grade fluids equations under slip boundary conditions, where the control acts through a distributed mechanical force. In order to control the dynamics, the analysis requires $H^3$ regularity of the solutions. Under the latter regularity, a local well-posedness of solutions was established in \cite{tahraoui2023local}.

In view of martingale solutions, several authors have studied their existence for different models, here we mention, in particular, some works derived in the direction of fluid dynamics (see $e.g.$ \cite{flandoli1995martingale, brzezniak2013existence, chen2019martingale, almeida2020weak} and references therein). Although martingale solutions have been considerably developed, emphasizing mainly for stochastic Navier Stokes equations, it is very important to study this type of solutions for fluids equations of higher complexity, such as third grade fluids equations, as they are technically intractable and require careful analysis.
 
Other relevant aspects to investigate in the context of non-Newtonian flows are the developing turbulence and boundary layer problems, typically associated with less regular solutions. With these questions in mind, this work tackles the existence problems of solutions to the equations \eqref{stoch} on bounded domains of $\mathbb{R}^d$, $d = 2, 3$, for initial data only in $H^1$ and under Dirichlet boundary conditions, as it also addresses the long time behaviour of these solutions. More precisely, throughout this article, we prove the existence of martingale solutions with sample paths in $H^1$. In addition, we analyse the asymptotic behaviour of the latter, as $t \to \infty$; namely, under certain conditions on the force and diffusion coefficients and assuming that the viscosity is large enough (a sufficiently small Reynolds number), we prove the exponential decay of $H^1$ stochastic weak solutions to the stationary trivial equilibrium in the mean square and almost surely. Thus recalling the importance behind studying the long time behaviour of fluids flows. 

It is worth stressing that the methods applied here to show existence results are very different from the methods used in previous articles. For initial data in $H^1$, we do not expect to have uniqueness, so the strategies applied previously will not be useful, thus more involved analyses are required. Additionally, the a priori estimates of the solutions hold only in $H^1$ and $W^{1,4}$, which turns out to be insufficient to pass to the limit in the equations solely relying on stochastic compactness arguments. To overcome the difficulty stemming from the low regularity of the initial data, we will take advantage of the monotonicity property of some non-linear terms, that can be written as a non-linear monotone operator, see \cite{paicu2008global, busuioc2008steady}. The remaining terms are manipulated separately through stochastic compactness criteria. Therefore, our strategy is based on an appropriate conjugation of stochastic compactness arguments to pass to the limit the equation in the weak sense, and on the application of monotonicity techniques to identify the limit defined on the new probability space. On the other hand, to prove the stability results, we follow methods similar to those used in $e.g.$ \cite{caraballo2002exponential, cipriano2019asymptotic}. Furthermore, we discuss, in particular, the stability properties of the stationary solution, with certain regularities, to deterministic third grade fluids equations of steady motion type.

The remainder of the article is structured as follows, Section 2 consists of a self-completed work set-up, where we give the main system to be studied, then introduce appropriate spaces, preliminary results and definitions, and necessary assumptions for our analysis. Section 3 is entirely devoted to the main result, $i.e.$ the existence of  martingale solutions to the considered problem. This section is quite long, but self-contained, as it is divided into six subsections, in the first one, we state the main result, implement the method of Galerkin approximations along with cut-off techniques and prove the a priori estimates. The second subsection provides convenient uniform estimates, and in the third we apply the stochastic compactness criterion including the tightness property and Prokhorov's theorem. The fourth subsection consists in applying Skorokhod's theorem and establishing convergence results. In the fifth subsection, we introduce the so-called monotonicity property. The last subsection deals with the last step of the proof of the main result, where we perform the limit passage. Finally, section 4 studies the asymptotic behaviour of martingale solutions, as $t\to\infty.$

\begin{center}
\section{Notations and preliminaries}
\setcounter{equation}{0}
\end{center}

Let $\mathcal{D}$ be a bounded and simply connected domain of the space $\mathbb{R}^{d}$, $d=2,3$ with sufficiently regular boundary $\partial{\mathcal{D}}$. For a finite horizon $T >0$, let $(\Omega, \mathcal{F}, \{\mathcal{F}_t\}, \mathbb{P})$ be a stochastic basis, with $\{\mathcal{F}_t\} := \{\mathcal{F}_{t}\}_{t\in[0,T]}$ being a normal filtration $i.e.$ a filtration satisfying the usual conditions.

Consider the following stochastic third grade fluids system supplemented with Dirichlet boundary conditions
\begin{equation}
\left\{
\begin{array}{cccc}
d(U-\alpha_{1}\Delta U)= \bigl(-\nabla p+ \mu\Delta U - U\cdot \nabla U + \alpha_{1}\mathrm{div}(U \cdot \nabla A+(\nabla U)^{T}A +A(\nabla U)) \vspace{2mm} \\
		\vspace{2mm}
		+\alpha_{2} \mathrm{div}(A^{2})+\beta{\rm div}\left(|A|^2 A\right)+ \Phi(\cdot,U) \bigr)dt
		+ \sigma(\cdot,U) d\mathcal{B}(t)
		&  \multicolumn{1}{l}{\mbox{in}\
			\mathcal{D}\times\Omega\times (0,T),} \vspace{2mm}\\
			\multicolumn{1}{l}{\mathrm{div}\,U=0} & \multicolumn{1}{l}{\mbox{in}\
			\mathcal{D}\times \Omega\times (0,T),} \vspace{2mm}\\
			\multicolumn{1}{l}{U=0} & \multicolumn{1}{l}{\mbox{on}\
			\partial{\mathcal{D}}\times \Omega \times (0,T),}\vspace{2mm} \\
			\multicolumn{1}{l}{U(0)=U_{0}} & \multicolumn{1}{l}{\mbox{in}\ \mathcal{D}\times \Omega,}
\end{array}
\right.\label{syst}
\end{equation}
where $U =(U^{i})_{i=1}^{d},$ $d=2,3$, is the velocity vector field, such that for any arbitrary $t\in [0,T],$ the random variable $U(t)$ is $\{\mathcal{F}_t\}$-measurable,  $U_0$ is the initial data, $\sigma(\cdot,U)$ denotes the diffusion coefficient and $(\mathcal{B}(t))_{t \in [0,T]}$ is a cylindrical Wiener process with respect to the filtration $\{\mathcal{F}_t\}_{t \in [0,T]}$.
\vspace{0.2cm}
\subsection{General framework and related lemmas}
For convenience, we do not distinguish between notations for scalars, vectors, and matrices when the context is clear. We omit the notation for the omega dependence, since it is obvious that we are working in a random framework. When not necessary, we also drop the notations for time and space dependence.\\

For $d=2,3$, we denote the usual scalar product of vector fields $u, y\in \mathbb{R}^{d}$ by $u\cdot y= \sum_{i=1}^{d}u^{i}y^{i},$ and of matrices $M = (M^{ij})_{i,j}$, $L=(L^{ij})_{i,j} \in \mathcal{M}_{d\times d}(\mathbb{R}^d)$ by $M\cdot L= \sum_{i,j=1}^{d}M^{ij}L^{ij}.$ Moreover, we set the notations $\vert M \vert^{2} = M\cdot M,$ $M^{2}= (\sum_{k=1}^{d}M^{ik}M^{kj})_{i,j=1}^{d},$ and we define the divergence of a matrix $M$ as follows $(\text{div}(M))_{i}=\sum_{j=1}^{d}\partial_{j} M^{ij}.$ \vspace{0.2cm}

Given the bounded domain $\mathcal{D}\subset \mathbb{R}^{d}$, $d=2,3$, let $L^{p}(\mathcal{D})$, $p\geq1$, be the Lebesgue space endowed with the norm $\|\cdot\|_{p},$ $W^{r,p}(\mathcal{D}),$ $r \in \mathbb{N}$, $p\geq 1$, be the Sobolev spaces endowed with the norm $\|\cdot\|_{W^{r,p}}$ and $H^{r}(\mathcal{D}),$ $r\in \mathbb{N}$, be the Sobolev spaces $W^{r,p}(\mathcal{D}),$ for $p = 2$, endowed with the norm $\|\cdot\|_{H^{r}}$. In order to keep notations simple for spaces in $d=2,3$ dimension, we set out the following, $\textbf{L}^{p}(\mathcal{D}):=( L^{p}(\mathcal{D}))^d,$ $\textbf{W}^{r,p}(\mathcal{D}):= (W^{r,p}(\mathcal{D}))^d$ and $\textbf{H}^r(\mathcal{D}) := (H^r(\mathcal{D}))^d.$ Note that the notations for norms remain the same in $d= 2,3$ dimension.\vspace{0.2cm}

Let us denote by $\mathcal{V}$ the space of $\mathcal{C}^\infty (\mathcal{D})$ vector fields $u$ with compact support, satisfying $\text{div} u =0$ in $\mathcal{D}$. Define $H$ and $V$ to be the closure of $\mathcal{V}$ in $\textbf{L}^{2}(\mathcal{D})$ and $\mathbf{H}^{1}_0(\mathcal{D})$ respectively, that is 
\begin{align*}
&H = \lbrace u \in \mathbf{L}^{2}(\mathcal{D}) : \mathrm{div}u = 0 \phantom{x} \text{in} \phantom{x} \mathcal{D} \phantom{x} \text{and} \phantom{x} u\cdot n=0 \phantom{x} \text{on} \phantom{x} \partial\mathcal{D}\rbrace; \\ &
V = \lbrace u \in \mathbf{H}^{1}_{0}(\mathcal{D}) : \mathrm{div}u = 0 \phantom{x} \text{in} \phantom{x} \mathcal{D}\rbrace.
\end{align*}
$H$ denotes the Hilbert space endowed with the usual $L^{2}$-inner product $(\cdot,\cdot)$ together with the associated norm $\|\cdot\|_{2},$ and $V\subset \mathbf{H}^{1}(\mathcal{D})$ is the Hilbert space endowed with the following inner product
\begin{equation}
(u,y)_{V}:= (u-\alpha_{1}\Delta u,y) = (u,y)+\alpha_{1}(\nabla u,\nabla y), \phantom{xx} \text{for any} \phantom{x} u,y \in V,\label{innerV}
\end{equation}
where $\alpha_1> 0,$ and the associated norm $\|\cdot \|_{V}$ being equivalent to the norm in $\mathbf{H}^1(\mathcal{D}),$ $\|\cdot\|_{H^{1}}$. We hereby recall the following
\begin{equation*}
(u,y) = \int_{\mathcal{D}}u \cdot y dx, \quad \forall u,y \in H, \qquad (M,L) = \int_{\mathcal{D}} M \cdot L dx, \quad \forall M, L \in \mathcal{M}_{d \times d}(H).
\end{equation*}

Let $E$ denote a real Banach space endowed with the norm $\|\cdot\|_{E}$. We define for $d=2,3$
$$\mathbf{E}:= (E)^d =\{(h^1,\cdots,h^d), \quad h^l\in E, \quad l=1,\cdots,d\},$$
and $\|\cdot\|_{E}$ is understood as follows
\begin{align*}
&\| h\|_E^2= \| h^1\|_E^2+\cdots+\| h^d\|_E^2 \quad \text{for any} \quad h=(h^1,\cdots,h^d) \in \mathbf{E}, \notag \\ 
&\| h\|_{E}^2= \sum_{i,j=1}^d\| h^{ij}\|_E^2 \quad \text{for any} \quad h\in \mathcal{M}_{d\times d}(E).
\end{align*} 

In addition, let $L^{p}(0,T;\mathbf{E}),$ $p\geq1$, be the space of measurable and $p$-integrable functions $u$ defined on $[0,T]$ with values in $\mathbf{E}$. Denote by $\mathcal{C}^{0,\alpha}([0,T], \mathbf{E}),$ $\alpha \in (0,1),$ the space of $\alpha$-Hölder continuous functions defined on $[0,T]$ with values in $\mathbf{E}$ and endowed with the norm
\begin{equation}
\|u\|_{\mathcal{C}^{0,\alpha}([0,T],E)}=\sup_{t\in [0,T]}\|u(t)\|_{E} + \sup_{t,s \in [0,T], t\neq s} \dfrac{\|u(t)-u(s)\|_{E}}{\vert t-s \vert^{\alpha}},\label{normHolder}
\end{equation}
and let $W^{\gamma,q}(0,T;\mathbf{E})$, $\gamma \in (0,1)$, $q\in (1,\infty)$, be the fractional Sobolev space endowed with the norm
\begin{equation}
\|u\|_{W^{\gamma,q}(0,T;E)}^{q} = \|u\|_{L^{q}(0,T;E)}^{q}+\int_{0}^{T}\int_{0}^{T}\dfrac{\|u(t)-u(s)\|_{E}^{q}}{\vert t-s\vert^{1+\gamma q}}dsdt.\label{normW}
\end{equation}

\begin{lem}[\cite{simon1986compact}]
Consider the Banach spaces $\mathbf{E}_{0}$, $\mathbf{E}_{1}$ and $\mathbf{E}$ such that $\mathbf{E}_0\subset \mathbf{E}\subset \mathbf{E}_1$. Assume that the first inclusion is compact. Then, for $\alpha\in(0,1)$ and $p \in (1,\infty)$, the space 
$$L^p(0,T; \mathbf{E}_0)\cap \mathcal{C}^{0,\alpha}([0,T],\mathbf{E}_1),$$
\label{C.2}
with the norm 
\begin{equation}
\|u\|_{\alpha,p, E_0, E_1}:=\|u\|_{L^p(0,T; E_0)}+\|u\|_{\mathcal{C}^{0,\alpha}([0,T],E_1)}\label{norm.com}
\end{equation}
is compactly embedded in the space $C([0,T], \mathbf{E}).$
\end{lem}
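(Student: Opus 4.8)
The plan is to run the H\"older variant of the classical Aubin--Lions--Simon compactness scheme of \cite{simon1986compact}, whose two ingredients are an Ehrling (Lions) interpolation inequality and a Banach-valued Arzel\`a--Ascoli theorem. Since the inclusion $\mathbf{E}_0\subset\mathbf{E}$ is compact and $\mathbf{E}\subset\mathbf{E}_1$ is continuous, Ehrling's lemma provides, for every $\varepsilon>0$, a constant $C_\varepsilon>0$ such that
\begin{equation}
\|v\|_{\mathbf{E}}\leq\varepsilon\,\|v\|_{\mathbf{E}_0}+C_\varepsilon\,\|v\|_{\mathbf{E}_1}\qquad\text{for all } v\in\mathbf{E}_0. \label{ehr}
\end{equation}
It is then enough to prove that any sequence $(u_n)_n$ with $\|u_n\|_{\alpha,p,E_0,E_1}\leq M$ for all $n$ admits a subsequence which is Cauchy in $C([0,T],\mathbf{E})$.

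From the uniform bound I would first extract two facts. The $\mathcal{C}^{0,\alpha}([0,T],\mathbf{E}_1)$ part yields $\sup_{n,t}\|u_n(t)\|_{\mathbf{E}_1}\leq M$ and $\|u_n(t)-u_n(s)\|_{\mathbf{E}_1}\leq M|t-s|^{\alpha}$ for all $t,s$, while the $L^p(0,T;\mathbf{E}_0)$ part yields, via H\"older's inequality, $\bigl\|\frac{1}{t_2-t_1}\int_{t_1}^{t_2}u_n\,d\tau\bigr\|_{\mathbf{E}_0}\leq (t_2-t_1)^{-1/p}M$ for $0\leq t_1<t_2\leq T$. Fixing $h>0$ and setting $(S_hu_n)(t):=\frac1h\int_t^{t+h}u_n(\tau)\,d\tau$ on $[0,T-h]$, the second fact gives a bound on $S_hu_n$ in $C([0,T-h],\mathbf{E}_0)$ which is uniform in $n$ (though it depends on $h$), while $\frac{d}{dt}(S_hu_n)(t)=\frac1h(u_n(t+h)-u_n(t))$ and the H\"older estimate make $S_hu_n$ Lipschitz in $\mathbf{E}_1$ with constant $\leq Mh^{\alpha-1}$. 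Hence, using the compactness of $\mathbf{E}_0\subset\mathbf{E}$ for the pointwise relative compactness in $\mathbf{E}$ and \eqref{ehr} for the equicontinuity in $\mathbf{E}$, Arzel\`a--Ascoli shows that $\{S_hu_n\}_n$ is relatively compact in $C([0,T-h],\mathbf{E})$; a diagonal extraction over $h_k\downarrow0$ then produces a single subsequence, still labelled $(u_n)_n$, along which $S_{h_k}u_n$ converges in $C([0,T-h_k],\mathbf{E})$ for every $k$.

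It remains to transfer this to $(u_n)_n$ itself. Since $u_n(t)-(S_hu_n)(t)=\frac1h\int_t^{t+h}(u_n(t)-u_n(\tau))\,d\tau$, the H\"older bound gives $\|u_n-S_hu_n\|_{C([0,T-h],\mathbf{E}_1)}\leq Mh^{\alpha}$; combining this with \eqref{ehr} and the $L^p$-control of $u_n$ in $\mathbf{E}_0$, one shows that $S_hu_n$ approximates $u_n$ in $C([0,T-h],\mathbf{E})$ uniformly in $n$ as $h\downarrow0$ (the full interval $[0,T]$ being recovered by an obvious modification near the endpoint). Together with the relative compactness of $\{S_{h_k}u_n\}_n$ for each fixed $k$, this forces $(u_n)_n$ to be Cauchy in $C([0,T],\mathbf{E})$, which is the claim. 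Equivalently, one may verify directly the two conditions of Simon's criterion \cite[Thm.~5]{simon1986compact}: the relative compactness in $\mathbf{E}$ of $\{\int_{t_1}^{t_2}u_n\}_n$, which is immediate from the bound on time averages and the compact embedding, and $\sup_n\|u_n(\cdot+h)-u_n(\cdot)\|_{C([0,T-h],\mathbf{E})}\to0$ as $h\to0$.

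The main obstacle is precisely this last estimate: the a priori bound sits only in $L^p$, not in $L^\infty$, with respect to the strong space $\mathbf{E}_0$, so the pointwise norms $\|u_n(t)\|_{\mathbf{E}_0}$ are not directly available and equicontinuity of $(u_n)_n$ in $\mathbf{E}$ does not follow from \eqref{ehr} at once. The way around is a time interpolation: near any instant $t$ one selects, via Chebyshev's inequality applied to $\int_0^T\|u_n\|_{\mathbf{E}_0}^p\,d\tau$, an instant $\tau$ at which $\|u_n(\tau)\|_{\mathbf{E}_0}$ is controlled in terms of the size of the neighbourhood, propagates the information from $\tau$ to $t$ through the $\mathbf{E}_1$-H\"older continuity, and balances the two scales in \eqref{ehr}; the compactness of $\mathbf{E}_0\subset\mathbf{E}$ then upgrades boundedness at the good instants into the required equicontinuity. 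All the remaining manipulations are routine.
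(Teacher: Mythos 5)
The paper itself gives no proof of this lemma (it is quoted from \cite{simon1986compact}), so your sketch has to stand on its own. Your set-up is correct and carefully executed up to a point: Ehrling's inequality \eqref{ehr} is legitimate, the averages $S_hu_n$ are indeed bounded in $C([0,T-h],\mathbf{E}_0)$ by $Mh^{-1/p}$ and Lipschitz in $\mathbf{E}_1$ with constant $Mh^{\alpha-1}$, and the Arzel\`a--Ascoli argument showing that $\{S_hu_n\}_n$ is relatively compact in $C([0,T-h],\mathbf{E})$ for each fixed $h$ is sound, as is the diagonal extraction. The gap is exactly where you place it, but the repair you sketch does not close it. To transfer compactness from $S_hu_n$ to $u_n$ you need $\sup_n\|u_n-S_hu_n\|_{C([0,T-h],\mathbf{E})}\to0$ as $h\downarrow0$ (equivalently, the second condition of Simon's criterion in the sup norm of $\mathbf{E}$). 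Applying \eqref{ehr} to $v=u_n(t)-S_hu_n(t)$ requires a pointwise bound on $\|v\|_{\mathbf{E}_0}$, which is not available: only the $L^p$-in-time average of $\|u_n\|_{\mathbf{E}_0}$ is controlled. The Chebyshev selection gives an instant $\tau$ with $|t-\tau|\leq h$, $\|u_n(\tau)\|_{\mathbf{E}_0}\leq Mh^{-1/p}$ and $\|u_n(t)-u_n(\tau)\|_{\mathbf{E}_1}\leq Mh^{\alpha}$, but this controls nothing about $\|u_n(t)-u_n(\tau)\|_{\mathbf{E}}$: the point $u_n(t)$ carries no $\mathbf{E}_0$-bound of its own, and $\mathbf{E}_1$-proximity to a compact subset of $\mathbf{E}$ does not imply $\mathbf{E}$-proximity. "Balancing the two scales in \eqref{ehr}" is precisely the step that cannot be carried out.

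The obstruction is not a removable technicality: with three genuinely distinct spaces the statement is false without a further hypothesis linking $\alpha$, $p$ and the spaces. Take $\mathbf{E}_0=H^1_0(0,1)$, $\mathbf{E}=L^2(0,1)$, $\mathbf{E}_1=H^{-m}(0,1)$ with $m>\alpha p$, let $\phi_n(x)=\sqrt{2}\sin(n\pi x)$ and $u_n(t)=a_n(t)\phi_n$, where $a_n$ is a tent function of height $1$ and half-width $n^{-p}$ centred at $t=1/2$. Then $\|u_n\|_{L^p(0,1;H^1)}\leq C$ and $\|u_n\|_{\mathcal{C}^{0,\alpha}([0,1],H^{-m})}\leq Cn^{\alpha p-m}\to0$, yet $u_n(1/2)=\phi_n$ satisfies $\|\phi_n-\phi_k\|_{L^2}=\sqrt{2}$ for $n\neq k$, so $(u_n)$ has no subsequence converging in $C([0,1],L^2)$; in particular the uniform equicontinuity in $\mathbf{E}$ that your last paragraph promises genuinely fails. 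What saves the paper is that the lemma is only ever invoked with $\mathbf{E}=\mathbf{E}_1=H$ and $\mathbf{E}_0=V$, and in that degenerate case your scheme closes immediately: $\|u_n-S_hu_n\|_{C([0,T-h],\mathbf{E})}=\|u_n-S_hu_n\|_{C([0,T-h],\mathbf{E}_1)}\leq Mh^{\alpha}$, and the pointwise relative compactness in $H$ does follow from your Chebyshev selection, since $\{u_n(t)\}_n$ then lies within $H$-distance $Mh^{\alpha}$ of the compact set $\overline{B_{V}(0,Mh^{-1/p})}$ for every $h>0$ and is therefore totally bounded. So your argument is salvageable, but only after either setting $\mathbf{E}=\mathbf{E}_1$, replacing $L^p(0,T;\mathbf{E}_0)$ by $L^\infty(0,T;\mathbf{E}_0)$, or adding an interpolation hypothesis such as $\|v\|_{\mathbf{E}}\leq C\|v\|_{\mathbf{E}_0}^{\theta}\|v\|_{\mathbf{E}_1}^{1-\theta}$ with $\theta/p<(1-\theta)\alpha$; as written, the transfer step is a genuine gap.
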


Define the trilinear form $b(\cdot, \cdot, \cdot) : V \times V \times V \to \mathbb{R}$ by
\begin{align}
b(u,y,z)& = (u\cdot \nabla y, z).\label{trilin}
\end{align}
Since $u \in V$, then a standard integration by parts yields
\begin{equation}
b(u,y,z)= -b(u,z,y), \qquad \forall u,y,z \in V.\label{trilin2}
\end{equation}

Korn's and Poincaré inequalities and the embeddings $V \hookrightarrow \mathbf{L}^4(\mathcal{D}) \hookrightarrow \mathbf{L}^2(\mathcal{D})$, yield the following lemma.
\begin{lem}
There exists a positive constant $C_*$, such that 
\begin{equation}
\|u\|_{W^{1,4}} \leq C_*\|A(u)\|_{4}, \quad \forall u \in V. \label{KORN*}
\end{equation}
\end{lem}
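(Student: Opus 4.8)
The plan is to split the $\mathbf W^{1,4}$-norm as $\|u\|_{W^{1,4}}=\|u\|_4+\|\nabla u\|_4$ and to bound each piece by $\|A(u)\|_4$. There is nothing to prove unless $A(u)\in\mathbf L^4(\mathcal D)$, so assume this. The gradient part is the heart of the matter: I claim the Korn-type bound
\begin{equation*}
\|\nabla u\|_4\le C_K\,\|A(u)\|_4,\qquad u\in V,\ A(u)\in\mathbf L^4(\mathcal D),
\end{equation*}
holds; granting it, $u\in\mathbf W^{1,4}(\mathcal D)\cap\mathbf H^1_0(\mathcal D)=\mathbf W^{1,4}_0(\mathcal D)$ (the boundary being regular), so Poincar\'e's inequality in $\mathbf W^{1,4}_0(\mathcal D)$ gives $\|u\|_4\le C_P\|\nabla u\|_4\le C_PC_K\|A(u)\|_4$, and hence $\|u\|_{W^{1,4}}\le(1+C_P)C_K\|A(u)\|_4$; one sets $C_*:=(1+C_P)C_K$. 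Alternatively the $\mathbf L^4$-part can be absorbed through the chain $\|u\|_4\le C\|u\|_{H^1}\le C'\|\nabla u\|_2\le C'|\mathcal D|^{1/4}\|\nabla u\|_4$, using the embedding $V\hookrightarrow\mathbf L^4(\mathcal D)$, Poincar\'e in $\mathbf H^1_0$ and H\"older's inequality.

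To prove the Korn-type bound I would extend $u$ by zero to a field $\tilde u\in\mathbf H^1(\mathbb R^d)$; since $u\in\mathbf H^1_0(\mathcal D)$ is divergence free, $\tilde u$ is divergence free on $\mathbb R^d$ and $A(\tilde u)$ is the zero extension of $A(u)$, so $A(\tilde u)\in\mathbf L^2(\mathbb R^d)\cap\mathbf L^4(\mathbb R^d)$. Passing to Fourier transforms and using $\mathrm{div}\,\tilde u=0$ one finds $\sum_k\xi_k\,\widehat{(A(\tilde u))^{ik}}(\xi)=\mathrm i|\xi|^2\,\widehat{\tilde u^i}(\xi)$, whence
\begin{equation*}
\widehat{\partial_j\tilde u^i}(\xi)=\mathrm i\xi_j\,\widehat{\tilde u^i}(\xi)=\sum_{k=1}^{d}\frac{\xi_j\xi_k}{|\xi|^2}\,\widehat{(A(\tilde u))^{ik}}(\xi).
\end{equation*}
Each symbol $\xi_j\xi_k/|\xi|^2$ is smooth and homogeneous of degree zero on $\mathbb R^d\setminus\{0\}$ --- up to sign it is a composition of two Riesz transforms --- so the associated Fourier multiplier operator is bounded on $\mathbf L^p(\mathbb R^d)$ for every $p\in(1,\infty)$, by the Mikhlin--H\"ormander theorem (equivalently, by the $L^p$-boundedness of the Riesz transforms). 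Taking $p=4$ shows $\partial_j\tilde u^i\in\mathbf L^4(\mathbb R^d)$ with $\|\nabla\tilde u\|_{L^4(\mathbb R^d)}\le C_{4,d}\|A(\tilde u)\|_{L^4(\mathbb R^d)}$, and restricting to $\mathcal D$ (where $\tilde u=u$ and both sides localise) gives the claim with $C_K=C_{4,d}$.

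The genuinely non-trivial point --- and the main obstacle --- is exactly this $\mathbf L^4$-Korn inequality: in $\mathbf L^2$ it reduces to a plain integration by parts, but for $p\neq2$ it relies on Calder\'on--Zygmund/multiplier theory, and one should take a little care that the zero extension and the Fourier identity are justified for $u$ only in $\mathbf H^1_0(\mathcal D)$ rather than smooth (this is routine: the identity holds as an a.e. equality of $L^2_{\mathrm{loc}}$ functions, and the multiplier bound then promotes $\nabla u$ to $\mathbf L^4$). In practice one may also simply invoke the $L^p$-Korn inequality on bounded Lipschitz domains as a known result, which is the shortest route and the one I would adopt in the final version.
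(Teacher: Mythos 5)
Your proposal is correct and follows the same route the paper itself indicates: the paper states the lemma as an immediate consequence of Korn's and Poincar\'e's inequalities together with the embedding $V\hookrightarrow \mathbf{L}^4(\mathcal{D})$, and offers no further proof. The only difference is that you supply a complete justification of the $L^4$ Korn inequality (via zero extension of the $\mathbf{H}^1_0$ divergence-free field, the Fourier identity $\widehat{\partial_j\tilde u^i}=\sum_k(\xi_j\xi_k/|\xi|^2)\widehat{(A(\tilde u))^{ik}}$, and $L^p$-boundedness of the Riesz transforms), which the paper simply takes for granted; this filling-in is accurate and the treatment of the $\mathbf{L}^4$ part by Poincar\'e (or by the chain through $\|\nabla u\|_2$) matches the paper's intent.
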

In the sequel and throughout the paper, $C$ will denote a positive generic constant that may change from line to line in accordance with the various inequalities and computations.

\begin{hyp}
\label{Hyp1}
We present here assumptions on the data of the system \eqref{syst}. Let $2<p<\infty$.
\begin{enumerate}
\item[$(H_{1})$] The initial condition, $U_{0}: \Omega \rightarrow V,$ is $\mathcal{F}_{0}$-measurable and belongs to the space $L^{p}(\Omega,V),$ i.e. $\mathbb{E}\|U_{0}\|_{V}^{p}< \infty.$
\item[$(H_{2})$] There exist two constants $\kappa, \ell> 0$ such that, the force $\Phi : [0,T] \times V \to \mathbf{L}^{2}(\mathcal{D})$ satisfies 
\begin{align}
&\|\Phi(t,u) - \Phi(t,y)\|_{2}^{2}\leq \kappa\|u-y\|_{2}^{2}, \quad \forall u,y \in V, \phantom{x} t \in [0,T],\label{lipschitz1}
\\ &
\|\Phi(t,u)\|_{2}^{2}\leq \ell(1+\|u\|_{V}^{2}), \quad \forall u \in  V, \phantom{x} t \in [0,T].\label{growth1}
\end{align}
\item[$(H_{3})$] The viscosity $\mu$, the normal stress modulli $\alpha_{1}$, $\alpha_{2}$ and the material modulli $\beta$ satisfy the conditions
\begin{equation}
\mu, \beta \geq 0 \quad \text{and} \quad 3\alpha_{1}^{2} + 4(\alpha_{1}+\alpha_{2})^{2}\leq 24\mu\beta. \label{RESTMON}
\end{equation}
\end{enumerate}
Note that the restrictions \eqref{RESTMON} are strongly related to monotonicity techniques.
\end{hyp}

Consider the generalized Stokes problem with the Dirichlet boundary condition
\begin{equation}
\begin{cases}
\tilde{f}-\alpha_{1}\Delta \tilde{f}+\nabla p = f, \phantom{x}  \text{div}\tilde{f}=0 \phantom{x} \text{in} \phantom{x} \mathcal{D},\\
\tilde{f} = 0 \phantom{x} \text{on} \phantom{x} \partial \mathcal{D}.\label{Stokes1}
\end{cases}
\end{equation}
According to \cite{temam2001navier} (see also \cite{razafimandimby2010weak}), for any $f \in \mathbf{H}^{m}(\mathcal{D})$, $m\geq 0,$ the problem \eqref{Stokes1} has a unique solution $(\tilde{f},p) \in \mathbf{H}^{m+2}(\mathcal{D})\times \mathbf{H}^{m+1}(\mathcal{D}),$ with $p$ unique up to a constant, satisfying 
\begin{equation}
\|\tilde{f}\|_{H^{m+2}}\leq C\|f\|_{H^{m}}.\label{Stokes1ineq}
\end{equation}
In addition, by the definition of the inner product \eqref{innerV}, we get
\begin{equation}
(\tilde{f},g)_{V}=(f,g), \phantom{xx} \text{for any} \phantom{x} g\in V. \label{Stokes1eq}
\end{equation}

Furthermore, we state another result on the existence and uniqueness of a solution to the Dirichlet problem \eqref{Stokes1} that will play a crucial role in the proof of uniform estimates (see Lemma \ref{lemma.4}). The following Lemma is a particular case of \cite[Theorem 7]{kim2009existence}. Its proof can be performed using slight adaptations in the proof of the referred theorem.   
\begin{lem}
Let $\mathcal{D}$ be a bounded open domain of $\mathbb{R}^{d}$, $d=2,3$, with sufficiently regular boundary $\partial\mathcal{D}$. Then, for every $f \in (V\cap \mathbf{H}^{2})^{*},$ there exists a unique very weak solution $\tilde{f} \in \mathbf{L}^{2}(\mathcal{D})$ to the Stokes problem \eqref{Stokes1}. Moreover, we have for a positive constant $ C $
\begin{equation}
\|\tilde{f}\|_{2} \leq C \|f\|_{(V \cap H^{2})^{*}}. \label{Stokes2ineq}
\end{equation}
\end{lem}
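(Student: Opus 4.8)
The plan is to establish the very weak solvability of the Stokes problem \eqref{Stokes1} by a duality argument built upon the classical well-posedness already recorded in \eqref{Stokes1ineq}. First I would fix $f \in (V\cap\mathbf{H}^2)^*$ and, for each $g \in V\cap\mathbf{H}^2$, solve the ``adjoint'' problem \eqref{Stokes1} with right-hand side $g$; by \eqref{Stokes1ineq} with $m=2$ (and the lower-order cases $m=0,1$) this produces a unique $\tilde{g}\in\mathbf{H}^4(\mathcal{D})\cap V$ with $\|\tilde g\|_{H^{2}}\le C\|g\|_{2}$, and in fact $\|\tilde g\|_{H^{4}}\le C\|g\|_{H^{2}}$. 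Note the self-adjointness of the operator $f\mapsto\tilde f$ with respect to $(\cdot,\cdot)$, which follows from \eqref{Stokes1eq}: for smooth data, $(f_1,\tilde f_2)=(\tilde f_1,\tilde f_2)_V=(\tilde f_1,f_2)$.

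Next I would define the candidate solution by the formula $\langle \tilde f, g\rangle := \langle f, \tilde g\rangle_{(V\cap H^2)^*,\,V\cap H^2}$ for all $g$ in a suitable dense subspace of $\mathbf{L}^2(\mathcal{D})$ (for instance $g\in\mathcal{V}$, so that $\tilde g$ is a genuine test object). The bound \eqref{Stokes1ineq} gives $|\langle f,\tilde g\rangle|\le \|f\|_{(V\cap H^2)^*}\|\tilde g\|_{V\cap H^2}\le C\|f\|_{(V\cap H^2)^*}\|g\|_{2}$, so this linear functional extends by density and the Riesz representation theorem to a unique element $\tilde f\in\mathbf{L}^2(\mathcal{D})$ (in fact in $H$), satisfying the estimate \eqref{Stokes2ineq}. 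One then checks that $\tilde f$ solves \eqref{Stokes1} in the very weak (distributional, transposition) sense: testing the equation against $\tilde g$ and integrating by parts twice moves the operator $I-\alpha_1\Delta$ onto the test function, the pressure term drops because $\operatorname{div}\tilde g=0$, and the boundary condition $\tilde f=0$ is encoded weakly through the vanishing of the corresponding boundary integrals for $\tilde g\in V\cap\mathbf{H}^2$; this is precisely the transposition formulation. Uniqueness is immediate: if $\tilde f$ is very weak with zero data then $\langle\tilde f,g\rangle=0$ for all $g$ in the dense set, hence $\tilde f=0$.

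The delicate point — and the reason this is not entirely formal — is the treatment of the boundary: one must verify that the transposition identity genuinely sees the Dirichlet condition, i.e.\ that the class of test functions $\tilde g$ arising from solving the adjoint problem is rich enough (equivalently, that the normal-derivative boundary integrals produced by the double integration by parts are the only obstruction and they vanish for $\tilde g\in V\cap\mathbf{H}^2$ with $\tilde g|_{\partial\mathcal D}=0$). This, together with a careful bookkeeping of which dual norm $(V\cap\mathbf{H}^2)^*$ is the exact one making the pairing continuous, is where the cited \cite[Theorem 7]{kim2009existence} does the real work; the proof here amounts to observing that the argument there for the scalar/Navier-Stokes Stokes operator transfers verbatim to the modified operator $I-\alpha_1\Delta$ since the only structural ingredient used is the $\mathbf{H}^{m+2}$ regularity estimate \eqref{Stokes1ineq}, which we have. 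I would therefore present the proof as: (i) recall self-adjointness and higher regularity of the solution map from \eqref{Stokes1ineq}; (ii) define $\tilde f$ by transposition and bound it via Riesz; (iii) verify the very weak formulation including the boundary condition; (iv) conclude uniqueness — and remark that each step is the natural adaptation of \cite{kim2009existence} to the present operator.
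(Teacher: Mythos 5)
Your transposition argument is correct and is essentially the intended proof: the paper itself offers no argument for this lemma beyond citing \cite[Theorem 7]{kim2009existence} and asserting that its proof adapts with minor changes, and the duality scheme you lay out (solve the adjoint problem via \eqref{Stokes1ineq}, define $\tilde f$ by $(\tilde f,g):=\langle f,\tilde g\rangle$, bound it with Riesz, verify the very weak formulation and uniqueness) is exactly that standard adaptation to the operator $I-\alpha_1\Delta$. The only point worth tightening is the choice of test class: since the solution map $g\mapsto\tilde g$ is defined on all of $\mathbf{L}^2(\mathcal{D})$ by \eqref{Stokes1ineq} with $m=0$, you can define the functional on all of $\mathbf{L}^2$ directly rather than extending from $\mathcal{V}$ (whose $L^2$-closure is only $H$), which also makes the uniqueness step cleaner.
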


\subsection{Stochastic framework} \label{Subsection2.2}
Recall the complete probability space $(\Omega,\mathcal{F},\mathbb{P})$ and set $\Omega_{T}:=\Omega \times [0,T]$. Denote by $\mathbb{E}$ the mathematical expectation under the probability measure $\mathbb{P}$ and by $\mathcal{P}_{T}$ the predictable $\sigma$-algebra $i.e.$ the $\sigma$-algebra generated by $\{\mathcal{F}_{t}\}$-adapted and left continuous stochastic processes $X:\Omega_{T}\to \mathbf{E}$. Recall that the process $X$ is said to be $\mathbf{E}$-predictable if it is $\mathcal{P}_{T}/\mathfrak{B}(\mathbf{E})$-measurable.\vspace{0.2cm}

Let $L^{p}(\Omega, L^{m}(0,T;\mathbf{E}))$, $p,m \geq 1$, be the space of $\mathbf{E}$-predictable processes $U = U(\omega,t)$ defined on $\Omega$ with values in $L^m(0,T,\mathbf{E})$, and  endowed with the norms
\begin{equation*}
\| U\|_{L^{p}(\Omega,L^{m}(0,T;E))}=\left(\mathbb{E}\left(\int_{0}^{T}\| U\|_{E}^{m}dt\right)^{\frac{p}{m}}\right)^{\frac{1}{p}}
\end{equation*}
and if $m = \infty$,
\begin{equation*}
\|U\|_{L^{p}(\Omega,L^{\infty}(0,T;E))}=\left(\mathbb{E}\sup_{t\in [0,T]}\|U\|_{E}^{p} \right)^{\frac{1}{p}}.
\end{equation*}
In addition, let $L^{p}(\Omega,L^{\infty}_{w}(0,T;\mathbf{E})),$ $p\geq 1$, denote the space of $\mathbf{E}$-predictable and weakly-* measurable processes $U$ defined on $\Omega$ with values in $L^{\infty}(0,T;\mathbf{E})$, satisfying $ \mathbb{E}\|U\|_{L^{\infty}(0,T;E)}^{p} < \infty $.

\subsubsection{The Wiener process}
Let $\mathcal{U}_{0}$ be a separable Hilbert space, and $\lbrace e_{k}\rbrace_{k\in \mathbb{N}^{+}}$ be an orthonormal basis of $\mathcal{U}_{0}$. Consider an $\{\mathcal{F}_{t}\}$-cylindrical Wiener process $\mathcal{B}(t),$ $t \in [0,T],$ with values in $\mathcal{U}_{0},$ defined as follows 
\begin{equation}
\mathcal{B}(t) = \sum_{k \in \mathbb{N}^{+}}\beta_{k}(t)e_{k}, \quad t \in [0,T] \label{InfiniteB}
\end{equation}
where $(\beta_{k}(t))_{k\in \mathbb{N}^{+}}$ is a sequence of independent real valued $\{\mathcal{F}_{t}\}$-standard Wiener processes. The series does not converge in $\mathcal{U}_{0}.$ However, we still can easily represent the stochastic process $\mathcal{B}$ defined in \eqref{InfiniteB} as a $Q$-Wiener process with values in a complete larger Hilbert space. We write
\begin{equation}
\mathcal{B}(t) = \sum_{k \in \mathbb{N}^{+}}\beta_{k}(t)e_{k}= \sum_{k \in \mathbb{N}^{+}}\sqrt{\lambda_{k}}\beta_{k}(t)g_{k}, \quad t \in [0,T]
\end{equation}
thus, $\mathcal{B}(t),$ $t \in [0,T],$ is a $Q$-Wiener process with covariance $Q$ (a symmetric, trace class non-negative operator), such that $Qg_{k}=\lambda_{k}g_{k},$ $\lambda_{k}> 0,$ $k \in \mathbb{N}^{+},$ where $(\lambda_{k}= k^{-2})_{k\in \mathbb{N}^{+}}$ is a sequence of eigenvalues of $Q$ corresponding to the eigenvectors $\lbrace g_{k} = ke_{k}\rbrace_{k \in \mathbb{N}^{+}}$. The set $\lbrace g_{k}\rbrace_{k\in \mathbb{N}^{+}}$ forms an orthonormal basis of the larger space, given by $\mathcal{U}_{1}:= Q^{-1/2}(\mathcal{U}_{0}),$ that defines a Hilbert space when endowed with the inner product 
\begin{equation*}
(g,h)_{\mathcal{U}_{1}}:=(Q^{1/2}g,Q^{1/2}h)_{\mathcal{U}_{0}}, \quad \forall g,h \in \mathcal{U}_{1}.
\end{equation*}

Therefore the series in \eqref{InfiniteB} converges in $L^{2}(\Omega,C([0,T],\mathcal{U}_{1}))$, and $\mathbb{P}$-a.s., the trajectories of $\mathcal{B}(t),$ $t \in [0,T],$ are in the space $C([0,T],\mathcal{U}_{1}).$ 
\subsubsection{The diffusion coefficient and the stochastic integral}
We give appropriate assumptions on the diffusion coefficient that will allow us to define the Itô integral.

Given the separable Hilbert space $\mathbf{L}^{2}(\mathcal{D}),$ endowed with the norm $\|\cdot\|_{2}$, we consider the space of all Hilbert-Schmidt operators from $\mathcal{U}_{0}$ into $\mathbf{L}^{2}(\mathcal{D})$ defined as follows
\begin{equation*}
L_2^0:= L_{2}(\mathcal{U}_{0},\mathbf{L}^{2}(\mathcal{D})):= \lbrace \sigma : \mathcal{U}_{0} \rightarrow \mathbf{L}^{2}(\mathcal{D}): \|\sigma\|_{L_{2}(\mathcal{U}_{0},\mathbf{L}^{2}(\mathcal{D}))}^{2}= \sum_{k=1}^{\infty} \|\sigma e_{k}\|_{2}^{2}< \infty\rbrace.
\end{equation*}
The latter is a separable Hilbert space endowed with the norm $\|\cdot\|_{L_{2}^{0}}:= \|\cdot\|_{L_{2}(\mathcal{U}_{0},\mathbf{L}^{2}(\mathcal{D}))}^{2}.$ 

Let $\sigma : \Omega_{T} \rightarrow L_{2}^{0}$ be a measurable mapping from $(\Omega_T, \mathcal{P}_{T})$ into $(L_{2}^{0},\mathfrak{B}(L_{2}^{0})),$ that satisfies the following
\begin{equation*}
\mathbb{E}\int_{0}^{T}\|\sigma(t)\|^{2}_{L_{2}^{0}}dt< \infty.
\end{equation*} 
Namely, let us consider an $L_{2}^{0}$-predictable process $\sigma \in L^{2}(\Omega_{T};L_{2}^{0})$ and set $\sigma^{k}=\sigma e_{k}$, then the stochastic integral with respect to the cylindrical Wiener process $\mathcal{B}$ given by
\begin{equation*}
\int_{0}^{t}\sigma(s) d\mathcal{B}(s) = \sum_{k=1}^{\infty}\int_{0}^{t}\sigma^{k}(s) d\beta_{k}(s), \quad \forall t \in [0,T]
\end{equation*}
is well defined. 
\begin{hyp}
\label{Hyp2}
Consider a family of functions 
$\sigma^k : (t,u)\in[0,T]\times V \to \mathbf{L}^2(\mathcal{D})\ni \sigma^k(t,u),$ $k \in \mathbb{N}^+,$
where, $t\mapsto \sigma^k(t, u),$ $k\in \mathbb{N}^+,$ is measurable for all $u \in V,$ and there exist two constants $\kappa, \ell> 0$, such that  
\begin{align}
&\sum_{k=1}^{\infty}\|\sigma^{k}(\cdot,u)-\sigma^{k}(\cdot,y)\|_{2}^{2}\leq \kappa \|u-y\|_{2}^{2}, \quad \forall u,y\in V, \phantom{x} a.e. \phantom{x}t\in [0,T],\label{lipschitz2}
\\ &
\sum_{k=1}^{\infty}\|\sigma^{k}(\cdot,u)\|_{2}^{2}\leq \ell(1+\|u\|_{V}^{2}), \quad \forall u \in V, \phantom{x} a.e. \phantom{x}t\in [0,T].\label{growth2}
\end{align}
\end{hyp}
Now, let us define for all $t \in [0,T]$ and $u \in V$, the linear mapping $\sigma(t,u) : \mathcal{U}_{0}\rightarrow \mathbf{L}^{2}(\mathcal{D}),$ as follows $\sigma (t,u) e_{k}=\sigma^{k}(t,u),$ for all $k \in \mathbb{N}^{+}.$
From the above hypotheses, we deduce that the diffusion coefficient $\sigma$ is $\kappa$-Lipschitz continuous in $u$, uniformly in $t$, and satisfies a linear growth condition.
Therefore, for any $t\in [0,T]$, and $u \in V$, the process $\sigma(t,u)$ is a Hilbert-Schmidt operator from $\mathcal{U}_{0}$ into $\mathbf{L}^{2}(\mathcal{D})$, that is
\begin{equation*}
\sigma : [0,T]\times V \rightarrow L_{2}(\mathcal{U}_{0},\mathbf{L}^{2}(\mathcal{D})).
\end{equation*}
Moreover, for any $V$-predictable process $U \in L^{2}(\Omega_T;V)$, the diffusion coefficient $\sigma(\cdot,U)$ is $L_{2}^{0}$-predictable, and $\sigma \in L^{2}(\Omega_{T}\times V; L_{2}^{0}).$ Taking into account the above discussion, we infer that the stochastic integral
\begin{equation}
\int_{0}^{t}\sigma(s,U)d\mathcal{B}(s) = \sum_{k=1}^{\infty}\int_{0}^{t}\sigma^{k}(s,U)d\beta_{k}(s), \quad \forall t\in [0,T]
\end{equation}
is well defined and it is a continuous square integrable $\mathbf{L}^{2}(\mathcal{D})$-valued martingale with respect to $\{\mathcal{F}_{t}\}_{t \in [0,T]}$. We refer to \cite{da2014stochastic, liu2015stochastic}, for more details about the above setting.\vspace{0.2cm}

We close this section by introducing the notion of weak probabilistic solution, also called martingale solution.
\begin{defi}
\label{weak.sol}
Assume the Hypotheses \ref{Hyp1}-\ref{Hyp2}. A system 
$(\widetilde{\Omega},\widetilde{\mathcal{F}}, \widetilde{\mathbb{P}},\{\widetilde{\mathcal{F}}_t\}_{t\in[0,T]}, \widetilde{\mathcal{B}}, \widetilde{U})$
is said to be a stochastic weak (or martingale) solution of the  system \eqref{syst} if 
\begin{enumerate}
\item $\mathcal{L}(\widetilde{U}(0))=\mathcal{L}(U_{0}),$ where $\mathcal{L}(\cdot)$ stands for the law of a random variable,
\item $(\widetilde{\Omega},\widetilde{\mathcal{F}}, \widetilde{\mathbb{P}})$ is a complete probability space,
\item  $\widetilde{\mathcal{B}}$ is a Wiener process defined on the probability space $(\widetilde{\Omega},\widetilde{\mathcal{F}}, \widetilde{\mathbb{P}})$ for the filtration $\{\widetilde{\mathcal{F}}_t\}_{t\in[0,T]}$,
\item $\widetilde{U}$ is $V$-predictable and belongs to the space $$L^{p}(\widetilde{\Omega},C([0,T],H))\cap L^{p}(\widetilde{\Omega} ,L^{\infty}_{w}(0,T;V))\cap L^{4}(\widetilde{\Omega}, L^{4}(0,T; \mathbf{W}^{1,4}(\mathcal{D}))),$$
\item the following equation holds $\widetilde{\mathbb{P}}-$a.s., for all $t\in [0,T]$
\begin{align}
\notag (\widetilde{U}(t),\varphi-\alpha_{1}\Delta\varphi) &= (\widetilde{U}_{0},\varphi-\alpha_{1}\Delta\varphi) -\int_{0}^{t}\int_{\mathcal{D}}(\widetilde{U}\cdot \nabla \widetilde{U})\cdot \varphi dxds +\int_{0}^{t}\int_{\mathcal{D}} \alpha_{1}\sum_{i,j,l=1}^{d}\widetilde{U}^{i}\widetilde{A}^{lj}\cdot \partial_{i}\partial_{j}\varphi^{l} dxds\\ \notag& 
-\int_{0}^{t}\int_{\mathcal{D}}\left(\mu \nabla \widetilde{U} + \alpha_{1}((\nabla \widetilde{U})^{T}\widetilde{A} + \widetilde{A}(\nabla \widetilde{U}))+\beta
(\vert \widetilde{A}\vert^{2}\widetilde{A})+\alpha_{2}\widetilde{A}^{2}\right)\cdot \nabla \varphi dxds
\notag\\
&
+\int_{0}^{t}\left( \Phi(s,\widetilde{U}(s)),\varphi \right)\,ds+\int_{0}^{t}\left(  \sigma(s,\widetilde{U}(s)),\varphi \right) d\widetilde{\mathcal{B}}(s), 
\label{weak.stoch}
\end{align}
for all $\varphi \in \mathcal{V}$, where $\widetilde{A}:=A(\widetilde{U}).$ 
\end{enumerate}
\end{defi}

\begin{center}
\section{Existence of stochastic weak solutions}\label{section3}
\end{center}
\setcounter{equation}{0}
The main result of this section is given as follows.
\begin{thm}
\label{main.result}
Assume that Hypotheses \ref{Hyp1}-\ref{Hyp2} hold. Then, there exists a martingale solution $$(\widetilde{\Omega}, \widetilde{\mathcal{F}},\widetilde{\mathbb{P}}, \lbrace\widetilde{\mathcal{F}_{t}}\rbrace_{t \in [0,T]}, \widetilde{\mathcal{B}},\widetilde{U})$$ to the system \eqref{syst} according to Definition \ref{weak.sol}.
\end{thm}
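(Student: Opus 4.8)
The plan is to implement a Galerkin approximation combined with a cut-off truncation, derive uniform a priori bounds, pass through a tightness/Prokhorov/Skorokhod construction on a new probability space, and finally identify the limit using the monotonicity of the higher-order nonlinear operator. Concretely, I would first choose a smooth orthonormal basis of $V$ (e.g.\ eigenfunctions of the modified Stokes operator $I-\alpha_1\Delta$, so that the basis is orthogonal both in $H$ and in $V$) and project the equation \eqref{syst} onto the span $V_n$ of the first $n$ modes. Because the nonlinearities carry derivatives of higher order than the dissipation (the terms $\alpha_1\mathrm{div}(U\cdot\nabla A + (\nabla U)^T A + A(\nabla U))$ are cubic with two derivatives), the Galerkin system may not be globally solvable a priori; I would therefore insert a cut-off function $\theta_R(\|U_n\|_V)$ multiplying the troublesome nonlinear terms, solve the resulting finite-dimensional SDE up to a stopping time, and later remove $R\to\infty$ using the a priori estimate.

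The key a priori estimate comes from applying It\^o's formula to $\|U_n\|_V^2 = (U_n, U_n-\alpha_1\Delta U_n)$, i.e.\ testing with $U_n$ itself. The convective term $b(U_n,U_n,U_n)$ vanishes by \eqref{trilin2}; the crucial point is that, after integration by parts against $U_n$, the combination of the $\mu\Delta U_n$ term, the $\alpha_1$ Rivlin--Ericksen terms, the $\alpha_2\mathrm{div}(A^2)$ term and the $\beta\,\mathrm{div}(|A|^2A)$ term produces, by the algebraic identity behind condition \eqref{RESTMON} (the discriminant-type inequality $3\alpha_1^2+4(\alpha_1+\alpha_2)^2\le 24\mu\beta$), a coercive contribution controlling $\beta\|A(U_n)\|_4^4$ plus $\mu\|\nabla U_n\|_2^2$, up to lower-order terms absorbed by Gronwall; this is exactly the mechanism from Paicu \cite{paicu2008global} and Busuioc--Iftime \cite{busuioc2008steady}. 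Using the growth hypotheses \eqref{growth1}, \eqref{growth2}, the Burkholder--Davis--Gundy inequality and Gronwall, I obtain bounds for $U_n$ in $L^p(\Omega,L^\infty(0,T;V))$ and in $L^4(\Omega,L^4(0,T;\mathbf{W}^{1,4}))$ (via Lemma with constant $C_*$), uniformly in $n$ (and in $R$, allowing $R\to\infty$). Next I would estimate fractional time regularity: rewriting the equation as $\widetilde U_n(t)-\alpha_1\Delta\widetilde U_n(t) = (\cdots)$ and inverting the Stokes operator using \eqref{Stokes1ineq}/\eqref{Stokes2ineq}, the drift lies in $L^q(0,T;(V\cap\mathbf H^2)^*)$ and the stochastic convolution lies in $W^{\gamma,q}(0,T;H)$ for suitable $\gamma<1/2$, giving a uniform bound on $U_n$ in $W^{\gamma,q}(0,T;(V\cap\mathbf H^2)^*)$ or in some $\mathcal C^{0,\alpha}([0,T],(V\cap\mathbf H^2)^*)$.

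With these bounds, Lemma~\ref{C.2} (the Simon-type compact embedding) shows that the laws of $U_n$ are tight on $C([0,T],H_w)$ (or on $C([0,T],\mathbf{E})$ for an intermediate space $\mathbf{E}$ with $V\hookrightarrow\hookrightarrow\mathbf{E}\hookrightarrow(V\cap\mathbf H^2)^*$), and tight on $L^4(0,T;\mathbf{W}^{1,4})$ with the weak topology and on $L^2(0,T;V)$ weak; together with tightness of the Wiener process, Prokhorov gives a weakly convergent subsequence of joint laws, and Skorokhod's representation theorem produces a new stochastic basis $(\widetilde\Omega,\widetilde{\mathcal F},\widetilde{\mathbb P},\{\widetilde{\mathcal F}_t\})$, a Wiener process $\widetilde{\mathcal B}$ and random variables $\widetilde U_n\to\widetilde U$ almost surely in the relevant topologies, with the same laws. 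Passing to the limit is routine for the linear terms and for the convective term $U_n\cdot\nabla U_n$ (strong $L^2$ convergence in space plus the weak-in-time bounds suffice), as well as for $\Phi$ and $\sigma$ by their Lipschitz continuity. The \textbf{main obstacle} is identifying the weak limits of the higher-order cubic terms $\beta|A(U_n)|^2A(U_n)$ and the $\alpha_1$, $\alpha_2$ quadratic-in-$\nabla U$ terms: strong $\mathbf W^{1,4}$-convergence is not available, only weak. To close this I would bundle precisely these terms into a single nonlinear operator $\mathcal N(U)$ which, under \eqref{RESTMON}, is monotone (hemicontinuous, coercive) in the sense of Minty--Browder on $\mathbf W^{1,4}_{\mathrm{div}}$; then, using the $H^1$ energy equality satisfied by $\widetilde U$ on the new space (obtained by passing to the limit in the It\^o energy identity and a lower-semicontinuity argument, being careful with the stochastic term via the convergence of quadratic variations), together with $\limsup$ control of $\langle \mathcal N(\widetilde U_n),\widetilde U_n\rangle$, the standard Minty trick identifies the weak limit of $\mathcal N(\widetilde U_n)$ with $\mathcal N(\widetilde U)$. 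Finally, a martingale representation / identification argument shows that $\widetilde U$ together with $\widetilde{\mathcal B}$ solves \eqref{weak.stoch} for all test functions $\varphi\in\mathcal V$, and the density of $V_n$ in $V$ plus the uniform bounds give membership in the solution class of Definition~\ref{weak.sol}, completing the proof.
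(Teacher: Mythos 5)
Your proposal is correct and follows essentially the same route as the paper: Galerkin projection with a cut-off removed via stopping times, Itô's formula on $\|U_n\|_V^2$ yielding the $L^p(\Omega,L^\infty(0,T;V))$ and $L^4(\Omega,L^4(0,T;\mathbf{W}^{1,4}))$ bounds, time regularity obtained by inverting the modified Stokes operator, tightness/Prokhorov/Skorokhod, and a Minty--Browder argument for the bundled higher-order operator using the energy identity and lower semicontinuity. The only cosmetic difference is that the paper's a priori estimate absorbs the $\alpha_1,\alpha_2$ terms by Young's inequality against $\beta\|A\|_4^4$ without invoking \eqref{RESTMON}, reserving that condition solely for the monotonicity of the operator $\mathcal{Q}$.
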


As mentioned above, to prove Theorem \ref{main.result}, we first apply the finite-dimensional Galerkin approximations method and use cut-off techniques for non-linear terms in order to establish both the existence of a global solution to the approximated system and a priori estimates.

\subsection{Approximations and a priori estimates}
The embedding $V \hookrightarrow H$ being compact, then there exists a basis $\lbrace \text{v}_{j}\rbrace_{j\in \mathbb{N}} \subset V$ such that
\begin{equation}
(u,\text{v}_{j})_{V}=\lambda_{j}(u,\text{v}_{j}), \qquad \forall u \in V, \quad j \in \mathbb{N}.\label{basis}
\end{equation}
The sequence of eigenvectors $\lbrace \text{v}_{j}\rbrace_{j\in \mathbb{N}}$ is an orthonormal basis in $H$ and an orthogonal basis in $V$, and the corresponding sequence of eigenvalues $( \lambda_{j})_{j\in \mathbb{N}}$ satisfies $\lambda_{j}>0, \forall j \in \mathbb{N},$ \text{and} $\lambda_{j}\to \infty$ \text{as} $j \to \infty.$ Note that the problem \eqref{basis} is elliptic and its solution is $C^{\infty}$-regular, therefore we can assume $H^{4}(\mathcal{D})$ regularity $i.e.$ $\text{v}_{j} \subset \mathbf{H}^{4}(\mathcal{D})$, $j\in \mathbb{N}$, when needed, see $e.g.$ \cite{busuioc2003second}.

Consider the finite dimensional space $V_{n} = \mathrm{span}\lbrace \text{v}_{1},\dots,\text{v}_{n}\rbrace$ 
and set $W:=\mathbf{H}^{4}(\mathcal{D}) \cap V$. Define the operator $P_n$ by
\begin{equation*}
P_n : W^{*} \to V_n; \quad u \mapsto P_n u = \sum_{j=1}^{n} (u,\text{v}_{j})_{W^{*},W}\text{v}_{j}.
\end{equation*}
\begin{rmq}
\label{lem-proj-H}
The restriction	of $P_n$ to $H$,	denoted similarly, is	the	$(\cdot,\cdot)$-orthogonal projection given by
\begin{equation*}
P_n : H\to V_n; \quad u\mapsto P_n u=\sum_{j=1}^n (	u,\text{v}_j) \text{v}_j.
\end{equation*}
It is clear	to notice that $\|P_n u\|_2 \leq \|u\|_2$	 for any	$u\in H,$ and $P_n$ is self-adjoint.
\end{rmq}

Let $N>0$ and consider a family of smooth functions $\iota_N:[0,\infty) \to [0,1],$ satisfying
\begin{align}\label{cut-function}
\iota_{N}(x)=\begin{cases}
1, \quad 0\leq x\leq N,\\[0.15cm]
0, \quad  2N \leq x.	\end{cases}
\end{align}
Moreover, let us denote by $\phi_N$ the family of functions defined on $W^*$, given as follows 
\begin{equation}
\phi_N(u)=\iota_N (\|u\|_{W^*}), \quad \forall u \in  W^*. \label{cutoff}
\end{equation}

Now, we write the Galerkin approximations of system \eqref{syst}
\begin{equation}
\left\{
\begin{array}{l}
		d\left( U_n -\alpha_{1}P_n\Delta U_{n},\varphi \right) =\bigl( \mu P_n\Delta U_{n}-P_n U_{n}\cdot \nabla U_n + \alpha_1P_n\mathrm{div}\left((\nabla U_{n})^{T}A_n + A_{n}(\nabla U_{n})\right)\vspace{2mm} \\
		 
		\qquad\qquad\qquad\qquad +\alpha_{1}P_n\mathrm{div}(U_{n}\cdot\nabla A_{n})+\alpha_{2}P_n\mathrm{div}(A_{n}^{2}) + \beta P_n{\rm div}\left(|A_n|^2A_n\right)
\vspace{2mm} \\
		\qquad\qquad\qquad\qquad+P_n\Phi(\cdot,U_{n}), \varphi \bigr)dt
		+\left( P_n\sigma(\cdot,U_{n}),\varphi \right) \,d\mathcal{B}(t),\qquad  \forall \varphi \in	V_{n},
		\vspace{2mm} \\
		U_{n}(0)=U_{n,0},
	\end{array}
	\right. \label{Galerkin.syst}
\end{equation}
where $A_{n}=\nabla U_{n} + \nabla U_{n}^{T},$ $U_{n,0}$ denotes the projection of the initial condition $U_{0}$ onto the space $V_{n}$ and $U_{n}$ is uniquely given by the linear combinations of the basis $\lbrace \text{v}_{j}\rbrace_{j\in \mathbb{N}}$, $U_{n}(t)=\sum_{j=1}^{n}c_{j}^{n}(t)\text{v}_{j}.$

From the elliptic problem \eqref{basis}, we can derive an orthonormal basis for $V$, given as follows $\lbrace \hat{\text{v}}_{j} = \frac{1}{\sqrt{\lambda}_{j}}\text{v}_{j}\rbrace_{j\in \mathbb{N}},$ and
$U_{n,0}=\sum_{j=1}^{n}\left(U_{0},\text{v}_{j}\right)\text{v}_{j}=\sum_{j=1}^{n}\left( U_{0},\hat{\text{v}}_{j}\right) _{V}\hat{\text{v}}_{j}.$
Therefore, by Bessel's inequality, we find
\begin{equation*}
	\left\Vert U_{n}(0)\right\Vert_{2}\leq \left\Vert U_{0}\right\Vert_{2}\quad \text{and}\quad \left\Vert U_{n}(0)
	\right\Vert _{V}\leq \left\Vert U_{0}\right\Vert _{V}.  
\end{equation*}
Moreover, the dominated convergence theorem yields the following
\begin{equation*}
U_{n}(0) \rightarrow U_{0} \phantom{x} \text{in} \phantom{x} L^{p}(\Omega,V).
\end{equation*}

Our next aim is to show the existence of a global-in-time solution of system \eqref{Galerkin.syst} and to prove the first estimates.

\begin{lem}
Assume that $U_0\in L^{p}(\Omega,V)$ and the hypotheses \eqref{lipschitz1}, \eqref{growth1}, \eqref{lipschitz2}, \eqref{growth2} hold. Then, the system \eqref{Galerkin.syst} admits a unique solution $U_{n}\in L^{2}(\Omega,C([0,T],V_n))$, that verifies the estimates
\begin{align}
&\mathbb{E}\sup_{s\in [0, t]}\|U_{n}(s)\|_{V}^{2}+\dfrac{\beta}{2}\mathbb{E}\int_{0}^{t}\|A_{n}\|_{4}^{4}ds + 2\mu \mathbb{E}\int_{0}^{t}\|\nabla U_{n}\|_{2}^{2}ds\leq C (1 + \mathbb{E}\|U_0\|_{V}^{2}), \qquad \forall t \in [0,T], \label{E.v}
\\ &
\mathbb{E}\|U_{n}\|_{L^{4}(0,t;W^{1,4})}^{4} \leq C (1 + \mathbb{E}\|U_0\|_{V}^{2}), \qquad\forall t \in [0,T], \label{E.14}
\end{align}
where $C$ is a positive constant independent of $n$. \label{estimate3}
\end{lem}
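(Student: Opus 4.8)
## Proof Proposal

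\textbf{Overall strategy.} The plan is to establish existence for the truncated (cut-off) Galerkin system by a fixed-point/ODE argument, then derive the a priori estimates by applying Itô's formula to $\|U_n\|_V^2$ and exploiting the monotonicity-flavored algebraic identity governed by the restriction \eqref{RESTMON}. A crucial preliminary observation is that testing \eqref{Galerkin.syst} with $\varphi = U_n$ and using \eqref{innerV} turns the left-hand side into $\tfrac12 d\|U_n\|_V^2$ (up to the Itô correction), since $(U_n - \alpha_1 P_n\Delta U_n, U_n) = (U_n, U_n)_V$ because $U_n \in V_n$ and $P_n$ is self-adjoint on $H$ (Remark \ref{lem-proj-H}).

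\textbf{Step 1: Local existence for the truncated system.} First I would introduce the cut-off: replace each nonlinear term $F(U_n)$ in \eqref{Galerkin.syst} by $\phi_N(\|U_n\|_{W^*}) F(U_n)$ (following the description in the introduction about cut-off techniques), so that the drift becomes globally Lipschitz on $V_n$ — here one uses that on the finite-dimensional space $V_n$ all norms are equivalent, so the cubic term $|A_n|^2 A_n$ and the quadratic terms are locally Lipschitz, and multiplication by $\phi_N$ makes them globally Lipschitz with linear growth. The diffusion coefficient $P_n\sigma(\cdot, U_n)$ is globally Lipschitz and of linear growth by \eqref{lipschitz2}–\eqref{growth2}. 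Writing $U_n(t) = \sum_j c_j^n(t)\mathrm{v}_j$, the system \eqref{Galerkin.syst} becomes a finite-dimensional SDE for $(c_j^n)_{j=1}^n$ with globally Lipschitz, linearly growing coefficients (the matrix $\left((\mathrm{v}_i - \alpha_1 P_n\Delta\mathrm{v}_i, \mathrm{v}_j)\right)_{i,j}$ is invertible since it equals $(\lambda_j \delta_{ij})$ in the $\{\hat{\mathrm{v}}_j\}$ basis, so one can solve for the derivatives), hence by the standard theory of SDEs it has a unique global solution in $L^2(\Omega, C([0,T], V_n))$.

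\textbf{Step 2: A priori estimate in $V$ via Itô's formula.} Next I would apply Itô's formula to the function $u \mapsto \|u\|_V^2$ for the process $U_n$. The drift contribution from $\mu P_n\Delta U_n$ tested against $U_n$ gives $-2\mu\|\nabla U_n\|_2^2$; the convective term $b(U_n, U_n, U_n) = 0$ by \eqref{trilin2}; the stochastic term's quadratic variation yields $\sum_k \|P_n\sigma^k(\cdot,U_n)\|_2^2 \le \sum_k \|\sigma^k(\cdot,U_n)\|_2^2 \le \ell(1+\|U_n\|_V^2)$ by \eqref{growth2} and the contraction property of $P_n$. The delicate part is the sum of the three $\alpha_1$-terms, the $\alpha_2$-term, and the $\beta$-term; after integration by parts (and noting these are local operators, so $P_n$ drops out against $U_n \in V_n$ after pairing), one gets an expression of the form
$$\int_{\mathcal{D}}\Big(\alpha_1(\text{terms with }A_n, \nabla U_n) + \alpha_2 A_n^2 : \nabla U_n + \beta|A_n|^2 A_n : \nabla U_n\Big)\,dx.$$
Using $\nabla U_n : M = \tfrac12 A_n : M$ for symmetric $M$, rewriting everything in terms of $A_n$, and applying Young's inequality carefully, the condition $3\alpha_1^2 + 4(\alpha_1+\alpha_2)^2 \le 24\mu\beta$ in \eqref{RESTMON} is exactly what guarantees that this combination is bounded above by $-\tfrac{\beta}{2}\|A_n\|_4^4$ plus a controllable remainder (possibly absorbing part into the $\mu\|\nabla U_n\|_2^2$ term); this is the standard monotonicity computation referenced from \cite{paicu2008global, busuioc2008steady}. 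The term involving $\Phi$ is handled by \eqref{growth1} and Young's inequality. Collecting terms, taking $\sup_{s\le t}$, applying the Burkholder–Davis–Gundy inequality to the martingale term (absorbing a small multiple of $\mathbb{E}\sup\|U_n\|_V^2$ on the left), and using $\|U_n(0)\|_V \le \|U_0\|_V$ with Gronwall's lemma, yields \eqref{E.v} with a constant independent of $n$ — note the cut-off $\phi_N$ only improves the estimate since $0 \le \phi_N \le 1$, and in fact \eqref{E.v} shows the estimate is uniform in $N$, a point exploited later to remove the truncation.

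\textbf{Step 3: The $W^{1,4}$ estimate.} Finally, \eqref{E.14} follows immediately from \eqref{E.v} together with Lemma \ref{KORN*}: $\|U_n\|_{W^{1,4}}^4 \le C_*^4\|A_n\|_4^4$, so $\mathbb{E}\int_0^T\|U_n\|_{W^{1,4}}^4\,dt \le C_*^4\,\mathbb{E}\int_0^T\|A_n\|_4^4\,dt \le C(1 + \mathbb{E}\|U_0\|_V^2)$ by the $\beta$-term already controlled in \eqref{E.v}.

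\textbf{Main obstacle.} The principal difficulty is Step 2 — specifically, showing that the higher-order nonlinear terms (which carry more derivatives than the viscous term) combine into a sign-definite quantity controlled by $\|A_n\|_4^4$. This requires the precise algebraic manipulation turning the $\alpha_1, \alpha_2, \beta$ contributions into a quadratic form in the entries of $A_n$ and invoking \eqref{RESTMON} to certify its coercivity; a secondary technical point is ensuring the cut-off truncation does not interfere with these sign considerations (it does not, since $0\le\phi_N\le1$), and that all constants emerging from Young's and Gronwall's inequalities remain independent of $n$ (and of $N$).
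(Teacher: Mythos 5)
Your overall architecture (cut-off Galerkin system solved as a globally Lipschitz finite-dimensional SDE, It\^o's formula for $\|U_n\|_V^2$ via the eigenbasis, Burkholder--Davis--Gundy, Gronwall, then Korn's inequality \eqref{KORN*} for \eqref{E.14}) is exactly the paper's, and Steps 1 and 3 are fine. The problems are concentrated in your Step 2, in the treatment of the higher-order nonlinearities.

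First, a genuine gap: the term $\alpha_{1}\mathrm{div}(U_{n}\cdot\nabla A_{n})$ cannot be folded into a generic expression in ``$A_n$ and $\nabla U_n$'' and then handled by Young's inequality, because after one integration by parts it still contains $\nabla A_n$, i.e.\ second derivatives of $U_n$, which are not controlled by $\|A_n\|_4^4$ or $\|U_n\|_V^2$. The only way this term is dealt with is the exact cancellation
$(\mathrm{div}(U_{n}\cdot\nabla A_{n}),U_{n})=-\tfrac12\int_{\mathcal{D}}(U_{n}\cdot\nabla A_{n})\cdot A_{n}\,dx=-\tfrac14\int_{\mathcal{D}}U_{n}\cdot\nabla|A_{n}|^{2}\,dx=0$,
using the symmetry of $A_n$ and $\mathrm{div}\,U_n=0$. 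Your proposal never states this identity, and note that the ``standard monotonicity computation'' you cite does not cover it either: the monotone operator $\mathcal{Q}$ in \eqref{monoton} deliberately excludes the transport term. Without this observation the estimate fails. Second, you misattribute the role of \eqref{RESTMON}: the paper does not use it for this lemma. It bounds the $\alpha_1$- and $\alpha_2$-contributions individually by $\tfrac{\varepsilon}{2}\|A_n\|_4^4+C\|A_n\|_2^2$ via Young's inequality, chooses $\varepsilon_1=\varepsilon_2=\beta/4$ so that the exact identity $(\beta\,\mathrm{div}(|A_n|^2A_n),U_n)=-\tfrac{\beta}{2}\|A_n\|_4^4$ absorbs them while keeping the full $2\mu\|\nabla U_n\|_2^2$, and feeds the remainder $\|A_n\|_2^2\lesssim\|U_n\|_V^2$ to Gronwall; only $\beta>0$ is needed. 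The restriction \eqref{RESTMON} is reserved for the monotonicity Lemma \ref{mon} used in the limit identification. Your route through the Fosdick--Rajagopal quadratic form could in principle be made to work, but as stated (``possibly absorbing part into the $\mu\|\nabla U_n\|_2^2$ term'') it would not deliver the stated coefficients $2\mu$ and $\beta/2$ in \eqref{E.v}, and it needlessly imports a hypothesis. Finally, the removal of the cut-off deserves more than a parenthesis: one needs the Chebyshev bound $N^{2}\,\mathbb{P}(\tau_{N}^{n}<T)\le\mathbb{E}\sup_{s\le\tau_N^n\wedge T}\|U_{n}^{N}(s)\|_{V}^{2}\le C$ uniformly in $N$, so that $\tau_{N}^{n}\to T$ and monotone convergence passes the estimate to the untruncated solution.
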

\vspace{0.1cm}
\begin{proof}
Recall \eqref{cutoff}, for fixed $n\in \mathbb{N}$, let us consider the following system
\begin{equation}
\left\{
\begin{array}{l}
d\left(U_n^N -\alpha_{1}P_n\Delta U_{n}^N,\varphi \right)\vspace{2mm}\\
\qquad=\bigl( \mu P_n\Delta U_{n}^N-P_n\phi_N(U_{n}^N)U_{n}^N\cdot \nabla U_n^N + \alpha_1P_n\phi_N(U_{n}^N)\mathrm{div}\left((\nabla U_{n}^N)^{T}A_n^N + A_{n}^N(\nabla U_{n}^N)\right)\vspace{2mm} \\
\qquad +\alpha_{1}P_n\phi_N(U_{n}^N)\mathrm{div}(U_{n}^N\cdot\nabla A_{n}^N)+\alpha_{2}P_n\phi_N(U_{n}^N)\mathrm{div}((A_{n}^N)^{2}) + \beta	P_n\phi_N(U_{n}^N){\rm div}\left(|A_n^N|^2A_n^N\right)
\vspace{2mm} \\
\qquad+P_n\Phi(\cdot,U_{n}^N), \varphi \bigr)dt
+\left( P_n\sigma(\cdot,U_{n}^N),\varphi \right) \,d\mathcal{B}_{t},\qquad  \forall \varphi \in	V_{n},
\vspace{2mm} \\
U_{n}^N(0)=U_{n,0},
\end{array}
\right. \label{Galerkin.syst2}
\end{equation}
Note that \eqref{Galerkin.syst2} defines a globally Lipschitz continuous system of stochastic ordinary differential equations. Hence, by using $e.g.$ the Banach fixed point theorem (\cite[Thm.	1.12]{roubivcek2013nonlinear}), we infer the existence of a unique predictable solution $U_n^N\in	L^2(\Omega,C([0,T],V_n)).$\vspace{0.1cm}

For each $n \in \mathbb{N}$, let us define the sequence $\lbrace\tau_{N}^{n}\rbrace_{N\in \mathbb{N}}$ of stopping times
\begin{equation*}
\tau_{N}^{n} = \inf\lbrace t\geq0, \|U_{n}^N\|_{V} \geq N \rbrace \wedge T, \quad N\in \mathbb{N}.
\end{equation*} 

Recall the equation \eqref{Galerkin.syst2} and take $\varphi :=\text{v}_{j},$ $j=1,...,n$. Set 
\begin{align}
f(U_{n}^N) & :=  \mu \Delta U_{n}^N + \phi_N(U^N_n)[- U_{n}^N\cdot \nabla U_{n}^N+\alpha_{1}\mathrm{div}((\nabla U_{n}^N)^{T}A_{n}^N+A_{n}^N(\nabla U_{n}^N))\notag\\
& + \alpha_{1}\mathrm{div}(U_{n}^N\cdot \nabla A_{n}^N) +\alpha_{2}\mathrm{div}((A_{n}^N)^{2})+\beta\mathrm{div}(\vert A_{n}^N\vert^{2}A_{n}^N)] + \Phi(\cdot,U_{n}^N). \label{fUn}
\end{align}
Then, we get
\begin{equation*}
d(U_{n}^N-\alpha_{1}\Delta U_{n}^N,\text{v}_{j})=(f(U_{n}^N),\text{v}_{j})dt + (\sigma(\cdot,U_{n}^N),\text{v}_{j})d\mathcal{B}_{t},
\end{equation*}
using \eqref{innerV}, it follows that 
\begin{equation*}
d(U_{n}^N,\text{v}_{j})_{V} = (f(U_{n}^N),\text{v}_{j})dt + (\sigma(\cdot,U_{n}^N),\text{v}_{j})d\mathcal{B}_{t}.
\end{equation*}
Applying Itô's formula for the function $x\mapsto x^{2}$  yields
\begin{equation*}
d(U_{n}^N,\text{v}_{j})_{V}^{2} = 2(U_{n}^N,\text{v}_{j})_{V}[(f(U_{n}^N),\text{v}_{j})dt +(\sigma(\cdot,U_{n}^N),\text{v}_{j})d\mathcal{B}_{t}] + \sum_{k=1}^{\infty}\vert(\sigma^{k}(\cdot,U_{n}^N),\text{v}_{j})\vert^{2}dt.
\end{equation*}
Multiplying by $\dfrac{1}{\lambda_{j}}$ and summing over $j=1,...,n,$ give
\begin{equation}
d\|U_{n}^N\|_{V}^{2} = 2(f(U_{n}^N),U_{n}^N)dt + 2(\sigma(\cdot,U_{n}^N),U_{n}^N) d\mathcal{B}_{t} + \sum_{k=1}^{\infty}\sum_{j=1}^{n}\dfrac{1}{\lambda_{j}}\vert(\sigma^{k}(\cdot,U_{n}^N),\text{v}_{j})\vert^{2}dt.\label{ItoV}
\end{equation}

Next, we estimate the $r.h.s$ of the equation \eqref{ItoV}. Denote by $\widetilde{\sigma}_{n}^{k},$ $k \in \mathbb{N}^+,$ the solution to the modified Stokes problem \eqref{Stokes1} for $f = \sigma^{k}(\cdot,U_{n}^N)$, $k \in \mathbb{N}^+.$ Then thanks to \eqref{Stokes1eq}, \eqref{Stokes1ineq} and \eqref{growth2}, we obtain
\begin{align}
\sum_{k=1}^{\infty}\sum_{j=1}^{n}\dfrac{1}{\lambda_{j}}\vert(\sigma^{k}(\cdot,U_{n}^N),\text{v}_{j})\vert^{2} &= \sum_{k=1}^{\infty}\sum_{j=1}^{n}\dfrac{1}{\lambda_{j}}\vert(\widetilde{\sigma}_{n}^{k},\text{v}_{j})_{V}\vert^{2} = \sum_{k=1}^{\infty}\|\widetilde{\sigma}_{n}^{k}\|_{V}^{2} \notag\\ & \leq C\sum_{k=1}^{\infty}\|\sigma^{k}(\cdot,U_{n}^N)\|_{2}^{2}  \leq C\ell(1 +\|U_{n}^N\|_{V}^{2}). \label{Difs}
\end{align}
By the divergence theorem, we have 
\begin{align}
(\mu\Delta U_{n}^N, U_{n}^N)  = -\mu \|\nabla U_{n}^N\|_{2}^{2}.
\end{align}
Due to the anti-symmetry of the trilinear form \eqref{trilin2}, we notice that 
\begin{align}
(U_{n}^N\cdot \nabla U_{n}^N,U_{n}^N)& = b(U_{n}^N,U_{n}^N,U_{n}^N) = -b(U_{n}^N,U_{n}^N,U_{n}^N)= 0.
\end{align}
Using the divergence theorem multiple times together with the symmetry of the matrix $A_n^N$, we obtain
\begin{align}
(\alpha_{1}\text{div}(U_{n}^N\cdot \nabla A_{n}^N),U_{n}^N) = - \alpha_{1}\int_{\mathcal{D}}( U_{n}^N\cdot \nabla A_{n}^N) \cdot \nabla U_{n}^N dx = -\dfrac{\alpha_{1}}{2}\int_{\mathcal{D}} (U_{n}^N\cdot \nabla A_{n}^N)\cdot A_{n}^N dx =0.
\end{align}
Again by the divergence theorem, the symmetry of $A_n^N$ and Young's inequality for  $\varepsilon_{1}, \varepsilon_{2} > 0$, we find 
\begin{align}
\vert(\alpha_{2}\text{div}((A_{n}^N)^{2}), U_{n}^N)\vert & \leq \left\vert \alpha_{2}\int_{\mathcal{D}} (A_{n}^N)^{2}\cdot \nabla U_{n}^N  dx \right\vert  = \left\vert\dfrac{\alpha_{2}}{2}\int_{\mathcal{D}} (A_{n}^N)^{2}\cdot A_{n}^N dx\right\vert\notag\\ & \leq \dfrac{\vert\alpha_{2}\vert}{2}\|A_{n}^{N}\|^{2}_4\|A_{n}^N\|_{2} \leq \dfrac{\varepsilon_{1}}{2}\|A_{n}^N\|_{4}^{4}+\dfrac{\vert\alpha_{2}\vert^{2}}{8\varepsilon_{1}}\|A_{n}^N\|_{2}^{2},\label{ep1}
\end{align} 
\begin{align}
\vert(\alpha_{1}\text{div}((\nabla U_{n}^N)^{T}A_{n}^N+A_{n}^N(\nabla U_{n}^N)),U_{n}^N)\vert & \leq \left\vert\int_{\mathcal{D}} \dfrac{\alpha_1}{2}((\nabla U_{n}^N)^{T}A_{n}^N + A_{n}^N(\nabla U_{n}^N)) \cdot A_{n}^N dx\right\vert\notag\\ & \leq \left\vert \int_{\mathcal{D}}\alpha_1 A_n^N (\nabla U_n^N)\cdot A_n^N dx \right\vert
\leq\alpha_{1}\|A_{n}^N\|_{4}^2\|\nabla U_{n}^N\|_{2} \notag\\& \leq c_{\alpha_{1}}\|A_{n}^N\|_{4}^{2}\|A_{n}^N\|_{2}  \leq \frac{\varepsilon_{2}}{2}\|A_{n}^N\|_{4}^{4} + \dfrac{c_{\alpha_{1}}^{2}}{2\varepsilon_{2}}\|A_{n}^N\|_{2}^{2}.\label{ep2}
\end{align}
The divergence theorem and the symmetry of $A_n^N$, imply
\begin{align}
(\beta\text{div}(\vert A_{n}^N\vert^{2}A_{n}^N),U_{n}^N) &= -\beta\int_{\mathcal{D}} \vert A_{n}^N\vert^{2}A_{n}^N\cdot \nabla U_{n}^N dx = -\dfrac{\beta}{2}\| A_{n}^N\|_{4}^{4}.
\end{align}
Using Young's inequality and \eqref{growth1}, we find
\begin{align}
(\Phi(\cdot,U_{n}^N),U_{n}^N) & \leq \|\Phi(\cdot,U_{n}^N)\|_{2}\|U_{n}^N\|_{2} \leq \dfrac{1}{2}\|\Phi(\cdot,U_{n}^N)\|_{2}^{2} + \dfrac{1}{2}\|U_{n}^N\|_{2}^{2} \leq \dfrac{\ell}{2}(1+\|U_{n}^N\|_{V}^{2}) + \dfrac{1}{2}\|U_{n}^N\|_{V}^{2}.\label{Force}
\end{align}
Taking into account the above inequalities \eqref{Difs}-\eqref{Force}, and choosing $\varepsilon_{1}, \varepsilon_2 = \dfrac{\beta}{4}$, the equation \eqref{ItoV} writes
\begin{align}
d\|U_{n}^N\|_{V}^{2}&+ \dfrac{\beta}{2}\phi_N(U_n^N)\|A_{n}^N\|_{4}^{4}dt+2\mu \|\nabla U_{n}^N\|_{2}^{2}dt \notag\\
&\leq \left( \dfrac{4c_{\alpha_{1}}^{2}+\vert\alpha_{2}\vert^{2}}{\beta} \phi_N(U_n^N)\|A_{n}^N\|_{2}^{2} +C+ C(\ell) \|U_{n}^N\|_{V}^{2}\right)dt + 2(\sigma(\cdot,U_{n}^N),U_{n}^N)d\mathcal{B}_{t} \notag\\ & \leq (C+  C(\alpha_{1},\alpha_{2},\beta,\ell)\|U_{n}^N\|_{V}^{2})dt+ 2(\sigma(\cdot,U_{n}^N),U_{n}^N)d\mathcal{B}_{t}. \label{ineqV1}
\end{align} 
We integrate the inequality \eqref{ineqV1} over the time interval $[0,s]$, for any $s\in [0,\tau_{N}^{n}\wedge t],$ $t\in [0,T]$
\begin{align}
\|U_{n}^N(s)\|_{V}^{2}&+\dfrac{\beta}{2}\int_{0}^{s}\phi_N(U_n^N)\|A_{n}^N\|_{4}^{4}dr + 2\mu \int_{0}^{s}\|\nabla U_{n}^N\|_{2}^{2}dr\notag\\ & \leq \|U_{n}(0)\|_{V}^{2} + C(\ell, T) + C(\alpha_{1},\alpha_{2},\beta,\ell)\int_{0}^{s}\|U_{n}^N\|_{V}^{2}dr + 2\int_{0}^{s}(\sigma(r,U_{n}^N),U_{n}^N)d\mathcal{B}_{r}.\label{INTV}
\end{align} 
The Burkholder-Davis-Gundy inequality together with the estimate \eqref{growth2} and Young's inequality, yield
\begin{align*}
\mathbb{E}&\sup_{s\in [0,\tau_{N}^{n}\wedge t]}\left\vert \int_{0}^{s}(\sigma(r,U_{n}^N),U_{n}^N)d\mathcal{B}_{r}\right\vert \leq C \mathbb{E}\left(\int_{0}^{\tau_{N}^{n}\wedge t}\sum_{k=1}^{\infty}\vert (\sigma^{k}(s,U_{n}^N),U_{n}^N)\vert^{2}ds\right)^{\frac{1}{2}}\notag\\ 
& \leq C \mathbb{E}\left(\int_{0}^{\tau_{N}^{n}\wedge t}\sum_{k=1}^{\infty}\| \sigma^{k}(s,U_{n}^N)\|^{2}_{2}\|U_{n}^N\|^{2}_{2}ds\right)^{\frac{1}{2}} \leq C \mathbb{E}\left(\int_{0}^{\tau_{N}^{n}\wedge t}\ell(1+\|U_{n}^N\|^{2}_{V})\|U_{n}^N\|^{2}_{V} ds\right)^{\frac{1}{2}}\notag\\ 
& \leq  \mathbb{E}\left[(\sup_{s\in [0,\tau_{N}^{n}\wedge t]}\|U_{n}^N\|^{2}_{V})^{\frac{1}{2}}\left(C^{2}\ell\int_{0}^{\tau_{N}^{n}\wedge t}(1+\|U_{n}^N\|^{2}_{V}) ds\right)^{\frac{1}{2}}\right]\notag\\ 
& \leq \dfrac{1}{4}\mathbb{E}\sup_{s\in [0,\tau_{N}^{n}\wedge t]}\|U_{n}^N\|^{2}_{V} + C^{2}\ell \mathbb{E}\int_{0}^{\tau_{N}^{n}\wedge t}(1+\|U_{n}^N\|^{2}_{V}) ds.
\end{align*}
Given the above estimate, we take the supremum on $s\in [0,\tau_{N}^{n}\wedge t]$ and introduce the expectation in the inequality \eqref{INTV}, we obtain 
\begin{align*}
\dfrac{1}{2}\mathbb{E}\sup_{s\in [0,\tau_{N}^{n}\wedge t]}\|U_{n}^N(s)\|_{V}^{2}&+\dfrac{\beta}{2}\mathbb{E}\int_{0}^{\tau_{N}^{n}\wedge t}\phi_N(U_n^N)\|A_{n}^N\|_{4}^{4}dr  + 2\mu \mathbb{E}\int_{0}^{\tau_{N}^{n}\wedge t}\|\nabla U_{n}^N\|_{2}^{2}dr\notag \\ & \leq C + \mathbb{E}\|U_{n}(0)\|_{V}^{2} + C\mathbb{E}\int_{0}^{\tau_{N}^{n}\wedge t}\|U_{n}^N\|_{V}^{2}dr.
\end{align*} 
By Gronwall's inequality, we infer 
\begin{align}
\mathbb{E}\sup_{s\in [0,\tau_{N}^{n}\wedge t]}\|U_{n}^N(s)\|_{V}^{2}&+\dfrac{\beta}{2}\mathbb{E}\int_{0}^{\tau_{N}^{n}\wedge t}\phi_N(U_n^N)\|A_{n}^N\|_{4}^{4}dr + 2\mu \mathbb{E}\int_{0}^{\tau_{N}^{n}\wedge t}\|\nabla U_{n}^N\|_{2}^{2}dr\notag\\ & \leq C (1 + \mathbb{E}\|U(0)\|_{V}^{2}), \qquad \forall t \in [0,T].\label{Gron}
\end{align} 
Therefore, $\mathbb{E}\sup_{s\in [0,\tau_{N}^{n}\wedge T]}\|U_{n}^N(s)\|_{V}^{2} \leq C,$ where $C$ is a positive constant independent of $n$ and $N$. Fix $n \in \mathbb{N}$ and notice that
\begin{align}
\mathbb{E}\sup_{s\in [0,\tau_{N}^{n}\wedge T]}\|U_{n}(s)\|_{V}^{2} & \geq \mathbb{E}(\max_{s\in [0,\tau_{N}^{n}]}\textbf{1}_{\lbrace\tau_{N}^{n}< T\rbrace}\|U_{n}(s)\|_{V}^{2})  \geq N^{2}\mathbb{P}(\tau_{N}^{n}< T).\label{Stopping}
\end{align}
Thus, there	exists a	 subset $\bar{\Omega} \subset \Omega$	with	 full measure \textit{i.e.}	$P(\bar{\Omega})=1,$ and for any $\omega\in	\bar{\Omega}$, there exists $N_0$ such that for all $N\geq N_0$, $\tau_N^n=T,$ see \textit{e.g.} \cite[Theorem 1.2.1]{breckner1999}. Since	$V\hookrightarrow W^*$,	we obtain $\phi_N(U_n^N)=1$	for	all	$s\in[0,T]$	and	$N\geq N_0$. Set $U_n := U_n^{N_0}=\displaystyle\lim_{N\to\infty}U_n^N$ with respect to the	$V$-norm, hence the system \eqref{Galerkin.syst2} becomes of the following form $\mathbb{P}-a.s.$ in $\Omega$
\begin{equation}
\left\{
\begin{array}{l}
d\left( U_n -\alpha_{1}P_n\Delta U_{n},\varphi \right) =\bigl( \mu P_n\Delta U_{n}-P_nU_{n}\cdot \nabla U_n + \alpha_1P_n\mathrm{div}\left((\nabla U_{n})^{T}A_n + A_{n}(\nabla U_{n})\right)\vspace{2mm} \\
\vspace{2mm} 
\qquad\qquad\qquad\qquad +\alpha_{1}P_n\mathrm{div}(U_{n}\cdot\nabla A_{n})+\alpha_{2}P_n\mathrm{div}(A_{n}^{2}) + \beta	P_n{\rm div}\left(|A_n|^2A_n\right)
\vspace{2mm} \\
\vspace{2mm} 		
\qquad\qquad\qquad\qquad+P_n\Phi(t,U_{n}), \varphi \bigr)dt
+\left(P_n\sigma(t,U_{n}),\varphi \right) \,d\mathcal{B}(t),\quad  \forall \varphi \in V_{n}, \phantom{x} t\in[0,T],
\vspace{2mm} \\
U_{n}(0)=U_{n,0}.
\end{array}
\right. \label{Galerkin.syst3}
\end{equation}
Moreover, since $\tau_{N}^{n} \to T$ in probability, as $N \to \infty,$ and the sequence of stopping times $\lbrace \tau_{N}^{n}\rbrace_{N}$ is monotone for any fixed $n$, one can apply the monotone convergence theorem in order to pass to the limit in the inequality \eqref{Gron}, as $N \to \infty$
\begin{align*}
\mathbb{E}\sup_{s\in [0, t]}\|U_{n}(s)\|_{V}^{2}&+\dfrac{\beta}{2}\mathbb{E}\int_{0}^{t}\|A_{n}\|_{4}^{4}ds + 2\mu \mathbb{E}\int_{0}^{t}\|\nabla U_{n}\|_{2}^{2}ds \leq C (1 + \mathbb{E}\|U(0)\|_{V}^{2}), \qquad \forall t \in [0,T].
\end{align*} 
From the above inequality, we notice that
\begin{equation*}
\mathbb{E}\int_{0}^{t}\|A_{n}\|_{4}^{4}ds \leq C(1+\mathbb{E}\|U_{0}\|_{V}^{2}), \phantom{xxxxxxx} \forall t \in [0,T].
\end{equation*}
Then using \eqref{KORN*}, implies 
\begin{equation*}
\mathbb{E}\|U_{n}\|_{L^{4}(0,t;W^{1,4})}^{4}\leq C(1+\mathbb{E}\|U_{0}\|_{V}^{2}), \phantom{xxxxx} \forall t\in [0,T].
\end{equation*}
\end{proof} 

\subsection{Uniform estimates}

\begin{lem}
Assume that $U_0\in L^{p} (\Omega,V)$ and the hypotheses \eqref{lipschitz1}, \eqref{growth1}, \eqref{lipschitz2}, \eqref{growth2} hold. Then, the solution $U_{n}$ to the equation \eqref{Galerkin.syst3} belongs to $L^{p}(\Omega,C([0,T],V_{n}))$ and verifies 
\begin{equation}
\mathbb{E}\sup_{s\in [0, t]}\|U_{n}(s)\|_{V}^{p} \leq C (1 + \mathbb{E}\|U_0\|_{V}^{p}), \label{E.1}
\end{equation}
for all $t \in [0,T]$, where $C$ is a positive constant independent of $n$. \label{estimate1}
\end{lem}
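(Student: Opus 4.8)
The plan is to promote the $L^{2}(\Omega)$-bound of Lemma \ref{estimate3} to an $L^{p}(\Omega)$-bound by applying It\^o's formula to the function $x\mapsto x^{p/2}$ (rigorously, to $x\mapsto(x+\varepsilon)^{p/2}$, letting $\varepsilon\downarrow 0$ at the end, since $x^{p/2}$ fails to be $C^{2}$ at the origin when $2<p<4$) composed with the real semimartingale $X_{n}(t):=\|U_{n}(t)\|_{V}^{2}$, whose It\^o differential is the one already written in \eqref{ItoV} with the cut-off removed, i.e.
\begin{equation*}
dX_{n}=\Bigl(2(f(U_{n}),U_{n})+\sum_{k,j}\lambda_{j}^{-1}|(\sigma^{k}(\cdot,U_{n}),\mathrm{v}_{j})|^{2}\Bigr)dt+2(\sigma(\cdot,U_{n}),U_{n})\,d\mathcal{B}(t),\qquad d\langle X_{n}\rangle_{t}=4\sum_{k}|(\sigma^{k}(\cdot,U_{n}),U_{n})|^{2}\,dt .
\end{equation*}
This produces $d\|U_{n}\|_{V}^{p}=\tfrac{p}{2}\|U_{n}\|_{V}^{p-2}\,dX_{n}+\tfrac{p}{2}\bigl(\tfrac{p}{2}-1\bigr)\|U_{n}\|_{V}^{p-4}\,d\langle X_{n}\rangle_{t}$.

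For the drift I reuse verbatim the pathwise estimates \eqref{Difs}--\eqref{Force} from the proof of Lemma \ref{estimate3}, which, after discarding the non-negative dissipative terms $\tfrac{\beta}{2}\|A_{n}\|_{4}^{4}$ and $2\mu\|\nabla U_{n}\|_{2}^{2}$, give $2(f(U_{n}),U_{n})+\sum_{k,j}\lambda_{j}^{-1}|(\sigma^{k}(\cdot,U_{n}),\mathrm{v}_{j})|^{2}\le C+C\|U_{n}\|_{V}^{2}$; for the It\^o correction term, \eqref{growth2} together with $\|U_{n}\|_{2}\le\|U_{n}\|_{V}$ yields $\|U_{n}\|_{V}^{p-4}\sum_{k}|(\sigma^{k}(\cdot,U_{n}),U_{n})|^{2}\le \ell\,\|U_{n}\|_{V}^{p-2}(1+\|U_{n}\|_{V}^{2})$, with no singularity at $U_{n}=0$ since $p>2$. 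Multiplying the first bound by $\tfrac{p}{2}\|U_{n}\|_{V}^{p-2}$ and applying Young's inequality (with conjugate exponents $\tfrac{p}{p-2},\tfrac{p}{2}$) collapses the whole drift to the form $C+C\|U_{n}\|_{V}^{p}$.

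Two points then require care. First, to be allowed to absorb a copy of $\tfrac12\mathbb{E}\sup_{[0,t]}\|U_{n}\|_{V}^{p}$ this quantity must a priori be finite; since $V_{n}$ is finite-dimensional and $U_{n}\in L^{2}(\Omega,C([0,T],V_{n}))$ has continuous paths, I localise with the stopping times $\tau_{M}^{n}:=\inf\{s\ge0:\|U_{n}(s)\|_{V}\ge M\}\wedge T$, carry out the computation on $[0,\tau_{M}^{n}\wedge t]$ (where the supremum is $\le M^{p}$), derive a bound independent of $M$, and let $M\uparrow\infty$ using $\tau_{M}^{n}\uparrow T$ a.s. and monotone convergence. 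Second, in the Burkholder--Davis--Gundy estimate of $\mathbb{E}\sup_{s}\bigl|p\int_{0}^{s}\|U_{n}\|_{V}^{p-2}(\sigma(\cdot,U_{n}),U_{n})\,d\mathcal{B}\bigr|$ the quadratic variation density is, by \eqref{growth2}, $\le C\,\|U_{n}\|_{V}^{2(p-2)}\|U_{n}\|_{V}^{2}(1+\|U_{n}\|_{V}^{2})\le C(1+\|U_{n}\|_{V}^{2p})$, and I use $\int_{0}^{\tau_{M}^{n}\wedge t}\|U_{n}\|_{V}^{2p}\,dr\le\bigl(\sup_{[0,\tau_{M}^{n}\wedge t]}\|U_{n}\|_{V}^{p}\bigr)\int_{0}^{\tau_{M}^{n}\wedge t}\|U_{n}\|_{V}^{p}\,dr$, so that BDG followed by Young's inequality yields exactly $\tfrac12\mathbb{E}\sup_{[0,\tau_{M}^{n}\wedge t]}\|U_{n}\|_{V}^{p}+C\int_{0}^{t}\mathbb{E}\sup_{[0,\tau_{M}^{n}\wedge r]}\|U_{n}\|_{V}^{p}\,dr+C$. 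Taking the supremum and expectation in the integrated inequality, absorbing, and combining with $\mathbb{E}\|U_{n}(0)\|_{V}^{p}\le\mathbb{E}\|U_{0}\|_{V}^{p}$ (Bessel's inequality), Gronwall's lemma gives \eqref{E.1} with a constant independent of $n$; membership in $L^{p}(\Omega,C([0,T],V_{n}))$ is then immediate.

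I expect the only genuine difficulty to be the bookkeeping in the BDG step --- ensuring that the power generated by $\|U_{n}\|_{V}^{p-2}$ against the linear growth of $\sigma$ balances so that precisely one factor of the supremum can be split off and absorbed --- together with the localisation needed to legitimise that absorption; everything else is a direct repetition of the estimates already carried out for \eqref{E.v}.
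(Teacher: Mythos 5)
Your proposal is correct, and it reaches \eqref{E.1} by the same overall skeleton as the paper (the It\^o identity for $\|U_{n}\|_{V}^{2}$, the drift bounds \eqref{Difs}--\eqref{Force}, localisation by the stopping times, BDG, absorption, Gronwall), but the mechanism by which the exponent $p$ is introduced is genuinely different. The paper never applies It\^o's formula to the power function: it first integrates the differential inequality for $\|U_{n}\|_{V}^{2}$ over $[0,s]$, raises the resulting \emph{integrated} inequality to the power $q=p/2$ via the elementary bound $(a+b+c)^{q}\le C_{q}(a^{q}+b^{q}+c^{q})$, and then invokes the $q$-th moment form of the Burkholder--Davis--Gundy inequality, $\mathbb{E}\sup_{s}|\int_{0}^{s}\!\cdot\, d\mathcal{B}|^{q}\le C\,\mathbb{E}(\int\sum_{k}|\cdot|^{2})^{q/2}$. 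This sidesteps entirely the two technical points you have to handle: the failure of $x\mapsto x^{p/2}$ to be $C^{2}$ at the origin for $2<p<4$ (your $\varepsilon$-regularisation) and the estimation of the It\^o correction term $\|U_{n}\|_{V}^{p-4}\,d\langle X_{n}\rangle$. In exchange, your route only needs the first-moment BDG inequality and produces a genuine pathwise differential inequality for $\|U_{n}\|_{V}^{p}$, which is marginally more information. Your balancing of exponents in the BDG step and the absorption via localisation are both sound; the only slip is the missing factor $\tfrac12$ in front of the second-order It\^o term ($\tfrac12 F''=\tfrac{p}{4}(\tfrac{p}{2}-1)x^{p/2-2}$, not $\tfrac{p}{2}(\tfrac{p}{2}-1)x^{p/2-2}$), which is immaterial since that term is only ever bounded from above.
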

\begin{proof}
For each $n \in \mathbb{N}$, consider the following sequence of stopping times
\begin{equation*}
\tau_{N}^{n}= \inf\lbrace t \geq 0, \|U_{n}\|_{V}\geq N\rbrace \wedge T, \quad N\in \mathbb{N}.
\end{equation*}

Recall the equation \eqref{Galerkin.syst3} and take $\varphi:=\text{v}_j,$ $j=1,\cdots,n.$ Set 
\begin{align}
f(U_{n})  :=  \mu \Delta U_{n} - U_{n}\cdot \nabla U_{n} &+\alpha_{1}\mathrm{div}(U_{n}\cdot \nabla A_{n}+(\nabla U_{n})^{T}A_{n}+A_{n}(\nabla U_{n}))\notag\\
& +\alpha_{2}\mathrm{div}(A_{n}^{2})+\beta\mathrm{div}(\vert A_{n}\vert^{2}A_{n}) + \Phi(\cdot,U_{n}).\label{f(Un)2}
\end{align}
Then, using \eqref{innerV}, we obtain
\begin{equation*}
d(U_{n},\text{v}_{j})_{V} = (f(U_{n}),\text{v}_{j})dt + (\sigma(\cdot,U_{n}),\text{v}_{j})d\mathcal{B}_{t}.
\end{equation*}
Applying Itô's formula for $x\mapsto x^2$, then multiplying by $\dfrac{1}{\lambda_{j}}$ and summing over $j=1,...,n,$ yield
\begin{equation*}
d\|U_{n}\|_{V}^{2} = 2(f(U_{n}),U_{n})dt + 2(\sigma(\cdot,U_{n}),U_{n}) d\mathcal{B}_{t} + \sum_{k=1}^{\infty}\sum_{j=1}^{n}\dfrac{1}{\lambda_{j}}\vert(\sigma^{k}(\cdot,U_{n}),\text{v}_{j})\vert^{2}dt.
\end{equation*}
Taking into account the inequalities \eqref{Difs}-\eqref{Force} with $U_n:=\displaystyle\lim_{N\to\infty}U_n^N,$ (see the details in the proof of Lemma \ref{estimate3}), and choose $\varepsilon_{1}, \varepsilon_2 =\frac{\beta}{2}$ in \eqref{ep1}, \eqref{ep2}, we obtain
\begin{align*}
d\|U_{n}\|_{V}^{2} \leq \left(\dfrac{4c_{\alpha_{1}}^{2}+\vert\alpha_{2}\vert^{2}}{2\beta}\|A_{n}\|_{2}^{2}+ C(1+\|U_{n}\|_{V}^{2})\right)dt + 2(\sigma(\cdot,U_{n}),U_{n})d\mathcal{B}_{t}.
\end{align*}
Integrating the above inequality over the time interval $[0,s]$, $0\leq s \leq \tau_{N}^{n}\wedge t,$ for $t \in [0,T]$, and taking both sides to the power $q$, for $q>1$, we find 
\begin{align}
\|U_{n}(s)\|_{V}^{2q}&\leq C_{q}\left(\|U_{n}(0)\|_{V}^{2q} + \int_{0}^{s}\left(C(\alpha_{1},\alpha_{2},\beta,q)\|U_{n}\|_{V}^{2q}+ C(1+\|U_{n}\|_{V}^{2q})\right)dr+ 2\left\vert\int_{0}^{s}(\sigma(r,U_{n}),U_{n})d\mathcal{B}_{r}\right\vert^q\right)\notag\\
& \leq C_{q}\left(\|U_{n}(0)\|_{V}^{2q} + C\int_{0}^{s} (1+\|U_{n}\|_{V}^{2q})dr+ 2\left\vert\int_{0}^{s}(\sigma(r,U_{n}),U_{n})d\mathcal{B}_{r}\right\vert^q\right).\label{INEQVq}
\end{align}
Using the Burkholder-Davis-Gundy inequality, the assumption \eqref{growth2} and Young's inequality, we get
\begin{align}
\mathbb{E}&\sup_{s\in [0,\tau_{N}^{n}\wedge t]}\left\vert \int_{0}^{s}(\sigma(r,U_{n}),U_{n})d\mathcal{B}_{r}\right\vert^q  \leq C \mathbb{E}\left( \int_{0}^{\tau_{N}^{n}\wedge t}\sum_{k=1}^{\infty}\vert (\sigma^{k}(r,U_{n}),U_{n})\vert^{2}ds\right)^{\frac{q}{2}}\notag\\
& \leq C \mathbb{E}\left( \int_{0}^{\tau_{N}^{n}\wedge t} \sum_{k=1}^{\infty}\|\sigma^{k}(r,U_{n})\|_{2}^{2}\|U_{n}\|^{2}_{2} ds\right)^{\frac{q}{2}}
\leq C\mathbb{E}\left( \int_{0}^{\tau_{N}^{n}\wedge t} \ell(1+\|U_{n}\|_{V}^{2})\|U_{n}\|_{V}^{2}ds\right)^{\frac{q}{2}} \notag\\ & \leq C\mathbb{E}\left( \int_{0}^{\tau_{N}^{n}\wedge t}\ell^q(1+\|U_{n}\|_{V}^{2q})\|U_{n}\|_{V}^{2q}ds\right)^{\frac{1}{2}} \leq \mathbb{E}\left[(\sup_{s\in [0,\tau_{N}^{n}\wedge t]}\|U_{n}\|_{V}^{2q})^{\frac{1}{2}}\left(C^{2}\ell^q\int_{0}^{\tau_{N}^{n}\wedge t}(1+\|U_{n}\|_{V}^{2q})ds\right)^{\frac{1}{2}}\right]\notag\\ &
\leq \dfrac{1}{4C_{q}}\mathbb{E}(\sup_{s\in [0,\tau_{N}^{n}\wedge t]}\|U_{n}\|_{V}^{2q})+ C^{2}(q)\ell^q \mathbb{E}\int_{0}^{\tau_{N}^{n}\wedge t}(1+\|U_{n}\|_{V}^{2q})ds.\label{Burkh}
\end{align}
Taking the supremum on $s \in [0,\tau_{N}^{n}\wedge t]$, applying the expectation and then substituting \eqref{Burkh} in the inequality \eqref{INEQVq}, we derive
\begin{align*}
\dfrac{1}{2}\mathbb{E}\sup_{s\in [0,\tau_{N}^{n}\wedge t]}\|U_{n}(s)\|_{V}^{2q} &\leq C \mathbb{E}\|U_{n}(0)\|_{V}^{2q} + C +C\mathbb{E}\int_{0}^{\tau_{N}^{n}\wedge t} \|U_{n}\|_{V}^{2q}dr.
\end{align*}
By Gronwall's inequality, we deduce that for any $q > 1$ and $t \in [0,T],$
\begin{equation}
\mathbb{E}\sup_{s\in[0,\tau_{N}^{n}\wedge t]} \|U_{n}(s)\|_{V}^{2q} \leq C(1+\mathbb{E}\|U_{0}\|_{V}^{2q}).\label{Gron2}
\end{equation}
Therefore, $\mathbb{E}\sup_{s\in[0,\tau_{N}^{n}\wedge T]} \|U_{n}(s)\|_{V}^{2q} \leq C,$ where $C$ is positive and independent of $n$ and $N$. Now, fix $n \in \mathbb{N}$, and recall the definition of $\tau_{N}^{n}$, we write
\begin{align*}
\mathbb{E}\sup_{s \in [0,\tau_{N}^{n}\wedge T]}\|U_{n}(s)\|_{V}^{2q} &\geq \mathbb{E}(\max_{s\in [0,\tau_{N}^{n}]}\textbf{1}_{\lbrace \tau_{N}^{n}< T\rbrace} \|U_{n}(s)\|_{V}^{2q}) \geq N^{2}\mathbb{P}(\tau_{N}^{n}< T).
\end{align*}
As in the proof of Lemma \ref{estimate3}, notice that $\tau_{N}^{n} \to T$ in probability, as $N\to \infty$. Therefore, we deduce the existence of a subsequence $\lbrace\tau_{N_{k}}^{n}\rbrace \subset \lbrace\tau_{N}^{n}\rbrace$, that may depend on $n$, such that 
$\tau_{N_{k}}^{n}\to T$ \text{a.e.}, \text{as} $k\to \infty.$ Moreover, the sequence of stopping times $\lbrace \tau_{N}^{n}\rbrace_{N \in \mathbb{N}}$ is monotone for any fixed $n$, then by the monotone convergence theorem we can pass to the limit in the inequality \eqref{Gron2}, as $N \to \infty$,
\begin{equation*}
\mathbb{E}\sup_{s\in[0, t]} \|U_{n}(s)\|_{V}^{2q} \leq C(1+\mathbb{E}\|U_{0}\|_{V}^{2q}),
\end{equation*}
for any $q > 1$ and $t \in [0,T]$. Taking $q = \dfrac{p}{2}$ concludes the proof.
\end{proof}

\begin{lem}
Assume that $U_0 \in L^p(\Omega,V)$ and the hypotheses \eqref{lipschitz1}, \eqref{growth1}, \eqref{lipschitz2}, \eqref{growth2} hold. Then, the solution $U_n$ to the system \eqref{Galerkin.syst3} verifies for some $\delta \in (0,1)$
\begin{equation}
\mathbb{E}\,\|U_{n}\|_{\mathcal{C}^{0,\delta}([0,T],H)}\leq C,
\label{estimate2}
\end{equation}
where $C$ is a positive constant independent of $n$.
\label{lemma.4}
\end{lem}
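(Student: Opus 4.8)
\noindent\emph{Plan.} The object that genuinely solves a stochastic evolution equation is not $U_{n}$ but $Z_{n}:=U_{n}-\alpha_{1}P_{n}\Delta U_{n}$, which is $V_{n}$-valued: testing \eqref{Galerkin.syst3} with $\varphi\in V_{n}$ and using that $P_{n}$ is $(\cdot,\cdot)$-self-adjoint and acts as the identity on $V_{n}$ yields, with $f(U_{n})$ as in \eqref{f(Un)2},
\[
(Z_{n}(t),\varphi)=(Z_{n}(0),\varphi)+\int_{0}^{t}(f(U_{n}),\varphi)\,ds+\int_{0}^{t}(\sigma(s,U_{n}),\varphi)\,d\mathcal{B}(s),\qquad\varphi\in V_{n}.
\]
Thus $Z_{n}=Z_{n}(0)+J_{1}^{n}+J_{2}^{n}$, where $J_{1}^{n}(t):=\int_{0}^{t}P_{n}f(U_{n})\,ds$ and $J_{2}^{n}(t):=\int_{0}^{t}P_{n}\sigma(s,U_{n})\,d\mathcal{B}(s)$, an identity I would read in $X:=(V\cap\mathbf{H}^{2})^{*}$, into which $V_{n}\subset H$ embeds; note $Z_{n}(0)=U_{n,0}-\alpha_{1}P_{n}\Delta U_{n,0}$ is constant in time with $\|Z_{n}(0)\|_{X}\le C\|U_{0}\|_{V}$. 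The plan is to estimate $J_{1}^{n}$ in a Hölder space with values in $X$, estimate $J_{2}^{n}$ in a Hölder space with values in $H$, and then transfer the time regularity of $Z_{n}$ to $U_{n}$ by inverting $I-\alpha_{1}\Delta$.

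\emph{The drift.} For $\varphi\in V\cap\mathbf{H}^{2}$ one has $(P_{n}f(U_{n}),\varphi)=(f(U_{n}),P_{n}\varphi)$. Moving the outer divergences — and, in the term $\alpha_{1}\mathrm{div}(U_{n}\cdot\nabla A_{n})$, one further derivative — onto $P_{n}\varphi$, I would then use the classical uniform-in-$n$ boundedness of the projections $P_{n}$ on $V$ and on $V\cap\mathbf{H}^{2}$, together with $\mathbf{H}^{2}\hookrightarrow\mathbf{W}^{1,4}$ (valid for $d\le 3$), $V\hookrightarrow\mathbf{L}^{4}$ and \eqref{KORN*}, to obtain the pointwise-in-time bound
\[
\|P_{n}f(U_{n})\|_{X}\le C\bigl(1+\|U_{n}\|_{V}^{2}+\|U_{n}\|_{V}\|A_{n}\|_{4}+\|A_{n}\|_{4}^{2}+\|A_{n}\|_{4}^{3}\bigr).
\]
The slowest term is the cubic one, $\|A_{n}\|_{4}^{3}$, which belongs only to $L^{4/3}(0,T)$; combining this bound with \eqref{E.v}, \eqref{E.14} and Hölder's inequality in time gives $\mathbb{E}\,\|P_{n}f(U_{n})\|_{L^{4/3}(0,T;X)}\le C$. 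Since $\|J_{1}^{n}(t)-J_{1}^{n}(s)\|_{X}\le|t-s|^{1/4}\,\|P_{n}f(U_{n})\|_{L^{4/3}(0,T;X)}$ by Hölder, I conclude $\mathbb{E}\,\|J_{1}^{n}\|_{\mathcal{C}^{0,1/4}([0,T],X)}\le C$.

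\emph{The stochastic term.} From $\|P_{n}\sigma(s,U_{n})\|_{L_{2}^{0}}\le\|\sigma(s,U_{n})\|_{L_{2}^{0}}\le(\ell(1+\|U_{n}\|_{V}^{2}))^{1/2}$, the Burkholder--Davis--Gundy inequality and Hölder in time give, for the exponent $p>2$ of $(H_{1})$,
\[
\mathbb{E}\,\|J_{2}^{n}(t)-J_{2}^{n}(s)\|_{2}^{p}\le C_{p}\,|t-s|^{p/2-1}\,\mathbb{E}\int_{s}^{t}\bigl(\ell(1+\|U_{n}\|_{V}^{2})\bigr)^{p/2}dr\le C\,|t-s|^{p/2},
\]
the last inequality by \eqref{E.1}. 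By Kolmogorov's continuity criterion $J_{2}^{n}$ admits, for every $\delta<\tfrac12-\tfrac1p$, an $H$-valued $\delta$-Hölder modification with $\mathbb{E}\,\|J_{2}^{n}\|_{\mathcal{C}^{0,\delta}([0,T],H)}\le C$, uniformly in $n$.

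\emph{Conclusion, and the main difficulty.} Fix $\delta\in(0,1)$ with $\delta\le\min\{\tfrac14,\tfrac12-\tfrac1p\}$. Using $\mathcal{C}^{0,1/4}([0,T],X)\hookrightarrow\mathcal{C}^{0,\delta}([0,T],X)$ and $H\hookrightarrow X$, the three contributions combine into $\mathbb{E}\,\|Z_{n}\|_{\mathcal{C}^{0,\delta}([0,T],X)}\le C$. Finally, comparing the definition of $Z_{n}$ with the Stokes problem \eqref{Stokes1} shows that $U_{n}(t)=P_{n}\widetilde{Z_{n}(t)}$, where $\widetilde{g}$ denotes the (very weak) solution of \eqref{Stokes1} with datum $g$; by \eqref{Stokes2ineq} and the $(\cdot,\cdot)$-contractivity of $P_{n}$ (Remark \ref{lem-proj-H}), $\|U_{n}(t)-U_{n}(s)\|_{2}\le C\|Z_{n}(t)-Z_{n}(s)\|_{X}$ with $C$ independent of $n$, which, together with $\mathbb{E}\sup_{[0,T]}\|U_{n}\|_{2}\le\mathbb{E}\sup_{[0,T]}\|U_{n}\|_{V}\le C$ from \eqref{E.1}, yields $\mathbb{E}\,\|U_{n}\|_{\mathcal{C}^{0,\delta}([0,T],H)}\le C$. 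The hard part is dealing with the nonlinear terms of higher differential order, $\mathrm{div}(U_{n}\cdot\nabla A_{n})$ and $\mathrm{div}(|A_{n}|^{2}A_{n})$: this is what forces the estimates to be carried out in the dual space $(V\cap\mathbf{H}^{2})^{*}$ against $\mathbf{H}^{2}$-regular test functions, makes the very weak Stokes estimate \eqref{Stokes2ineq} (rather than a plain $V^{*}$ bound) indispensable, and, through the sole $L^{4/3}(0,T)$-integrability of $|A_{n}|^{2}A_{n}$, caps the attainable Hölder exponent at $1/4$.
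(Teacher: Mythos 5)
Your proof is correct and follows essentially the same route as the paper: both estimate the drift by duality against $V\cap\mathbf{H}^{2}$ test functions (with the cubic term $|A_n|^{2}A_n$ capping the Hölder exponent at $1/4$ through its mere $L^{4/3}$-in-time integrability), treat the stochastic integral separately in $H$ via Burkholder--Davis--Gundy, and return to an $H$-valued Hölder bound for $U_n$ through the very weak Stokes estimate \eqref{Stokes2ineq}. The only cosmetic differences are that the paper applies the Stokes lift to the right-hand side terms before integrating in time (rather than to $Z_n=U_n-\alpha_1 P_n\Delta U_n$ at the end) and controls the stochastic term via the embedding $W^{\gamma,p}(0,T;H)\hookrightarrow\mathcal{C}^{0,\delta_2}([0,T],H)$ instead of Kolmogorov's continuity criterion.
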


\begin{proof}
The estimate \eqref{E.1} yields 
\begin{equation}
\mathbb{E}\,\sup_{t\in[0,T]}\|U_{n}(t)\|^p_2\leq C. \label{NormC}
\end{equation}
Next, we show that for a suitable $\delta \in (0,1),$ we have the following
\begin{equation*}
\mathbb{E} \sup_{s,t \in [0,T], s\neq t}\dfrac{\|U_n(t) -U_{n}(s)\|_{2}}{\vert t-s\vert^{\delta}}\leq C.
\end{equation*}
Recall \eqref{f(Un)2}, then setting $\varphi := \text{v}_j,$ $j=1,\cdots,n,$ the equation in \eqref{Galerkin.syst3} becomes as follows
\begin{equation*}
d(U_{n},\text{v}_j)_{V}=(\tilde{f}_{n},\text{v}_j)_{V}dt + \sum_{k=1}^{\infty}(\tilde{\sigma}_{n}^k,\text{v}_j)_{V}d\beta_k(t), \qquad \forall j=1,\cdots,n,
\end{equation*}
where $\tilde{f}_{n}$ and $\tilde{\sigma}_{n}^k,$ $k\in \mathbb{N}^+$, stand for the solutions to the modified Stockes problem \eqref{Stokes1} for $f= P_n f(U_{n})$ and $f = P_n\sigma^k(\cdot,U_{n})$, $k\in \mathbb{N}^+$, respectively. Thus, the following equation holds in $V_n^{*}$
\begin{equation*}
U_{n}(t)-U_{n,0} = \int_{0}^{t}\tilde{f}_{n}ds + \sum_{k=1}^{\infty}\int_{0}^{t}\tilde{\sigma}_{n}^k d\beta_{k}(s),
\end{equation*}
moreover, we get for $0\leq s \leq t\leq T$
\begin{equation*}
(U_{n}(t)-U_{n}(s),\mathrm{v}_{j})_{V}=\int_{s}^{t}(\tilde{f}_{n},\mathrm{v}_{j})_{V}dr +\sum_{k=1}^{\infty}\int_{s}^{t}(\tilde{\sigma}_{n}^k,\mathrm{v}_{j})_{V}d\beta_{k}(r), \qquad \forall j=1,...,n.
\end{equation*}
Recall \eqref{basis}, then multiplying both sides of the above equation by $\frac{1}{\lambda_{j}}$, yields
\begin{equation*}
(U_{n}(t)-U_{n}(s),\mathrm{v}_{j})=\int_{s}^{t}(\tilde{f}_{n},\mathrm{v}_{j})dr + \sum_{k=1}^{\infty}\int_{s}^{t}(\tilde{\sigma}_{n}^k,\mathrm{v}_{j})d\beta_{k}(r), \qquad \forall j=1,...,n.
\end{equation*}
Take $\varphi \in V_{n}$, then we write $\varphi = \sum_{j=1}^{n}\xi_{j}\mathrm{v}_{j}$, for all $\xi_{j} \in \mathbb{R}$, consequentely we obtain
\begin{equation}
(U_{n}(t)-U_{n}(s),\varphi)=\int_{s}^{t}(\tilde{f}_{n},\varphi)dr + \sum_{k=1}^{\infty}\int_{s}^{t}(\tilde{\sigma}_{n}^k,\varphi)d\beta_{k}(r).\label{EQinVn*}
\end{equation}
Now, let $\psi \in H$ then by Remark \ref{lem-proj-H}, $P_n\psi \in	V_n$. Moreover, the following equality	holds
\begin{align*}
(U_{n}(t)-U_{n}(s),\psi)=(P_nU_{n}(t)-P_nU_{n}(s),\psi)=(U_{n}(t)-U_{n}(s),P_n^*\psi)=(U_{n}(t)-U_{n}(s),P_n\psi),
\end{align*}
where we used the fact that $P_n$ is	a self-adjoint operator	$i.e.$ $P_n=P_n^*$. Then, from \eqref{EQinVn*}, it follows
\begin{equation*}
(U_{n}(t)-U_{n}(s),P_n\psi)=\int_{s}^{t}(\tilde{f}_{n},P_n\psi)dr +\sum_{k =1}^\infty\int_{s}^{t}(\tilde{\sigma}_{n}^k,P_n\psi)d\mathcal{\beta}_{k}(r),\quad	\forall	\psi\in	H.
\end{equation*}
Thus
\begin{equation*}
(U_{n}(t)-U_{n}(s),\psi)=\int_{s}^{t}(P_n\tilde{f}_{n},\psi)dr +\sum_{k =1}^\infty\int_{s}^{t}(P_n\tilde{\sigma}_{n}^k,\psi)d\mathcal{\beta}_{k}(r),\quad	\forall	\psi\in	H,
\end{equation*}
and	the	following equation holds	in the $H$-sense	
\begin{equation*}
U_{n}(t)-U_{n}(s)=\int_{s}^{t}P_n\tilde{f}_{n}dr +\sum_{k =1}^\infty\int_{s}^{t}P_n\tilde{\sigma}_{n}^kd\mathcal{\beta}_{k}(r),\quad	\forall	t \in [0,T].
\end{equation*}

Since $H \simeq H^{*}$, we have $\|u\|_{2} = \sup_{\|\psi\|_{2}\leq 1}\vert (u,\psi)\vert,$ for all $u \in H.$ Thus, for any $\psi \in H$
\begin{align}
\|&U_{n}(t)-U_{n}(s)\|_{2} = \sup_{\|\psi\|_{2}\leq 1} \vert(U_{n}(t)-U_{n}(s),\psi)\vert = \sup_{\|\psi\|_{2}\leq 1}\left\vert\int_{s}^{t}(P_n\tilde{f}_{n},\psi)dr + \sum_{k=1}^{\infty}\int_{s}^{t}(P_n\tilde{\sigma}_{n}^k,\psi)d\beta_{k}(r)\right\vert\notag\\
&\leq \int_{s}^{t}\|\tilde{f}_{n}\|_{2}\|P_n\|_{L^2(H,V_n)}dr + \left\|\sum_{k=1}^{\infty}\int_{s}^{t}P_n\tilde{\sigma}_{n}^k d\beta_{k}(r)\right\|_{2} 
\leq \underbrace{\int_{s}^{t}\|\tilde{f}_{n}\|_{2}dr}_{\|\int_s^t\text{det}(U_{n})\|_{2}} + \underbrace{\left\|\sum_{k=1}^{\infty}\int_{s}^{t}P_n\tilde{\sigma}_{n}^k d\beta_{k}(r)\right\|_{2}}_{\|\int_{s}^t\text{stoch}(U_{n})\|_{2}}, \label{detstoch} 
\end{align}
where we used $\|P_n\|_{L^2(H,V_n)} \leq 1$ in the last inequality, see Remark \ref{lem-proj-H}. Thanks to \eqref{Stokes2ineq}, we have
\begin{align}
\int_s^t\|\widetilde{f}_n\|_{2}dr \leq C \int_{s}^{t}\|f(U_{n})\|_{(V \cap H^{2})^{*}}dr.\label{ineq2}
\end{align}
By definition, we know
\begin{equation*}
\|f(U_{n})\|_{(V \cap H^{2})^{*}} = \sup_{\|\varphi\|_{V\cap H^{2}}\leq 1}\vert(f(U_{n}),\varphi)\vert.
\end{equation*}
Next, we estimate each term of the $r.h.s$ of above equality. Using the divergence theorem and the embeddings $V \hookrightarrow \mathbf{L}^{2}(\mathcal{D}),$ $W^{1,4}(\mathcal{D}) \hookrightarrow L^{4}(\mathcal{D})$, $H^{2}(\mathcal{D})\hookrightarrow W^{1,4}(\mathcal{D})$, we obtain
\begin{align}
\vert (\Delta U_{n},\varphi)\vert = \vert(\nabla U_{n}, \nabla\varphi)\vert \leq C\| U_{n}\|_{V}\|\varphi\|_{V},\label{1}
\end{align}
\begin{align}
 \vert(U_{n}.\nabla U_{n},\varphi)\vert  &\leq \|U_{n}\|_{4}\|\nabla U_{n}\|_{4}\|\varphi\|_{2}\leq C\|U_{n}\|_{W^{1,4}}^{2}\|\varphi\|_{V},\label{2}
\end{align}
\begin{align}
\vert (\mathrm{div}((\nabla U_{n})^{T}A_{n}+A_{n}(\nabla U_{n})),\varphi)\vert
&\leq \vert((\nabla U_{n})^{T}A_{n},\nabla \varphi)\vert + \vert(A_{n}(\nabla U_{n}),\nabla \varphi)\vert  \notag\\
& \leq C\|\nabla U_{n}\|_{4}\|\nabla U_{n}\|_{4}\|\nabla \varphi\|_{2}\leq C \|U_{n}\|_{W^{1,4}}^{2}\|\varphi\|_{V},\label{3}
\end{align}
\begin{align}
\vert (\mathrm{div}(U_{n}\cdot \nabla A_{n}),\varphi)\vert &\leq  \vert (U_{n}.\nabla A_{n},\nabla \varphi)\vert\leq \left\vert\int_{\mathcal{D}}\sum_{i,j=1}^{d}U_{n}^{i}A_{n}^{j}\cdot \partial_{i}\partial_{j}\varphi dx\right\vert \notag\\
&\leq \|U_{n}\|_{4}\|\nabla U_{n}\|_{4}\|\varphi\|_{H^2} \leq C\|U_{n}\|_{W^{1,4}}^{2}\|\varphi\|_{H^{2}},\label{4}
\end{align}
\begin{align}
\vert \alpha_{2}(\mathrm{div}(A_{n}^{2}),\varphi)\vert &\leq \vert \alpha_{2}\vert \vert(A_{n}^{2},\nabla \varphi)\vert\leq C \|\nabla U_{n}\|_{4}^{2}\|\nabla \varphi\|_{2} \leq C \|U_{n}\|_{W^{1,4}}^{2}\|\varphi\|_{V},\label{5}
\end{align}
\begin{align}
\vert(\mathrm{div}(\vert A_{n}\vert^{2}A_{n}),\varphi)\vert &\leq  \vert(\vert A_{n}\vert^{2}A_{n},\nabla \varphi)\vert
\leq C\|\nabla U_{n}\|_{4}^{3}\|\nabla \varphi\|_{4}
\notag\\
&\leq C\|U_{n}\|_{W^{1,4}}^{3}\|\varphi\|_{W^{1,4}} \leq C \|U_{n}\|_{W^{1,4}}^{3}\|\varphi\|_{H^{2}}.\label{6}
\end{align}
Moreover by \eqref{growth1}, we have
\begin{align}
 \vert(\Phi(\cdot,U_{n}),\varphi)\vert &\leq \|\Phi(\cdot,U_{n})\|_{2}\|\varphi\|_{2}\leq \ell (1+\|U_{n}\|_{V}^{2})\|\varphi\|_{V}\leq \ell \|\varphi\|_{V}+\ell\|U_{n}\|_{V}^{2}\|\varphi\|_{V}.\label{7}
\end{align}
Combining the estimates \eqref{1}-\eqref{7}, we obtain
\begin{align*}
\vert (f(U_{n}),\varphi)\vert & \leq C (1+\|U_{n}\|_{V}+\|U_{n}\|_{V}^{2}+\|U_{n}\|_{W^{1,4}}^{2}+\|U_{n}\|_{W^{1,4}}^{3})\|\varphi\|_{V\cap H^{2}},
\end{align*}
thus 
\begin{equation}
\|f(U_{n})\|_{(V\cap H^{2})^{*}}\leq C(1+\|U_{n}\|_{V}+\|U_{n}\|_{V}^{2}+\|U_{n}\|_{W^{1,4}}^{2}+\|U_{n}\|_{W^{1,4}}^{3}).\label{VHf}
\end{equation}
Integrating the inequality \eqref{VHf} over $[s,t],$ $t \in [0,T]$, then applying Hölder's inequality, we derive 
\begin{align*}
\int_{s}^{t}\|f(U_{n})\|_{(V\cap H^{2})^{*}}dr \leq C (t-s)^{\frac{1}{4}}\left(\int_{s}^{t} (1+\|U_{n}\|_{V}^{\frac{8}{3}} +\|U_{n}\|_{W^{1,4}}^{4})dr\right)^{\frac{3}{4}}.
\end{align*}
Take the supremum over the time interval $[0,T]$ and apply the expectation to both sides of the above inequality, we obtain
\begin{align*}
\mathbb{E}\sup_{s,t\in[0,T]}\int_{s}^{t}\|f(U_{n})\|_{(V\cap H^{2})^{*}}dr \leq C (t-s)^{\frac{1}{4}}\left(\mathbb{E}\int_{0}^{T}(1+\|U_{n}\|_{V}^{\frac{8}{3}} +\|U_{n}\|_{W^{1,4}}^{4})dt\right)^{\frac{3}{4}}.
\end{align*}
Therefore, the estimates \eqref{E.1} and \eqref{E.14} imply
\begin{equation*}
\mathbb{E}\sup_{s,t\in[0,T], s\neq t}\dfrac{\int_{s}^{t}\|f(U_{n})\|_{(V\cap H^{2})^{*}}dr}{(t-s)^{\frac{1}{4}}}\leq C.
\end{equation*}
It follows then for any $\delta_{1}\in (0,\frac{1}{4}]$
\begin{equation}
\mathbb{E}\|\text{det}(U_{n})\|_{\mathcal{C}^{0,\delta_{1}}([0,T],H)}\leq C. \label{DetU}
\end{equation}

Following \cite[Lemma 2.1]{flandoli1995martingale}, the stochastic term can be estimated in a suitable fractional Sobolev space. Namely, using the Burkholder-Davis-Gundy inequality, we obtain for any $s \in [0,t]$
\begin{align*}
\mathbb{E}\left\|\sum_{k=1}^{\infty}\int_{s}^{t}P_n\tilde{\sigma}_{n}^k d\beta_{k}(r)\right\|_{2}^{p} &\leq \mathbb{E}\sup_{\tau \in [s,t]}\left\|\sum_{k=1}^{\infty}\int_{s}^{\tau}P_n\tilde{\sigma}_{n}^k d\beta_{k}(r)\right\|_{2}^{p} \leq C \mathbb{E}\left(\int_{s}^{t}\sum_{k=1}^{\infty}\|P_n\tilde{\sigma}_{n}^{k}\|_{2}^{2}dr\right)^{\frac{p}{2}} \notag\\ &
\leq C \mathbb{E}\left(\int_{s}^{t}\sum_{k=1}^{\infty}\|\tilde{\sigma}_{n}^{k}\|_{2}^{2}dr\right)^{\frac{p}{2}}
\leq C(t-s)^{\frac{p}{2}-1}\mathbb{E}\int_{s}^{t}\left(\sum_{k=1}^{\infty}\|\tilde{\sigma}_{n}^{k}\|_{2}^{2}\right)^{\frac{p}{2}}dr.
\end{align*}
Therefore, using \eqref{Stokes1ineq} and \eqref{growth2}, we obtain for any $ s \in [0,t] $
\begin{align*}
\mathbb{E}\left\|\sum_{k=1}^{\infty}\int_{s}^{t}P_n\tilde{\sigma}_{n}^k d\beta_{k}(r)\right\|_{2}^{p} &\leq  C(t-s)^{\frac{p}{2}-1}\mathbb{E}\int_{s}^{t}\left(\sum_{k=1}^{\infty}\|\sigma^{k}(r,U_{n})\|_{2}^{2}\right)^{\frac{p}{2}}dr \\ \notag 
& \leq C(t-s)^{\frac{p}{2}-1}\mathbb{E}\int_{s}^{t}(1+\|U_{n}\|_{V}^{2})^{\frac{p}{2}}dr
\leq C(t-s)^{\frac{p}{2}}(1+\mathbb{E}\sup_{r\in [s,t]}\|U_{n}\|_{V}^{p}).
\end{align*}
Integrating twice over the time interval $[0,T]$, we obtain for $\gamma \in (0,\frac{1}{2})$
\begin{align*}
\mathbb{E}\int_{0}^{T}\int_{0}^{T}\dfrac{\|\sum_{k=1}^\infty\int_{s}^{t}P_n\tilde{\sigma}_{n}^k d\beta_{k}(r)\|_{2}^{p}}{\vert t-s\vert^{1+\gamma p}}dsdt &\leq C (1+\mathbb{E}\sup_{t\in [0,T]}\|U_{n}\|_{V}^{p})\int_{0}^{T}\int_{0}^{T}\vert t-s\vert^{\frac{p}{2}-1-\gamma p}dsdt \leq C.
\end{align*}
Notice that it is straightforward to check that 
\begin{equation*}
\int_{0}^{T}\int_{0}^{T}\vert t-s\vert^{\frac{p}{2}-1-\gamma p}dsdt = \dfrac{2}{\frac{p}{2}-\gamma p}\int_{0}^{T}t^{\frac{p}{2}-\gamma p}dt < \infty.
\end{equation*}
Thus, for $\gamma\in(0,\frac{1}{2})$
\begin{equation*}
\mathbb{E}\|\text{stoch}(U_{n})\|^{p}_{W^{\gamma,p}(0,T; H)}\leq C. 
\end{equation*}
Given $p> 2$, recall from $e.g.$ \cite[Section 2] {flandoli1995martingale}, that
$W^{\gamma,p}(0,T;H) \hookrightarrow \mathcal{C}^{0,\delta_{2}}([0,T],H)$ \text{for} $\delta_{2} \in (0,\gamma p-1).$ Then, for any $\gamma \in (0,\frac{1}{2})$ such that $\gamma p>1$, we have for $\delta_{2}\in (0,\gamma p-1)$
\begin{equation}
\mathbb{E}\|\text{stoch}(U_{n})\|^{p}_{\mathcal{C}^{0,\delta_{2}}([0,T], H)}\leq C. \label{StochU}
\end{equation}
Combining the estimates \eqref{NormC}, \eqref{DetU} and \eqref{StochU}, we deduce that there exists $\delta= \min(\delta_{1},\delta_{2})$ such that
\begin{equation}
\mathbb{E}\|U_{n}\|_{\mathcal{C}^{0,\delta}([0,T], H)}\leq C.\label{HolderU}
\end{equation}
Therefore, $U_{n}$ is bounded in $L^{1}(\Omega, \mathcal{C}^{0,\delta}([0,T],H))$.
\end{proof}

\subsection{Compactness criteria}
Define the stochastic process $\Upsilon_{n}(t) := (\mathcal{B}(t), U_{n}(t),U_{n}(0))$
and the space $\Xi := C([0,T],H)\times V.$ The trajectories of $\Upsilon_{n}(t)$ belong to the product space $C([0,T],\mathcal{U}_{1})\times \Xi.$

Let $\mathcal{L}_{n}:=\mathcal{L}(\Upsilon_{n}),$ $n \in \mathbb{N}$, given as
$$\mathcal{L}_{n}(\mathcal{A}) = \mathbb{P}(\Upsilon_{n} \in \mathcal{A}), \phantom{x} \text{for any Borel set} \phantom{x} \mathcal{A} \in \mathfrak{B}(C([0,T],\mathcal{U}_{1})\times \Xi),$$
to be the law of the random variable $\Upsilon_{n}: \Omega \to C([0,T],\mathcal{U}_{1}) \times \Xi.$ Next, we show the tightness of the family of laws $\lbrace\mathcal{L}_n\rbrace_{n\in \mathbb{N}}.$ In particular, for $n \in \mathbb{N}$, denote by $\mathcal{L}_{\mathcal{B}},$ $\mathcal{L}_{U_{n}}$ and $\mathcal{L}_{U_{n,0}},$ the laws of $\mathcal{B}$, $U_{n}$ and $U_{n}(0)$ on $C([0,T], \mathcal{U}_{1})$, $C([0,T],H)$ and $V$ respectively.\vspace{0.2cm}

Let $\theta > 0,$ since $\mathcal{L}_{\mathcal{B}}:= \mathcal{L}(\mathcal{B})$ is a probability measure on $C([0,T],\mathcal{U}_{1})$, then it defines a Radon measure on $C([0,T], \mathcal{U}_{1})$. Hence, there exists a compact set $S_{L_\theta} \subset C([0,T], \mathcal{U}_{1})$ such that $\mathcal{L}_{\mathcal{B}}(S_{L_\theta})\geq 1-\frac{\theta}{3}.$\vspace{0.1cm}

For any $M >0$, consider the set 
$$S_{M}:= \left\{u\in L^p(0,T;V)\cap \mathcal{C}^{0,\delta}([0,T],H): \|u\|_{\delta,p, V, H}\leq M\right\},$$ 
where $\|\cdot\|_{\delta,p,V,H}$ is defined by \eqref{norm.com}, for $\delta$ given as in \eqref{HolderU}. Lemma \ref{C.2} ensures the compact embedding 
$L^{p}(0,T;V) \cap \mathcal{C}^{0,\delta}([0,T],H) \hookrightarrow C([0,T],H).$
Therefore, the set $S_M$ is a relatively compact subset of $C([0,T],H).$ Moreover, by \eqref{E.1} and \eqref{estimate2} we have
$$\mathbb{P}(U_{n}\notin S_M)=\mathbb{P}(\|U_{n}\|_{\delta,p,V,H}> M)\leq \frac{1}{M^2}\mathbb{E}\|U_{n}\|_{\delta,p,V,H}^2\leq \frac{C}{M^2}.$$
Therefore, we choose $M_\theta$ large enough such that $\mathbb{P}(U_{n}\notin S_{M_\theta})\leq \frac{C}{M_\theta^2}<\frac{\theta}{3}.$
Thus, $\mathcal{L}_{U_{n}}(S_{M_{\theta}}) \geq 1-\frac{\theta}{3},$ $\forall n \in \mathbb{N}$.

Since $U_{n}(0)$ converges strongly to $U_{0}$ in $L^{p}(\Omega,V)$, then $U_{n}(0)$ converges to $U_{0}$ in law in $V$ and by Prokhorov's theorem there exists a compact set $S_{N_\theta} \subset V$, such that $\mathcal{L}_{U_{n,0}}(S_{N_\theta}) \geq 1-\frac{\theta}{3}$, $\forall n \in \mathbb{N}.$

Consequently, we deduce the existence of a relatively compact set
$$S_{L_\theta}\times S_{M_\theta} \times S_{N_\theta}
\in \mathfrak{B}(C([0,T],\mathcal{U}_{1})\times \Xi)$$
such that $\mathcal{L}_{n}(S_{N_\theta}\times S_{M_\theta}\times S_{\theta})\geq 1-\theta$, $\forall n\in \mathbb{N}$, and thus, the tightness of the family $\{\mathcal{L}_{n}\}_{n\in \mathbb{N}}$ on the space $C([0,T],\mathcal{U}_{1})\times \Xi$ holds. Hence, by Prokhorov's Theorem, we deduce the existence of a subsequence  of $\{\mathcal{L}_{n}\}_{n\in \mathbb{N}},$ still denoted $\{\mathcal{L}_{n}\}_{n \in \mathbb{N}},$ that converges weakly to a probability measure $\mathcal{L}$ on $C([0,T],\mathcal{U}_{1})\times \Xi$.

\subsection{Convergence results}
\subsubsection{Step 1} 
Since $\mathcal{L}_{n} \rightarrow \mathcal{L}$ weakly as $n \to \infty$, then by the modified version of Skorokhod's Theorem \cite[Theorem C.1]{brzezniak2018stochastic}, there exist a probability space $(\widetilde{\Omega}, \widetilde{\mathcal{F}},\widetilde{\mathbb{P}})$, and families of random variables $(\widetilde{\mathcal{B}}_{n},\widetilde{U}_{n},\widetilde{U_{n,0}}),$ $(\widetilde{\mathcal{B}},\widetilde{U}, \widetilde{U_{0}})$ defined on $(\widetilde{\Omega}, \widetilde{\mathcal{F}},\widetilde{\mathbb{P}})$ with values in the space $C([0,T],\mathcal{U}_{1})\times \Xi$, such that
\begin{enumerate}
\item[(i)] the law of $(\widetilde{\mathcal{B}}_{n},\widetilde{U}_{n},\widetilde{U_{n,0}})$ is $\mathcal{L}_{n},$ 
\item[(ii)] the law of $(\widetilde{\mathcal{B}},\widetilde{U},\widetilde{U_{0}})$ is $\mathcal{L},$
\item[(iii)] we have $\widetilde{\mathbb{P}}$-a.s.,
\begin{equation}
(\widetilde{\mathcal{B}}_{n},\widetilde{U}_{n},\widetilde{U_{n,0}}) \rightarrow (\widetilde{\mathcal{B}},\widetilde{U},\widetilde{U_{0}}) \quad \text{on} \phantom{x} C([0,T],\mathcal{U}_{1})\times \Xi, \label{convstep1}
\end{equation}
\item[(iv)] $\widetilde{\mathcal{B}}_{n}(\widetilde{\omega}) = \widetilde{\mathcal{B}}(\widetilde{\omega})$, for all $\widetilde{\omega} \in \widetilde{\Omega}$.
\end{enumerate}

Denote by $\widetilde{\mathbb{E}}$ the mathematical expectation under the probability measure $\widetilde{\mathbb{P}}$. Now, consider the augmentation of the following filtration $\{\bar{\mathcal{F}}_{t}\}_{t \in [0,T]}$,
\begin{equation*}
\bar{\mathcal{F}}_{t}:= \sigma\bigl(\widetilde{U}(s), \widetilde{\mathcal{B}}(s),  0\leq s\leq t\bigr),
\end{equation*}
that will be denoted $\{\widetilde{\mathcal{F}}_t\}:=\{\widetilde{\mathcal{F}}_{t}\}_{t \in [0,T]}$. In order to ensure that the stochastic process $(\widetilde{\mathcal{B}}(t))_{t \in [0,T]}$ is an $\{\widetilde{\mathcal{F}}_{t}\}$-Wiener process and that the stochastic integral is well-defined, we will check the axioms of the Lévy's theorem \cite[Proposition 3.11]{da2014stochastic}, namely, $(\widetilde{\mathcal{B}}(t))_{t \in [0,T]}$ is an $\{\widetilde{\mathcal{F}}_{t}\}$-martingale and $\langle \widetilde{\mathcal{B}}, \widetilde{\mathcal{B}}\rangle_{t} = tQ,$ for all $t \in [0,T],$ where $Q$ is the covariance operator of $\mathcal{B}$ (see Subsection \ref{Subsection2.2}).

It is obvious that $\widetilde{\mathcal{B}}$ is $\{\widetilde{\mathcal{F}}_{t}\}$-adapted and $\widetilde{\mathbb{P}}$-a.s. continuous. 
 Moreover, by (iv), the equality of law (i) and since $(\mathcal{B}(t))_{t \in [0,T]}$ is an $\{\mathcal{F}_{t}\}$-Wiener process, we obtain
\begin{equation*}
\widetilde{\mathbb{E}}\vert\widetilde{\mathcal{B}}(0)\vert = \widetilde{\mathbb{E}}\vert\widetilde{\mathcal{B}}_{n}(0)\vert =\mathbb{E}\vert \mathcal{B}(0)\vert = 0,
\end{equation*}
then, $\widetilde{\mathcal{B}}(0) = 0$ a.s. in $\widetilde{\Omega}$.\\
Consider a bounded and continuous functional $\upsilon \in C_{b}(C([0,s],\mathcal{U}_{1}) \times C([0,s],H))$, $0\leq s \leq t \leq T$. For any $\mathfrak{b} \in \mathcal{U}_{1}$, thanks to (iv) and (i), we write 
\begin{align*}
\widetilde{\mathbb{E}}[(\widetilde{\mathcal{B}}(t)-\widetilde{\mathcal{B}}(s),\mathfrak{b})_{\mathcal{U}_{1}}\upsilon(\widetilde{\mathcal{B}},\widetilde{U})] &= \lim_{n \to \infty}\widetilde{\mathbb{E}}[(\widetilde{\mathcal{B}}(t)-\widetilde{\mathcal{B}}(s),\mathfrak{b})_{\mathcal{U}_1}\upsilon(\widetilde{\mathcal{B}},\widetilde{U}_{n})]
= \lim_{n \to \infty}\mathbb{E}[(\mathcal{B}(t)-\mathcal{B}(s),\mathfrak{b})_{\mathcal{U}_1}\upsilon(\mathcal{B},U_{n})] = 0.
\end{align*}
Notice that in the last equality we used the fact that the stochastic process $U_{n}$ is $\{\mathcal{F}_{t}\}$-adapted and $\mathcal{B}$ is an $\{\mathcal{F}_{t}\}$-martingale.
Thus, $\widetilde{\mathcal{B}}$ is an $\{\widetilde{\mathcal{F}}_{t}\}$-martingale.\\ 
Now, let $\lbrace e_{i}\rbrace_{i \in \mathbb{N}}$ be an orthonormal basis of $\mathcal{U}_{1}$. For all $t,s \in [0,T]$, $0\leq s \leq t$ and $l,m \in \mathbb{N}$, again owing to (iv) and (i), we find
\begin{align*}
\widetilde{\mathbb{E}}&[((\widetilde{\mathcal{B}}(t),e_{l})_{\mathcal{U}_1}(\widetilde{\mathcal{B}}(t),e_{m})_{\mathcal{U}_1}- (\widetilde{\mathcal{B}}(s),e_{l})_{\mathcal{U}_{1}}(\widetilde{\mathcal{B}}(s),e_{m})_{\mathcal{U}_{1}}-((t-s)Qe_{l},e_{m}))\upsilon(\widetilde{\mathcal{B}},\widetilde{U})] \notag\\
&= \lim_{n \to \infty}\widetilde{\mathbb{E}}[((\widetilde{\mathcal{B}}(t),e_{l})_{\mathcal{U}_{1}}(\widetilde{\mathcal{B}}(t),e_{m})_{\mathcal{U}_{1}} -(\widetilde{\mathcal{B}}(s),e_{l})_{\mathcal{U}_{1}}(\widetilde{\mathcal{B}}(s),e_{m})_{\mathcal{U}_{1}} - ((t-s)Qe_{l},e_{m})) \upsilon(\widetilde{\mathcal{B}},\widetilde{U}_{n})]\notag\\ 
&= \lim_{n\to \infty}\mathbb{E}[((\mathcal{B}(t),e_{l})_{\mathcal{U}_{1}}(\mathcal{B}(t),e_{m})_{\mathcal{U}_{1}}-(\mathcal{B}(s),e_{l})_{\mathcal{U}_{1}}(\mathcal{B}(s),e_{m})_{\mathcal{U}_{1}}-((t-s)Qe_{l},e_{m}))\upsilon(\mathcal{B},U_{n})]= 0,
\end{align*}
where we used similar arguments as above besides the fact that $\langle\mathcal{B},\mathcal{B}\rangle_{t} = tQ$, for all $t \in [0,T]$. It follows by Lévy's theorem \cite{da2014stochastic}, that the stochastic process $\widetilde{\mathcal{B}}(t)$, $t \in [0,T]$ defined on the stochastic basis $\bigl(\widetilde{\Omega},\widetilde{\mathcal{F}},\{\widetilde{\mathcal{F}}_{t}\}, \widetilde{\mathbb{P}})$ is a $Q$-Wiener process with respect to the filtration $\{\widetilde{\mathcal{F}}_{t}\}_{t\in [0,T]}.$ 
\begin{rmq}
From the definition of the filtration $\lbrace\widetilde{\mathcal{F}}_{t}\rbrace_{t\in [0,T]},$ the process $\widetilde{U}(t)$ is $\{\widetilde{\mathcal{F}}_{t}\}$-adapted for any $t \in [0,T]$, thus $\widetilde{U}(0)$ is $\widetilde{\mathcal{F}}_{0}$-measurable.
\end{rmq}

Let $A := A(u).$ For the sake of brevity, set
\begin{align}
g(u) := \mu\Delta u - u\cdot\nabla u  &+ \alpha_{1}\text{div}(u\cdot \nabla A+(\nabla u)^{T}A+A(\nabla u))+\alpha_{2}\text{div}(A^{2}) + \beta\text{div}(\vert A\vert^{2}A).\label{simpl}
\end{align}

Let $A_{n} := A(U_{n})$ (resp. $\widetilde{A}_{n}:=A(\widetilde{U}_{n})$). Since the following equation is  satisfied by $U_{n}$
\begin{align}
\left(U_n(t)-\alpha_{1}\Delta U_{n}(t) ,\text{v}_j \right) & =\left(U_{n}(0)-\alpha_{1}\Delta U_{n}(0) ,\text{v}_j \right)+\int_{0}^{t}\bigl(g(U_{n}), \text{v}_j \bigr)
\,ds+\int_{0}^{t}\left(\Phi(s,U_n),\text{v}_j \right)\,ds
	\notag\\
	&
\quad+\,\int_{0}^{t}\left(\sigma(s,U_{n}),\text{v}_j \right) \,d\mathcal{B}(s),\qquad \text{for} \phantom{x} j=1, \cdots n,
	\label{Uk}
\end{align}
thanks to the axioms (i) and (iv), one can verify that the process $\widetilde{U}_{n}$ satisfies the following equation $\widetilde{\mathbb{P}}$-a.s.
\begin{align}
(\widetilde{U}_n(t)-\alpha_{1}\Delta\widetilde{U}_{n}(t),\text{v}_j) =&(\widetilde{U_{n,0}}-\alpha_{1}\Delta \widetilde{U_{n,0}},\text{v}_j )+\int_{0}^{t}
\bigl(g(\widetilde{U}_{n}), \text{v}_j \bigr)\,ds+\int_{0}^{t}
\left(\Phi(s,\widetilde{U}_n),\text{v}_j \right)\,ds
	\notag\\
	&
+\,\int_{0}^{t}\left(\sigma(s,\widetilde{U}_{n}),\text{v}_j\right) \,d\widetilde{\mathcal{B}}(s),\qquad \text{for} \phantom{x} j=1,\cdots, n.
	\label{barUk}
\end{align}
For a detailed reasoning we refer $e.g.$ to \cite{bensoussan1995stochastic, razafimandimby2010weak}.
\begin{rmq}
It is easy to notice that from the equation \eqref{barUk}, we have $\widetilde{U_{n,0}} = \widetilde{U}_{n}(0).$
\end{rmq}

\subsubsection{Step 2}
Since the equation \eqref{barUk} holds, proceeding as in the proofs of Lemmas \ref{estimate3}, \ref{estimate1} and \ref{lemma.4}, we deduce the existence of a positive constant $C$ independent of $n$, such that
\begin{align}
&\widetilde{\mathbb{E}}\sup_{s\in [0, t]}\|\widetilde{U}_{n}(s)\|_{V}^{2}+\dfrac{\beta}{2}\widetilde{\mathbb{E}}\int_{0}^{t}\|\widetilde{A}_{n}\|_{4}^{4}ds + 2\mu \widetilde{\mathbb{E}}\int_{0}^{t}\|\nabla \widetilde{U}_{n}\|_{2}^{2}ds \leq C (1 + \widetilde{\mathbb{E}}\|\widetilde{U}(0)\|_{V}^{2}),  \label{tildeE.v}
\\ &
\widetilde{\mathbb{E}}\|\widetilde{U}_{n}\|_{L^{4}(0,t;W^{1,4}(\mathcal{D}))}^{4} \leq C (1 + \widetilde{\mathbb{E}}\|\widetilde{U}(0)\|_{V}^{2}), \label{tildeE.14}
\\&
\widetilde{\mathbb{E}}\sup_{s\in [0, t]}\|\widetilde{U}_{n}(s)\|_{V}^{p} \leq C (1 + \widetilde{\mathbb{E}}\|\widetilde{U}(0)\|_{V}^{p}), \label{tildeE.1}
\\& 
\widetilde{\mathbb{E}}\|\widetilde{U}_{n}\|_{\mathcal{C}^{0,\delta}(0,T;H)}\leq C,
\label{tildeHolder}
\end{align}
for all $t \in [0,T]$, where $\delta = \min(\delta_{1},\delta_{2})$, $\delta_{1}\in (0,\frac{1}{4}]$ and $\delta_{2}\in (0,\gamma p-1)$, for $\gamma \in (0,\frac{1}{2})$.\\ 

The estimate \eqref{tildeE.1} implies 
\begin{equation}
\widetilde{\mathbb{E}}\sup_{t \in[0,T]}\|\widetilde{U}_n\|^q_{2}\leq C, \quad\text{for}\quad q\leq p. \label{E.4}
\end{equation}

Recall from (iii) in $Step$ $1$, that $\widetilde{U}_n$ converges to $\widetilde{U}$ strongly in $C([0,T],H),$ $\widetilde{\mathbb{P}}$-a.s., then $\widetilde{U}_n$ converges to $\widetilde{U}$ strongly in $L^{q}(0,T;H),$ $\widetilde{\mathbb{P}}$-a.s. and
\begin{equation}
\|\widetilde{U}_n(t)\|^q_{2}\to \|\widetilde{U}(t)\|^q_{2}\quad \widetilde{\mathbb{P}}-\text{a.s.}, \phantom{x} \forall t \in [0,T],\quad q < \infty.\label{limt}
\end{equation}
Given \eqref{E.4} and \eqref{limt}, Fatou's Lemma yields
\begin{equation}
\widetilde{\mathbb{E}} \sup_{t\in [0,T]}\|\widetilde{U}\|^q_{2}\leq C, \quad\text{for}\quad q\leq p.	\label{E.5}
\end{equation}
Taking into account \eqref{E.4} and \eqref{E.5}, we deduce that for $q<p$, the sequence $\{\|\widetilde{U}_n-\widetilde{U}\|^q_{2}\}_{n}$ is uniformly integrable in $L^1(\widetilde{\Omega})$ and converges to zero in probability. Thus, by the Vitali convergence theorem, we obtain
\begin{equation}
\widetilde{U}_n\to \widetilde{U} \quad \text{strongly in} \quad L^q(\widetilde{\Omega},L^q(0,T;H)) \quad\text{for} \quad  q<p.\label{converge1}
\end{equation}
Moreover, one can extract a subsequence of $\{\widetilde{U}_n\}_{n}$, still denoted $\{\widetilde{U}_n\}_{n},$ such that
\begin{equation}
\widetilde{U}_n(t) \rightarrow \widetilde{U}(t) \quad \text{strongly in}\quad  H, \quad \widetilde{\mathbb{P}}-\text{a.s.,} \quad \forall t\in [0,T].\label{converge2}
\end{equation}

Furthermore, by \eqref{tildeE.1} and \eqref{tildeE.14} we have
\begin{align}
&\widetilde{U}_n \rightharpoonup \widetilde{U}\quad\text{weakly in} \quad  L^p(\widetilde{\Omega}, L^p(0,T; V))\cap L^4(\widetilde{\Omega}, L^{4}(0,T;\mathbf{W}^{1,4}(\mathcal{D}))), \label{converge3}
\\ &
\widetilde{U}_n\rightharpoonup \widetilde{U}\quad\text{weakly-* in} \quad  L^p(\widetilde{\Omega}, L^\infty_{w}(0,T; V)). \label{convw*}
\end{align}
\begin{rmq}
Notice that, since $\widetilde{U}$ belongs to the spaces $L^p(\widetilde{\Omega}, L^\infty_{w}(0,T; V))$ and $L^p(\widetilde{\Omega}, C([0,T], H))$, then $\widetilde{\mathbb{P}}-a.s.$ $\widetilde{U} \in C_{w}([0,T], V))$, where the latter denotes the space of weakly continuous $V$-valued functions. 
\end{rmq}
Taking into account (i) and (iii) in $Step$ $1$, we deduce that 
\begin{equation*}
\widetilde{\mathbb{E}}\|\widetilde{U_{n,0}}\|_{V}^{p} \leq C,
\end{equation*}
that is $\widetilde{U_{n,0}}$ belongs to the space $L^{p}(\widetilde{\Omega},V)$. Thus, by the Vitali convergence theorem  we have
\begin{equation}
\widetilde{U_{n,0}} \rightarrow \widetilde{U_{0}} \quad \text{strongly in} \phantom{x} L^{q}(\widetilde{\Omega},V), \quad \text{for}\quad q< p. 
\end{equation}
Therefore, $\mathcal{L}(\widetilde{U_{n,0}})$ converges to $\mathcal{L}(\widetilde{U_{0}})$ in $V$, and since $U_{n,0}$ converges strongly to $U_{0}$ in $L^{q}(\Omega,V),$ $q\leq p$, then $\mathcal{L}(U_{n,0})$ converges to $\mathcal{L}(U_{0})$ in $V$, and by uniqueness of the limit, we deduce that 
$$\mathcal{L}(\widetilde{U_{0}}) = \mathcal{L}(U_{0}).$$

\subsection{Monotonicity property}

We rewrite the equation \eqref{barUk} in its explicit form 
\begin{align}
\left(\widetilde{U}_n(t)-\alpha_{1}\Delta \widetilde{U}_{n},\text{v}_j \right) & =\left(\widetilde{U_{n,0}}-\alpha_{1}\Delta \widetilde{U_{n,0}},\text{v}_j \right) +\int_{0}^{t}\bigl(\mu \Delta \widetilde{U}_{n} -\widetilde{U}_{n}\cdot \nabla \widetilde{U}_{n}+ \alpha_{1}\text{div}(\widetilde{U}_{n}\cdot\widetilde{A}_{n})
    \notag\\
    &	
+\alpha_{1}\text{div}((\nabla \widetilde{U}_{n})^{T}\widetilde{A}_{n}+\widetilde{A}_{n}(\nabla \widetilde{U}_{n}))+\alpha_{2}\text{div}(\widetilde{A}_{n}^{2})+\beta \text{div}(\vert \widetilde{A}_{n}\vert^{2}\widetilde{A}_{n}), \text{v}_j \bigr)\,ds 
\notag\\
&+\int_{0}^{t}\left(\Phi(s,\widetilde{U}_n),\text{v}_j \right)\,ds+\,\int_{0}^{t}\left(\sigma(s,\widetilde{U}_{n}),\text{v}_j \right) \,d\widetilde{\mathcal{B}}(s),
	\quad \text{for} \phantom{x} j= 1,\cdots, n.
	\label{tilde.U}
\end{align}

Let us now define the non-linear operator $\mathcal{Q} :V\cap \mathbf{W}^{1,4}(\mathcal{D}) \ni u \mapsto \mathcal{Q}(u) \in (V\cap \mathbf{W}^{1,4}(\mathcal{D}))^*$, as follows 
\begin{equation}
	\label{monoton}
\mathcal{Q}(u) = -\mu \Delta u - \alpha_{1}\text{div}((\nabla u)^{T}A+A(\nabla u))-\alpha_{2}\text{div}(A^{2})-\beta \text{div}(\vert A\vert^{2}A).
\end{equation}
An application of the divergence theorem yields, for $u, y \in V\cap \mathbf{W}^{1,4}(\mathcal{D})$
\begin{equation*}
(\mathcal{Q}(u),y)_{(V\cap W^{1,4})^{*},V\cap W^{1,4}} = \int_{\mathcal{D}} \bigl(\mu \nabla u + \alpha_{1}((\nabla u)^{T}A+A(\nabla u)) +\alpha_{2}A^{2}+ \beta \vert A\vert^{2}A\bigr). \nabla y dx.
\end{equation*}
\begin{lem}(\cite[Lemma 4]{busuioc2008steady})
\label{mon}
Assume that \eqref{RESTMON} holds. Then, the operator $\mathcal{Q}$ is monotone in the following sense
\begin{equation}
(\mathcal{Q}(u)-\mathcal{Q}(y), u-y)_{(V\cap W^{1,4})^{*},V\cap W^{1,4}}\geq 0, \quad \text{for all} \quad u, y \in V\cap \mathbf{W}^{1,4}(\mathcal{D}). \label{monotonicity}
\end{equation}
\end{lem}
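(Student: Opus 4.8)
The plan is to reduce \eqref{monotonicity} to a pointwise algebraic inequality for symmetric matrices, and then to absorb the $\alpha_1$- and $\alpha_2$-contributions into the $\mu$- and $\beta$-terms by Young's inequality, with the exponents tuned so that \eqref{RESTMON} is exactly the condition that makes the bound work.

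First, using the integral representation of the pairing displayed just before the statement, and writing $w:=u-y$, $A:=A(u)$, $B:=A(y)$, $D:=A-B=A(w)$ (the map $u\mapsto A(u)$ being linear), one has $(\mathcal{Q}(u)-\mathcal{Q}(y),u-y)_{(V\cap W^{1,4})^{*},V\cap W^{1,4}}=\int_{\mathcal{D}}\Theta\,dx$, where $\Theta$ is the pointwise sum of $\mu\nabla w$, $\alpha_{1}\big((\nabla u)^{T}A+A\nabla u-(\nabla y)^{T}B-B\nabla y\big)$, $\alpha_{2}(A^{2}-B^{2})$ and $\beta(|A|^{2}A-|B|^{2}B)$, each contracted with $\nabla w$. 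Since the three matrix combinations other than $\nabla w$ are all \emph{symmetric}, contracting any of them with $\nabla w$ coincides with $\tfrac12$ times its contraction with $D$ (twice the symmetric part of $\nabla w$); and $|\nabla w|^{2}=\tfrac14|D|^{2}+\tfrac14|\nabla w-(\nabla w)^{T}|^{2}\ge\tfrac14|D|^{2}$. So it suffices to prove that $\Theta\ge0$ pointwise.

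Next I would record the algebraic identities that make the sign of $\Theta$ manageable: for symmetric $A,B$,
\begin{equation*}
(|A|^{2}A-|B|^{2}B)\cdot(A-B)=\tfrac12\big(|A|^{2}-|B|^{2}\big)^{2}+\tfrac12\big(|A|^{2}+|B|^{2}\big)|A-B|^{2}\ \ge\ \tfrac14|D|^{4}\ \ge\ 0,
\end{equation*}
and, using $A^{2}-B^{2}=AD+DB$ together with $(AD)\cdot D=A\cdot D^{2}$ and $(DB)\cdot D=B\cdot D^{2}$,
\begin{equation*}
(A^{2}-B^{2})\cdot D=(A+B)\cdot D^{2},\qquad |(A^{2}-B^{2})\cdot D|\le(|A|+|B|)\,|D|^{2}.
\end{equation*}
A similar but longer manipulation, splitting $\nabla u,\nabla y,\nabla w$ into symmetric and skew parts, rewrites $\big((\nabla u)^{T}A+A\nabla u-(\nabla y)^{T}B-B\nabla y\big)\cdot D$ as $(A+B)\cdot D^{2}$ plus a commutator term whose modulus is $\le C(|A|+|B|)\,|D|\,|\nabla w|$.

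Collecting these, $\Theta$ is bounded below by a quadratic form in $|D|$, $|\nabla w-(\nabla w)^{T}|$ and $\sqrt{|A|^{2}+|B|^{2}}\,|D|$, with positive weights coming from $\mu$ and $\beta$ and indefinite weights from $\alpha_{1},\alpha_{2}$; applying Young's inequality to each $\alpha_{1}$- and $\alpha_{2}$-cross term, paying part of it with $(|A|^{2}+|B|^{2})|D|^{2}$ (out of $\beta$) and part with $|\nabla w|^{2}$ (out of $\mu$), and optimising the splitting, the resulting sufficient condition for $\Theta\ge0$ is precisely $3\alpha_{1}^{2}+4(\alpha_{1}+\alpha_{2})^{2}\le24\mu\beta$, i.e.\ \eqref{RESTMON}; integrating over $\mathcal{D}$ then yields \eqref{monotonicity}. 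This is exactly the quantitative computation of \cite[Lemma~4]{busuioc2008steady}. The main obstacle is this last step: recovering the \emph{sharp} constants $3$, $4$, $24$ forces one to retain \emph{all} the nonnegative pieces — notably $\tfrac12(|A|^{2}-|B|^{2})^{2}$ and $\tfrac14|D|^{4}$ — and to choose the Young parameters optimally, whereas a crude estimate only gives monotonicity under a strictly stronger restriction; for this reason the bookkeeping is best invoked from \cite{busuioc2008steady} rather than reproduced.
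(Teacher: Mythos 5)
The paper offers no proof of this lemma at all --- it is stated as a direct citation of \cite[Lemma 4]{busuioc2008steady} --- and your proposal, after a correct reduction to the pointwise inequality $\Theta\ge 0$ (using the symmetry of the three matrix expressions to replace $\nabla w$ by $\tfrac12 D$) and correct algebraic identities for the cubic and quadratic differences, likewise defers the decisive Young-inequality bookkeeping that produces the constants $3$, $4$, $24$ of \eqref{RESTMON} to that same reference. Your outline is therefore sound and consistent with the cited argument, but relative to the paper (which simply cites) there is nothing substantive to compare: both ultimately rest on \cite{busuioc2008steady} for the quantitative step.
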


Therefore, the equation \eqref{tilde.U} becomes of the form
 \begin{align}
\left( \widetilde{U}_n(t)-\alpha_{1}\Delta \widetilde{U}_{n}(t),\text{v}_j \right)  &=\left( \widetilde{U_{n,0}}-\alpha_{1}\Delta \widetilde{U_{n,0}} ,\text{v}_j \right)+\int_{0}^{t}\bigl(-\mathcal{Q}(\widetilde{U}_{n})-\widetilde{U}_{n}\cdot\nabla\widetilde{U}_{n} +\alpha_{1} \text{div}(\widetilde{U}_{n}\cdot\nabla\widetilde{A}_{n}), \text{v}_j \bigr)\,ds
\notag\\ &+\int_{0}^{t}\left(\Phi(s,\widetilde{U}_n),\text{v}_j \right)\,ds+\int_{0}^{t}\left( \sigma(s,\widetilde{U}_{n}),\text{v}_j \right) \,d\widetilde{\mathcal{B}}(s), \quad \text{for} \phantom{x} j=1,\cdots, n.\label{eq.monotone}
\end{align}

\subsection{Proof of Theorem \ref{main.result}}
This subsection consists of the last step in the proof of Theorem \ref{main.result}. Namely, we will pass to the limit the terms of equation \eqref{eq.monotone}. First of all, we regroup the terms of equation \eqref{eq.monotone} into three expressions, and for each expression we will follow an appropriate way to pass to the limit. Set for $j=1, \cdots, n$ and $t \in [0,T]$
\begin{align*}
E_{n} &:= \bigl(\widetilde{U}_n(t)-\alpha_{1}\Delta\widetilde{U}_{n}(t) ,\text{v}_j \bigr) - \bigl(\widetilde{U_{n,0}}-\alpha_{1}\Delta \widetilde{U_{n,0}} ,\text{v}_j \bigr)+\int_{0}^{t} \left(\widetilde{U}_{n}\cdot\nabla\widetilde{U}_{n}- \alpha_{1}\text{div}(\widetilde{U}_{n}\cdot \nabla \widetilde{A}_{n}), \text{v}_j \right)ds,\\
F_{n} &:= \int_{0}^{t}\bigl(\Phi(s,\widetilde{U}_n),\text{v}_j \bigr)\,ds+\int_{0}^{t}\bigl(\sigma(s,\widetilde{U}_{n}),\text{v}_j \bigr) \,d\widetilde{\mathcal{B}}(s),\\
G_{n} &:= \int_{0}^{t} \bigl(\mathcal{Q}(\widetilde{U}_{n}), \text{v}_{j})ds,
\end{align*}
such that $E_{n} - F_{n} + G_{n} = 0$ corresponds to the equation \eqref{eq.monotone}.\vspace{0.1cm}

Let $j = 1, \cdots, n$ and $\mathcal{A}\in \widetilde{\mathcal{F}}$. By using the convergence \eqref{converge2}, we get for any $ t \in [0,T]$
\begin{align}
\int_{\mathcal{A}}\bigl(\widetilde{U}_n(t)-\alpha_{1}\Delta \widetilde{U}_{n}(t) ,\text{v}_{j} \bigr)d\widetilde{\mathbb{P}}&=\int_{\mathcal{A}}\bigl(\widetilde{U}_n(t) ,\text{v}_{j}-\alpha_{1}\Delta (\text{v}_{j}) \bigr)d\widetilde{\mathbb{P}} \notag\\
&\longrightarrow\int_{\mathcal{A}}\bigl(\widetilde{U}(t) ,\text{v}_{j}-\alpha_{1}\Delta  \text{v}_{j} \bigr)d\widetilde{\mathbb{P}} = \int_{\mathcal{A}}\bigl(\widetilde{U}(t)-\alpha_{1}\Delta \widetilde{U}(t) ,\text{v}_{j} \bigr)d\widetilde{\mathbb{P}}.
\label{E1}
\end{align}
The convergence \eqref{convstep1} yields 
\begin{align}
\int_{A}\bigl(\widetilde{U_{n,0}}-\alpha_{1}\Delta \widetilde{U_{n,0}} ,\text{v}_{j} \bigr)d\widetilde{\mathbb{P}}=\int_{\mathcal{A}}\bigl(\widetilde{U_{n,0}} ,\text{v}_{j} \bigr)_V d\widetilde{\mathbb{P}} \longrightarrow \int_{\mathcal{A}}\bigl(\widetilde{U_0} ,\text{v}_{j} \bigr)_{V}d\widetilde{\mathbb{P}} = \int_{\mathcal{A}}\bigl(\widetilde{U_0}-\alpha_{1}\Delta \widetilde{U_0} ,\text{v}_{j} \bigr)d\widetilde{\mathbb{P}}.
\label{E0}
\end{align}
Using the convergences \eqref{converge1} and \eqref{converge3}, we find for all $t\in [0,T]$
\begin{align}
\int_{\mathcal{A}} \int_{0}^{t}\left(\widetilde{U}_{n}\cdot \nabla\widetilde{U}_n,\text{v}_j \right)ds d\widetilde{\mathbb{P}}
\longrightarrow \int_{\mathcal{A}} \int_{0}^{t}\left(\widetilde{U}\cdot \nabla\widetilde{U}, \text{v}_j\right)dsd\widetilde{\mathbb{P}}. 
\label{E2}
\end{align}
On the other hand, we obtain for $d=2,3$
\begin{align*}
(\text{div}&(\widetilde{U}_{n}\cdot \nabla \widetilde{A}_{n}), \text{v}_j) = \int_{\mathcal{D}}\text{div}(\widetilde{U}_{n}\cdot \nabla \widetilde{A}_{n})\cdot \text{v}_j dx
= -\int_{\mathcal{D}}\widetilde{U}_{n}\cdot \nabla \widetilde{A}_{n}\cdot \nabla(\text{v}_j)dx=\int_{\mathcal{D}}\sum_{l,m=1}^{d}\widetilde{U}_{n}^{l} \widetilde{A}_{n}^{m}\cdot \partial_{l}\partial_{m}(\text{v}_j)dx.
\end{align*}
Then, using the convergences \eqref{converge1} and \eqref{converge3}, we deduce for $d=2,3$, and for any $t \in [0,T]$  
\begin{align}
\notag\int_{\mathcal{A}}&\int_{0}^{t}\left(\text{div}(\widetilde{U}_{n}\cdot \nabla \widetilde{A}_{n}), \text{v}_j\right)dsd\widetilde{\mathbb{P}}= \int_{\mathcal{A}}\int_{0}^{t}\int_{\mathcal{D}}\left(\sum_{l,m=1}^{d}\widetilde{U}_{n}^{l} \widetilde{A}_{n}^{m}\cdot \partial_{l}\partial_{m}(\text{v}_j)\right)dxdsd\widetilde{\mathbb{P}}\\ &\longrightarrow \int_{\mathcal{A}}\int_{0}^{t}\int_{\mathcal{D}}\left(\sum_{l,m=1}^{d}\widetilde{U}^{l} \widetilde{A}^{m}\cdot \partial_{l}\partial_{m}\text{v}_j\right)dxdsd\widetilde{\mathbb{P}}=\int_{\mathcal{A}}\int_{0}^{t}\left(\text{div}(\widetilde{U}\cdot \nabla \widetilde{A}), \text{v}_j\right)dsd\widetilde{\mathbb{P}}.\label{E3}
\end{align}
Combining \eqref{E1}, \eqref{E2} and \eqref{E3}, we deduce the convergence of all terms in $E_n$ weakly in $L^{1}(\widetilde{\Omega})$.\vspace{0,1cm}

Taking into account the assumption \eqref{lipschitz1} and the convergence \eqref{converge1}, we find for all $t\in [0,T]$
\begin{equation}
\int_{0}^{t}(\Phi(s,\widetilde{U}_{n}),\text{v}_j) ds \longrightarrow \int_{0}^{t}(\Phi(s,\widetilde{U}),\text{v}_j)ds \quad \text{in} \phantom{x} L^{1}(\widetilde{\Omega}). \label{force}
\end{equation}
In order to pass to the limit the stochastic term, we use the assumption \eqref{lipschitz2} and the convergence \eqref{converge1}, then for all $t\in [0,T]$
\begin{align*}
\widetilde{\mathbb{E}}\left|\int_0^t(\sigma(s, \widetilde{U}_n)-\sigma(s,\widetilde{U}),\text{v}_j)d\widetilde{\mathcal{B}}(s)\right|^{2} &\leq\widetilde{\mathbb{E}}\left(\int_0^t\sum_{k=1}^{\infty}\vert(\sigma^{k}(s, \widetilde{U}_n)-\sigma^{k}(s,\widetilde{U}), \text{v}_j)\vert^{2}ds\right)\\ &\leq 
\widetilde{\mathbb{E}}\left(\int_0^t\sum_{k=1}^{\infty}\|\sigma^{k}(s, \widetilde{U}_n)-\sigma^{k}(s,\widetilde{U})\|_{2}^{2}ds\right)\\ &\leq
C \|\widetilde{U}_{n}-\widetilde{U}\|^{2}_{L^{2}(\widetilde{\Omega},L^{2}(0,T;H))}\longrightarrow 0 \phantom{x} \text{as} \phantom{x} n \to \infty.
\end{align*}
It follows, for all $t \in [0,T]$
\begin{equation}
\int_0^t(\sigma(s, \widetilde{U}_n), \text{v}_j)d\widetilde{\mathcal{B}}(s)\longrightarrow \int_0^t(\sigma(s, \widetilde{U}), \text{v}_j)d\widetilde{\mathcal{B}}(s)\quad\text{in}\quad L^2(\widetilde{\Omega}).\label{diffu}
\end{equation}
From \eqref{force} and \eqref{diffu}, we conclude that all terms in $F_n$ converge in $L^{1}(\widetilde{\Omega})$.\vspace{0.1cm}

At this stage, we shall address the convergence of $G_{n}$, namely, of the monotone part introduced in \eqref{monoton}. Indeed, the reasoning is based on the monotonicity Lemma \ref{mon} (see \cite{paicu2008global}, \cite{busuioc2008steady}), together with Itô's Lemma.\vspace{0.1cm}

Set $\mathcal{E}:= L^2(\widetilde{\Omega}\times [0,T];V)\cap L^4(\widetilde{\Omega}\times [0,T]; \mathbf{W}^{1,4}(\mathcal{D}))$ and $\mathcal{X}:= V\cap \mathbf{W}^{1,4}(\mathcal{D}).$ Thanks to the estimates \eqref{E.v} and \eqref{E.14}, one can prove easily that for any $n \in \mathbb{N}$
\begin{align*}
\vert(\mathcal{Q}(\widetilde{U}_n),Y)_{\mathcal{E}^*,\mathcal{E}}\vert = \left\vert\widetilde{\mathbb{E}}	\int_0^T(\mathcal{Q}(\widetilde{U}_{n}),Y)_{\mathcal{X}^{*},\mathcal{X}}dt\right\vert \leq C \|Y\|_{\mathcal{E}}, \qquad \forall Y \in \mathcal{E}.
\end{align*}
Therefore, the operator $\mathcal{Q}(\widetilde{U}_{n})$ is bounded in $\mathcal{E}^{*}$, then there exists $\widetilde{\mathrm{Q}} \in \mathcal{E}^{*}$ such that up to a subsequence, still denoted the same, we have
\begin{equation}
\mathcal{Q}(\widetilde{U}_{n}) \rightharpoonup \widetilde{\mathrm{Q}} \quad \text{weakly in}\quad \mathcal{E}^{*}.\label{Qconv}
\end{equation}

Now, taking the limit (termwise) weakly in the equation \eqref{eq.monotone}, yields
\begin{align}
\int_{\mathcal{A}}(\widetilde{U}(t)-\alpha_{1}\Delta \widetilde{U}(t),\text{v}_j )d\widetilde{\mathbb{P}} & =\int_{\mathcal{A}}(\widetilde{U_{0}}-\alpha_{1}\Delta \widetilde{U_0},\text{v}_j )d\widetilde{\mathbb{P}} +\int_{\mathcal{A}}\int_{0}^{t} (-\widetilde{\mathrm{Q}}-\widetilde{U}\cdot\nabla\widetilde{U}+ \alpha_{1}\text{div}(\widetilde{U}\cdot\nabla\widetilde{A})
\notag\\
&
+\Phi(s,\widetilde{U}),\text{v}_j)\,dsd\widetilde{\mathbb{P}} +\int_{\mathcal{A}}\int_{0}^{t}(\sigma(s,\widetilde{U}),\text{v}_j ) \,d\widetilde{\mathcal{B}}_s d\widetilde{\mathbb{P}},\quad \forall j\in \mathbb{N}.\label{conv.monotone}
\end{align}
\begin{rmq}
It is straightforward to notice that from the equation \eqref{conv.monotone}, we have $\widetilde{U_{0}}= \widetilde{U}(0)$.
\end{rmq}

It remains to show the identification $\widetilde{\mathrm{Q}} = \mathcal{Q}(\widetilde{U})$. In fact, we will prove that 
\begin{equation}
\widetilde{\mathbb{E}}\int_0^T(\widetilde{\mathrm{Q}}-\mathcal{Q}(Y),\widetilde{U}-Y)_{\mathcal{X}^{*},\mathcal{X}}dt\geq 0, \phantom{xxx} \forall Y \in \mathcal{E}. \label{Qmon}
\end{equation}
First of all, assume that the inequality \eqref{Qmon} is true and choose $Y = \widetilde{U} -\varepsilon Z$, for an arbitrary $Z \in \mathcal{E}$, $\varepsilon >0$, then
\begin{equation*}
\widetilde{\mathbb{E}}\int_0^T(\widetilde{\mathrm{Q}}-\mathcal{Q}(\widetilde{U}-\varepsilon Z), Z)_{\mathcal{X}^{*},\mathcal{X}}dt\geq 0.
\end{equation*}
Since $\mathcal{Q}$ is hemicontinuous, let $\varepsilon$ tends to $0$, we obtain
\begin{equation*}
\widetilde{\mathbb{E}}\int_0^T(\widetilde{\mathrm{Q}} - \mathcal{Q}(\widetilde{U}),Z)_{\mathcal{X}^{*},\mathcal{X}}dt\geq 0.
\end{equation*}
The arbitrary choice of $Z$ yields $\widetilde{\mathrm{Q}} = \mathcal{Q}(\widetilde{U})$.\vspace{0.15cm}

Now, let us prove that \eqref{Qmon} holds. We have
\begin{align*}
(\widetilde{\mathrm{Q}}-\mathcal{Q}(Y),\widetilde{U}-Y)_{\mathcal{X}^{*},\mathcal{X}} &= (\mathcal{Q}(\widetilde{U}_{n})-\mathcal{Q}(Y),\widetilde{U}_{n}-Y)_{\mathcal{X}^{*},\mathcal{X}} + (\mathcal{Q}(\widetilde{U}_{n})-\widetilde{\mathrm{Q}},Y)_{\mathcal{X}^{*},\mathcal{X}}\\ &+ (\mathcal{Q}(Y),\widetilde{U}_{n}-\widetilde{U})_{\mathcal{X}^{*},\mathcal{X}} + (\widetilde{\mathrm{Q}},\widetilde{U})_{\mathcal{X}^{*},\mathcal{X}}-(\mathcal{Q}(\widetilde{U}_{n}),\widetilde{U}_{n})_{\mathcal{X}^{*},\mathcal{X}}= I_{1} + I_{2} + I_{3}+ I_{4}.
\end{align*}
By monotonicity of $\mathcal{Q}$, we have 
$\widetilde{\mathbb{E}} \int_{0}^{T} I_{1}dt= \widetilde{\mathbb{E}} \int_{0}^{T} (\mathcal{Q}(\widetilde{U}_{n})-\mathcal{Q}(Y),\widetilde{U}_{n}-Y)_{\mathcal{X}^{*},\mathcal{X}}dt \geq 0.$ On the other hand, \eqref{Qconv} implies that $\widetilde{\mathbb{E}} \int_{0}^{T}I_{2} dt= \widetilde{\mathbb{E}} \int_{0}^{T}(\mathcal{Q}(\widetilde{U}_{n})-\widetilde{\mathrm{Q}},Y)_{\mathcal{X}^{*},\mathcal{X}}dt \rightarrow 0,$ \text{as} $n \to \infty.$ The convergence \eqref{converge3} yields $\widetilde{\mathbb{E}} \int_{0}^{T} I_{3}dt= \widetilde{\mathbb{E}} \int_{0}^{T}(\mathcal{Q}(Y),\widetilde{U}_{n}-\widetilde{U})_{\mathcal{X}^{*},\mathcal{X}}dt \rightarrow 0,$ \text{as} $n \to \infty.$ It remains to verify the sign of $\widetilde{\mathbb{E}} \int_{0}^{T}I_{4}dt = \widetilde{\mathbb{E}} \int_{0}^{T}[(\widetilde{\mathrm{Q}},\widetilde{U})_{\mathcal{X}^{*},\mathcal{X}}-(\mathcal{Q}(\widetilde{U}_{n}),\widetilde{U}_{n})_{\mathcal{X}^{*},\mathcal{X}}]dt$.
\vspace*{0.5cm}
Recall the equation \eqref{eq.monotone}, then applying Itô's formula for the function $x \mapsto x^{2}$, we derive
\begin{align*}
d(\widetilde{U}_{n},\text{v}_{j})^{2}_{V}= \bigl(&-2(\widetilde{U}_{n},\text{v}_{j})_{V}(\mathcal{Q}(\widetilde{U}_{n}),\text{v}_{j})-2(\widetilde{U}_{n},\text{v}_{j})_{V}(\widetilde{U}_{n}\cdot\nabla \widetilde{U}_{n},\text{v}_{j})\notag\\ & + 2(\widetilde{U}_{n},\text{v}_{j})_{V}(\alpha_{1}\text{div}(\widetilde{U}_{n}\cdot \nabla\widetilde{A}_{n}),\text{v}_{j}) 
+2(\widetilde{U}_{n},\text{v}_{j})_{V}(\Phi(\cdot,\widetilde{U}_{n}),\text{v}_{j})\notag\\ & + \sum_{k=1}^{\infty}\vert(\sigma^{k}(\cdot,\widetilde{U}_{n}), \text{v}_{j})\vert^{2}\bigr)dt + (\widetilde{U}_{n},\text{v}_{j})_{V}(\sigma(\cdot,\widetilde{U}_{n}),\text{v}_{j})d\widetilde{\mathcal{B}}(t), \qquad \forall j=1,\cdots, n.
\end{align*}
Multiplying by $\frac{1}{\lambda_{j}}$ and summing over $j=1,\cdots, n$, we get
\begin{align*}
d\|\widetilde{U}_{n}\|_{V}^{2} = (-2(\mathcal{Q}(\widetilde{U}_{n}),\widetilde{U}_{n})_{\mathcal{X}^*,\mathcal{X}} &-2 (\widetilde{U}_{n}\cdot\nabla \widetilde{U}_{n},\widetilde{U}_{n})+2(\alpha_{1}\text{div}(\widetilde{U}_{n}\cdot\nabla \widetilde{A}_{n}),\widetilde{U}_{n}) + 2 (\Phi(\cdot,\widetilde{U}_{n}),\widetilde{U}_{n})\\
& + \sum_{k=1}^{\infty}\sum_{j=1}^n\frac{1}{\lambda_{j}}\vert(\sigma^{k}(\cdot,\widetilde{U}_{n}),\text{v}_{j})\vert^{2})dt + 2(\sigma(\cdot,\widetilde{U}_{n}),\widetilde{U}_{n})d\widetilde{\mathcal{B}}(t).
\end{align*}
Let $\widetilde{\sigma}_{n}^{k},$ $k\in \mathbb{N}^+,$ denote the solution to the modified Stokes problem \eqref{Stokes1} for $f =\sigma^{k}(\cdot,\widetilde{U}_{n}),$ $k \in \mathbb{N}^{+}$. Taking into account \eqref{Stokes1eq}, we obtain
\begin{align*}
\sum_{k=1}^{\infty}\sum_{j=1}^n\frac{1}{\lambda_{j}}\vert(\sigma^{k}(\cdot,\widetilde{U}_{n}),\text{v}_{j})\vert^{2} &= \sum_{k=1}^{\infty}\sum_{j=1}^n\frac{1}{\lambda_{j}}\vert(\widetilde{\sigma}_{n}^{k},\text{v}_{j})_{V}\vert^{2} = \sum_{k=1}^{\infty}\|\widetilde{\sigma}_{n}^{k}\|_{V}^{2}.
\end{align*}
Then, the above equation becomes of the following form
\begin{align}
d\|\widetilde{U}_{n}\|_{V}^{2} = (-2(\mathcal{Q}(\widetilde{U}_{n}),\widetilde{U}_{n})_{\mathcal{X}^*,\mathcal{X}} &-2 (\widetilde{U}_{n}\cdot\nabla \widetilde{U}_{n},\widetilde{U}_{n})+2(\alpha_{1}\text{div}(\widetilde{U}_{n}\cdot\nabla \widetilde{A}_{n}),\widetilde{U}_{n}) + 2 (\Phi(\cdot,\widetilde{U}_{n}),\widetilde{U}_{n})\notag \\
& + \sum_{k=1}^{\infty}\|\widetilde{\sigma}_{n}^{k}\|_{V}^{2})dt + 2(\sigma(\cdot,\widetilde{U}_{n}),\widetilde{U}_{n})d\widetilde{\mathcal{B}}(t).\label{Ito}
\end{align}
Notice that by the anti-symmetry of the trilinear form \eqref{trilin2}
\begin{align}
(\widetilde{U}_{n}\cdot\nabla \widetilde{U}_{n},\widetilde{U}_{n}) = b(\widetilde{U}_{n}, \widetilde{U}_{n}, \widetilde{U}_{n})  = -b(\widetilde{U}_{n}, \widetilde{U}_{n}, \widetilde{U}_{n})= 0. \label{eq0}
\end{align}
In addition, using the divergence theorem and the symmetry of the matrix $\widetilde{A}_n$, we obtain
\begin{align}
(\text{div}(\widetilde{U}_{n}\cdot\nabla \widetilde{A}_{n}),\widetilde{U}_{n})
= -\int_{\mathcal{D}}\widetilde{U}_{n}\cdot\nabla \widetilde{A}_{n}\cdot\nabla\widetilde{U}_{n}dx = -\dfrac{1}{2}\int_{\mathcal{D}}\widetilde{U}_{n}\cdot\nabla \widetilde{A}_{n} \cdot\widetilde{A}_{n}dx=0.\label{eq00}
\end{align}
Integrating the equation \eqref{Ito} over the time interval $[0,T],$ applying the expectation, and taking into account \eqref{eq0}, \eqref{eq00}, we obtain
\begin{align*}
\widetilde{\mathbb{E}}\|\widetilde{U}_{n}(T)\|_{V}^{2}-\widetilde{\mathbb{E}}\|\widetilde{U}_{n}(0)\|_{V}^{2} = &-2\widetilde{\mathbb{E}}\int_{0}^{T}(\mathcal{Q}(\widetilde{U}_{n}),\widetilde{U}_{n})_{\mathcal{X}^*,\mathcal{X}}dt  + 2\widetilde{\mathbb{E}}\int_{0}^{T} (\Phi(t,\widetilde{U}_{n}),\widetilde{U}_{n})dt +\widetilde{\mathbb{E}} \int_{0}^{T}\sum_{k=1}^{\infty}\|\widetilde{\sigma}^{k}_{n}\|_{V}^{2}dt.
\end{align*}
Thus 
\begin{align}
\notag \widetilde{\mathbb{E}}\int_{0}^{T}(\mathcal{Q}(\widetilde{U}_{n}),\widetilde{U}_{n})_{\mathcal{X}^*,\mathcal{X}}dt =& -\dfrac{1}{2}(\widetilde{\mathbb{E}}\|\widetilde{U}_{n}(T)\|_{V}^{2}-\widetilde{\mathbb{E}}\|\widetilde{U}_{n}(0)\|_{V}^{2}) \\ &+ \widetilde{\mathbb{E}}\int_{0}^{T} (\Phi(t,\widetilde{U}_{n}),\widetilde{U}_{n})dt +\dfrac{1}{2}\widetilde{\mathbb{E}} \int_{0}^{T}\sum_{k=1}^{\infty}\|\widetilde{\sigma}_{n}^{k}\|_{V}^{2}dt.\label{Q(U)}
\end{align}
Recall now the equation \eqref{conv.monotone}, then proceeding a similar reasoning as above with the function $(\widetilde{U}(t),\text{v}_j)_V^2$, we infer for all $j \in \mathbb{N}$
\begin{align}
\notag\widetilde{\mathbb{E}}\int_{0}^{T}(\widetilde{\mathrm{Q}},\widetilde{U})_{\mathcal{X}^*,\mathcal{X}}dt = &-\dfrac{1}{2}(\widetilde{\mathbb{E}}\|\widetilde{U}(T)\|_{V}^{2}-\widetilde{\mathbb{E}}\|\widetilde{U}(0)\|_{V}^{2}) \\ &+ \widetilde{\mathbb{E}}\int_{0}^{T} (\Phi(t,\widetilde{U}),\widetilde{U})dt +\dfrac{1}{2}\widetilde{\mathbb{E}} \int_{0}^{T}\sum_{k=1}^{\infty}\|\widetilde{\sigma}^{k}\|_{V}^{2}dt.\label{Q}
\end{align}
where $\widetilde{\sigma}^{k},$ $k\in \mathbb{N}^+,$ denotes the solution to the modified Stokes problem \eqref{Stokes1}, for $f=\sigma^{k}(t,\widetilde{U}),$ $k\in \mathbb{N}^+$. Taking the difference of the equalities \eqref{Q(U)} and \eqref{Q}, we obtain
\begin{align*}
\widetilde{\mathbb{E}}\int_{0}^{T}\left[(\widetilde{\mathrm{Q}},\widetilde{U})_{\mathcal{X}^*,\mathcal{X}}-(\mathcal{Q}(\widetilde{U}_{n}),\widetilde{U}_{n})_{\mathcal{X}^*,\mathcal{X}}\right]dt &= \dfrac{1}{2}\widetilde{\mathbb{E}}\|\widetilde{U}_{n}(T)\|_{V}^{2}-\dfrac{1}{2}\widetilde{\mathbb{E}}\|\widetilde{U}(T)\|_{V}^{2} +\dfrac{1}{2}\widetilde{\mathbb{E}}\|\widetilde{U}(0)\|_{V}^{2}-\dfrac{1}{2}\widetilde{\mathbb{E}}\|\widetilde{U}_{n}(0)\|_{V}^{2}  \notag\\ &
 + \widetilde{\mathbb{E}}\int_{0}^{T} (\Phi(t,\widetilde{U}),\widetilde{U})dt-\widetilde{\mathbb{E}}\int_{0}^{T} (\Phi(t,\widetilde{U}_{n}),\widetilde{U}_{n})dt\notag\\ &
  + \dfrac{1}{2}\widetilde{\mathbb{E}} \int_{0}^{T}\sum_{k=1}^{\infty}\|\widetilde{\sigma}^{k}\|_{V}^{2}dt-\dfrac{1}{2}\widetilde{\mathbb{E}} \int_{0}^{T}\sum_{k=1}^{\infty}\|\widetilde{\sigma}_{n}^{k}\|_{V}^{2}dt.
\end{align*}
Moreover, using Bessel's inequality, we get
\begin{align*}
\widetilde{\mathbb{E}}\int_{0}^{T}\left[(\widetilde{\mathrm{Q}},\widetilde{U})_{\mathcal{X}^*,\mathcal{X}}-(\mathcal{Q}(\widetilde{U}_{n}),\widetilde{U}_{n})_{\mathcal{X}^*,\mathcal{X}}\right]dt &\geq \dfrac{1}{2}\widetilde{\mathbb{E}}\|\widetilde{U}_{n}(T)\|_{V}^{2}-\dfrac{1}{2}\widetilde{\mathbb{E}}\|\widetilde{U}(T)\|_{V}^{2} \notag\\
& + \widetilde{\mathbb{E}}\int_{0}^{T} (\Phi(t,\widetilde{U}),\widetilde{U})dt -\widetilde{\mathbb{E}}\int_{0}^{T} (\Phi(t,\widetilde{U}_{n}),\widetilde{U}_{n})dt
\notag\\&  + \dfrac{1}{2}\widetilde{\mathbb{E}} \int_{0}^{T}\sum_{k=1}^{\infty}\|\widetilde{\sigma}^{k}\|_{V}^{2}dt-\dfrac{1}{2}\widetilde{\mathbb{E}} \int_{0}^{T}\sum_{k=1}^{\infty}\|\widetilde{\sigma}_{n}^{k}\|_{V}^{2}dt.
\end{align*}
Let	$t\in[0,T]$, thanks to \eqref{tildeE.v} we know that $\widetilde{U}_{n}(t)$ is	bounded	in $L^2(\widetilde{\Omega},V)$. Hence, there	 exists	$u_t\in	L^2(\widetilde{\Omega},V)$ such that	
\begin{equation*}
\widetilde{U}_{n}(t)	\rightharpoonup	u_t \quad	\text{in} \phantom{x}	L^2(\widetilde{\Omega},V).
\end{equation*}
Moreover, we know that $\widetilde{U}_{n}(t)$ converges to	$\widetilde{U}(t)$ in $L^2(\widetilde{\Omega},H)$. Therefore, since $V\hookrightarrow	H$,	we deduce that $u_t=\widetilde{U}(t)$. Thus
\begin{equation}
\liminf_{n\to \infty} \widetilde{\mathbb{E}}\|\widetilde{U}_{n}(T)\|_{V}^{2} \geq \widetilde{\mathbb{E}}\|\widetilde{U}(T)\|_{V}^{2}. \label{LiminfT}
\end{equation}
From \eqref{converge1}, we obtain
\begin{equation}
\widetilde{\mathbb{E}}\int_{0}^{T}(\Phi(t,\widetilde{U}_{n}),\widetilde{U}_{n}) \rightarrow \widetilde{\mathbb{E}}\int_{0}^{T} (\Phi(t,\widetilde{U}),\widetilde{U})dt, \phantom{xx} \text{as} \phantom{x} n \to \infty.\label{PHI}
\end{equation}
In view of \eqref{Stokes1ineq}, we have 
\begin{equation}
\|\widetilde{\sigma}_{n}^{k}\|_{V} \leq C \|\sigma^{k}(\cdot,\widetilde{U}_{n})\|_{2} \phantom{x} \text{and} \phantom{x} \|\widetilde{\sigma}^{k}\|_{V} \leq C \|\sigma^{k}(\cdot,\widetilde{U})\|_{2}, \quad \forall k \in \mathbb{N}^{+}.\label{Sigmaineq}
\end{equation}
Therefore, \eqref{Sigmaineq}, \eqref{growth2}, \eqref{tildeE.1} and \eqref{converge3} imply, for all $k \in\mathbb{N}^{+}$ 
\begin{align}
&\widetilde{\mathbb{E}}\int_{0}^{T}\sum_{k=1}^{\infty}\left\vert\|\widetilde{\sigma}^{k}_{n}\|_{V}^{2}-\|\widetilde{\sigma}^{k}\|_{V}^{2}\right\vert dt \leq \widetilde{\mathbb{E}}\int_{0}^{T}\sum_{k=1}^{\infty}\left\vert\|\widetilde{\sigma}^{k}_{n}\|_{V}-\|\widetilde{\sigma}^{k}\|_{V}\right\vert(\|\widetilde{\sigma}^{k}_{n}\|_{V}+\|\widetilde{\sigma}^{k}\|_{V})dt\notag\\ &
\leq  \int_{0}^{T}\sum_{k=1}^{\infty}\widetilde{\mathbb{E}}\|\widetilde{\sigma}^{k}_n-\widetilde{\sigma}^{k}\|_{V} (\|\widetilde{\sigma}^{k}_{n}\|_{V}+\|\widetilde{\sigma}^{k}\|_{V}) dt
 \leq C\left(\int_{0}^{T}\sum_{k=1}^{\infty}\widetilde{\mathbb{E}}\|\widetilde{\sigma}^{k}_{n}-\widetilde{\sigma}^{k}\|_{V}^{2}dt\right)^{\frac{1}{2}} \longrightarrow 0 \quad \text{as} \quad n \to \infty. \label{Sigmaconv}
\end{align}
Taking into account \eqref{LiminfT}, \eqref{PHI} and \eqref{Sigmaconv}, we finally deduce
\begin{equation*}
\liminf_{n \to \infty}\widetilde{\mathbb{E}}\int_{0}^{T}\left[(\widetilde{\mathrm{Q}},\widetilde{U})_{\mathcal{X}^*,\mathcal{X}}-(\mathcal{Q}(\widetilde{U}_{n}),\widetilde{U}_{n})_{\mathcal{X}^*,\mathcal{X}}\right]dt \geq 0,
\end{equation*}
from which follows that $\liminf_{n\to \infty}\widetilde{\mathbb{E}} \int_{0}^{T} I_{4}dt\geq 0$, thus the inequality \eqref{Qmon} holds, and
\begin{equation}
G_n = \int_{0}^{t} (\mathcal{Q}(\widetilde{U}_{n}),\text{v}_{j})ds \rightharpoonup \int_{0}^{t}(\mathcal{Q}(\widetilde{U}),\text{v}_{j})ds, \label{G.conv}
\end{equation}
weakly in $L^{1}(\widetilde{\Omega}).$ Consequently, replacing $\widetilde{\mathrm{Q}}$ by $\mathcal{Q}(\widetilde{U})$ in the equation \eqref{conv.monotone} implies that the stochastic process $\widetilde{U}$ satisfies equation \eqref{weak.stoch} for $\varphi = \text{v}_{j},$ $\forall j \in \mathbb{N}$. However, span$\overline{\{\text{v}_{1},\cdot \cdot \cdot, \text{v}_{n},\cdot \cdot \cdot\}}^{V}=V,$ hence due to regularity reasons, $\widetilde{U}$ satisfies equation \eqref{weak.stoch} for all $\varphi \in V \cap H^{2}$. Thereby, the proof of Theorem \ref{main.result} concludes, and the system $(\widetilde{\Omega}, \widetilde{\mathcal{F}}, \widetilde{\mathbb{P}}, \{\mathcal{F}_{t}\}_{t\in [0,T]},\widetilde{\mathcal{B}},\widetilde{U})$ is a stochastic weak solution to the enhanced equation \eqref{syst} in the sense of Definition \ref{weak.sol}.

\begin{center}
\section{Long time behaviour of martingale solutions}\label{section4}
\end{center}
\setcounter{equation}{0}

In this section, we study the asymptotic behaviour of martingale solutions to stochastic third grade fluids equations of type \eqref{syst}, namely, we aim to prove that $H^1$-weak solutions $U(t)$ to the system \eqref{syst} converge to the stationary trivial solution $u^\infty := 0$, as $t \to \infty$. Particularly, we will prove the moment exponential stability and the almost sure exponential stability according to stability concepts studied in, $e.g.$ \cite[Definitions 2.2, 2.3]{caraballo2002exponential}, \cite[Definitions 5.1, 5.2]{cipriano2019asymptotic}.

\begin{hyp}
The force and the diffusion coefficient satisfy the following conditions
\begin{align}
&\|\Phi(t,U)\|_{2}^2\leq c_\Phi C_\ell e^{-\eta_1 t}(1+ \|U\|_{V}^2), \qquad \forall t \in [0,\infty) \label{ForceSTAT}
\\ &
\|\sigma(t,U)\|_{L_{2}^0}^2\leq C_\ell e^{-\eta_1 t}(1+ \|U\|_{V}^2),\qquad \forall t \in [0,\infty) \label{DIFSTAT}
\end{align}
where $c_\Phi,$ $C_\ell$ and $\eta_1$ are positive constants.
\end{hyp}
\subsection{The moment exponential stability}
\begin{thm}
Assume that the hypotheses \eqref{ForceSTAT}, \eqref{DIFSTAT} hold and that 
\begin{equation}
2\mu > c_\Phi + 2(c_{\alpha_{1}}^2+ c_{\alpha_{2}}^2)/\beta,\label{Largemu}
\end{equation} 
where $c_\Phi,$ $c_{\alpha_1}$ and $c_{\alpha_2}$ are positive constants depending on $\Phi,$ $\alpha_1$ and $\alpha_2,$ respectively. Then, any martingale solution $U(t)$ to the equation \eqref{syst} converges to the stationary trivial solution $u^\infty := 0$ exponentially in the mean square i.e., there exist a positive constant $\Lambda$ and a real number $\eta \in (0,\eta_1),$ such that
\begin{equation*}
\mathbb{E}\|U(t)\|_V^2\leq \Lambda e^{-\eta t}, \qquad \forall t \in [0,\infty).
\end{equation*}\label{EXPSTAB}
\end{thm}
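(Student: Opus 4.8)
The plan is to combine the energy relation for $\|U(t)\|_V^2$ — the same one used in the proof of Theorem~\ref{main.result} to identify the monotone limit (see \eqref{Q(U)}--\eqref{Q}) — with the exponential‑in‑time decay contained in \eqref{ForceSTAT}--\eqref{DIFSTAT}, and then to close a Gronwall argument with an exponential weight. Concretely, for the given martingale solution $U$, applying It\^o's formula to $\|U(t)\|_V^2$ exactly as in the proof of Theorem~\ref{main.result} (Galerkin approximation, division by $\lambda_j$, summation and limit passage, the cubic terms $(U\cdot\nabla U,U)$ and $(\mathrm{div}(U\cdot\nabla A),U)$ vanishing by \eqref{trilin2} and \eqref{eq00}, and the martingale part disappearing after the expectation thanks to a standard stopping‑time localisation using the regularity of Definition~\ref{weak.sol}) gives, for every $t\ge 0$,
\begin{align*}
\mathbb{E}\|U(t)\|_V^2 + 2\,\mathbb{E}\int_0^t(\mathcal{Q}(U),U)_{\mathcal{X}^*,\mathcal{X}}\,ds
&= \mathbb{E}\|U_0\|_V^2 + 2\,\mathbb{E}\int_0^t(\Phi(s,U),U)\,ds \\
&\quad + \mathbb{E}\int_0^t\sum_{k=1}^{\infty}\|\widetilde{\sigma}^k\|_V^2\,ds,
\end{align*}
where $\widetilde{\sigma}^k$ denotes the solution of the modified Stokes problem \eqref{Stokes1} with $f=\sigma^k(s,U)$, and where we used $\mathcal{L}(U(0))=\mathcal{L}(U_0)$.

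Next I would bound each term on the right. Since $\mathcal{Q}(0)=0$, re‑running the estimates \eqref{ep1}--\eqref{ep2} but now choosing the Young parameters so as to absorb the whole favourable contribution $\frac{\beta}{2}\|A(U)\|_4^4$, together with Korn's inequality, yields the coercivity bound
\begin{equation*}
2(\mathcal{Q}(U),U)_{\mathcal{X}^*,\mathcal{X}}\ \ge\ \Bigl(2\mu-\frac{2(c_{\alpha_1}^2+c_{\alpha_2}^2)}{\beta}\Bigr)\|\nabla U\|_2^2 .
\end{equation*}
For the force term, Cauchy--Schwarz, Poincar\'e's inequality $\|U\|_2\le\sqrt{C_P}\,\|\nabla U\|_2$ (with $C_P$ the Poincar\'e constant) and Young's inequality with the parameter tuned to $c_\Phi$ give $2(\Phi(s,U),U)\le c_\Phi\|\nabla U\|_2^2+\frac{C_P}{c_\Phi}\|\Phi(s,U)\|_2^2\le c_\Phi\|\nabla U\|_2^2+C_PC_\ell\,e^{-\eta_1 s}(1+\|U\|_V^2)$ by \eqref{ForceSTAT}; and for the It\^o correction, \eqref{Stokes1ineq} and \eqref{DIFSTAT} give $\sum_k\|\widetilde{\sigma}^k\|_V^2\le C\|\sigma(s,U)\|_{L_2^0}^2\le CC_\ell\,e^{-\eta_1 s}(1+\|U\|_V^2)$. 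Writing $y(t):=\mathbb{E}\|U(t)\|_V^2$, collecting these bounds, setting $\kappa_0:=2\mu-c_\Phi-2(c_{\alpha_1}^2+c_{\alpha_2}^2)/\beta>0$ (this is precisely \eqref{Largemu}) and using $\|\nabla U\|_2^2\ge\lambda_0\|U\|_V^2$ with $\lambda_0:=(C_P+\alpha_1)^{-1}>0$, one arrives at the differential inequality
\begin{equation*}
y'(t)\ \le\ -\kappa_0\lambda_0\, y(t) + C\,e^{-\eta_1 t}\bigl(1+y(t)\bigr),\qquad t\ge 0 .
\end{equation*}

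Finally I would close the argument by the exponential‑weight trick. Fix $\eta\in(0,\min(\eta_1,\kappa_0\lambda_0))$ and set $z(t):=e^{\eta t}y(t)$; then $z'(t)\le(\eta-\kappa_0\lambda_0+Ce^{-\eta_1 t})z(t)+Ce^{(\eta-\eta_1)t}\le Ce^{-\eta_1 t}z(t)+Ce^{(\eta-\eta_1)t}$, and since both $\int_0^\infty e^{-\eta_1 s}\,ds$ and $\int_0^\infty e^{(\eta-\eta_1)s}\,ds$ are finite, Gronwall's lemma gives $z(t)\le\bigl(y(0)+\frac{C}{\eta_1-\eta}\bigr)e^{C/\eta_1}=:\Lambda<\infty$, where $y(0)=\mathbb{E}\|U_0\|_V^2<\infty$ by $(H_1)$. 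Hence $\mathbb{E}\|U(t)\|_V^2=e^{-\eta t}z(t)\le\Lambda e^{-\eta t}$, which is the claim. The one genuinely delicate point is the first step: justifying the energy relation for a solution having only the regularity of Definition~\ref{weak.sol}; this is carried out exactly as in the proof of Theorem~\ref{main.result}, with the higher‑order cubic terms cancelling thanks to the $V\cap\mathbf{W}^{1,4}(\mathcal{D})$ regularity, so no new difficulty arises. The hypothesis \eqref{Largemu} enters only to guarantee the strict dissipativity $\kappa_0>0$, and everything else is routine bookkeeping.
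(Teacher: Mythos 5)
Your proposal is correct and follows essentially the same route as the paper: Itô's formula for the $V$-norm (via the modified Stokes lifting \eqref{Stokes1eq}--\eqref{Stokes1ineq}), cancellation of the transport and $\mathrm{div}(U\cdot\nabla A)$ terms, Young's inequality tuned to absorb the $\alpha_1,\alpha_2$ contributions into $\frac{\beta}{2}\|A\|_4^4$, the decay hypotheses \eqref{ForceSTAT}--\eqref{DIFSTAT}, and a Gronwall argument with the weight $e^{\eta t}$ (the paper puts the weight inside Itô's formula, you apply it afterwards — an immaterial difference). If anything, your version is slightly more careful than the paper's in converting the dissipation $\|\nabla U\|_2^2$ into $\|U\|_V^2$ via the explicit constant $\lambda_0=(C_P+\alpha_1)^{-1}$, a step the paper absorbs implicitly into its constants.
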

\begin{proof}
Recall the system \eqref{syst} and set 
\begin{align}
&f(U):=\mu \Delta U- U\cdot \nabla U+\alpha_{1}\text{div}(U\cdot \nabla A+(\nabla U)^T A+ A(\nabla U))+\alpha_{2}\text{div}(A^2) + \beta\text{div}(\vert A\vert^2 A)+\Phi(\cdot,U). \label{F(u)}
\end{align}
Denote by $\widetilde{f}$ (resp. $\widetilde{\sigma}^k,$ $k\in \mathbb{N}^+$) the solution to the modified Stokes problem \eqref{Stokes1}, for $f = f(U)$ (resp. $f=\sigma^k(\cdot,U),$ $k \in \mathbb{N}^+$).\vspace{0.1cm}

Applying Itô's formula to the function $e^{\eta t}\|U(t)\|_{V}^{2}$, yields
\begin{align*}
de^{\eta t}\|U(t)\|_{V}^{2} = \eta e^{\eta t}\|U(t)\|_{V}^{2}dt + 2e^{\eta t}(\widetilde{f},U(t))_{V}dt + 2e^{\eta t}\sum_{k=1}^\infty(\widetilde{\sigma}^k,U(t))_{V}d\beta_{k}(t)+ e^{\eta t}\sum_{k=1}^\infty\| \widetilde{\sigma}^k\|^2_Vdt.
\end{align*}
Thanks to \eqref{Stokes1eq}, \eqref{Stokes1ineq}, we obtain
\begin{align*}
de^{\eta t}\|U(t)\|_{V}^{2} \leq \eta e^{\eta t}\|U(t)\|_{V}^{2}dt + 2e^{\eta t}(f(U),U(t))dt + 2e^{\eta t}(\sigma(t,U),U(t))d\mathcal{B}_{t}+ Ce^{\eta t}\|\sigma(t,U)\|^2_{L_{2}^0}dt,
\end{align*}
where we used the following identifications
\begin{equation}
\sum_{k=1}^\infty(\sigma^k(\cdot,U),U)d\beta_k = (\sigma(\cdot,U),U)d\mathcal{B}, \qquad \sum_{k=1}^\infty\|\sigma^k(\cdot,U)\|_{2}^2= \|\sigma(\cdot, U)\|_{L_2^0}^2.\label{IDENTIF}
\end{equation}
Now, integrating over the time interval $[0,t]$ and taking the expectation in the above inequality, we get
\begin{align}
e^{\eta t}\mathbb{E}\|U(t)\|_{V}^{2} &\leq   \mathbb{E}\|U_0\|_{V}^{2}+\eta\int_{0}^{t} e^{\eta s}\mathbb{E}\|U(s)\|_{V}^{2}ds\notag\\ &+2\int_{0}^{t}e^{\eta s}\mathbb{E}(f(U),U(s))ds + C\int_{0}^{t}e^{\eta s}\mathbb{E}\| \sigma(s,U)\|^2_{L_{2}^0}ds.\label{Statineq}
\end{align}

Next, we estimate the terms of the third expression in the $r.h.s.$ of the inequality \eqref{Statineq}.
The divergence theorem yields
\begin{align}
(\mu \Delta U(t), U(t)) &= -\mu( \nabla U(t), \nabla U(t))= -\mu \|\nabla U(t)\|_{2}^{2}.\label{1S}
\end{align}
Using the property \eqref{trilin2}, we obtain
\begin{align}
(U(t)\cdot \nabla U(t), U(t)) &= b(U(t), U(t), U(t))= -b(U(t), U(t), U(t))= 0.\label{4.7}
\end{align}
Applying the divergence theorem multiple times and using the symmetry of the matrix $A,$ we infer
\begin{align}
(\alpha_{1}\text{div}(U\cdot \nabla A),U(t))=  -\alpha_1\int_{\mathcal{D}}(U\cdot \nabla A) \cdot \nabla U(t)dx = -\dfrac{\alpha_1}{2}\int_{\mathcal{D}}(U\cdot \nabla A) \cdot A dx = 0. \label{4.8}
\end{align}
The divergence theorem, the symmetry of the matrix $A$ and Young's inequality, imply
\begin{align}
\vert (\alpha_{1}\text{div}((\nabla U)^{T}A+A(\nabla U)), U(t))\vert &\leq \left\vert\int_{\mathcal{D}}\alpha_{1} ((\nabla U)^{T}A+A(\nabla U))\cdot \nabla U(t)dx\right\vert\notag\\
& \leq \left\vert \dfrac{\alpha_1}{2} \int_{\mathcal{D}}((\nabla U)^T A +A(\nabla U)) \cdot A dx\right\vert  \leq \left\vert \alpha_{1} \int_{\mathcal{D}} A(\nabla U)\cdot A dx\right\vert  \notag \\
&\leq \alpha_{1} \|A\|_{4}^2\|\nabla U\|_{2}  \leq \dfrac{\beta}{4}\|A\|_{4}^{4} + \dfrac{c_{\alpha_{1}}^{2}}{\beta}\|U\|_{V}^2.
\end{align}
Following a similar reasoning as above, we obtain
\begin{align}
\vert(\alpha_2\text{div}(A^2) ,U(t))\vert \leq \left\vert \alpha_2 \int_{\mathcal{D}}A^2\cdot \nabla U(t)dx\right\vert   
\leq \vert \alpha_2\vert \|A\|_{4}^2\|\nabla U(t)\|_{2} \leq \dfrac{\beta}{4}\|A\|_{4}^{4}+\dfrac{c_{\alpha_2}^2}{\beta}\|U\|_{V}^2.
\end{align}
Again, using the divergence theorem together with the symmetry of $A$, yield
\begin{align}
(\beta\text{div}(\vert A\vert^2 A), U(t)) & = -\beta\int_{\mathcal{D}}\vert A\vert^{2}A\cdot \nabla U(t)dx=-\dfrac{\beta}{2}\int_{\mathcal{D}}\vert A\vert^2 A \cdot A dx  = -\dfrac{\beta}{2}\|A\|_{4}^{4}.
\end{align}
By the assumption \eqref{ForceSTAT} and Young's inequality, we have
\begin{align}
\vert(\Phi(t,U),U(t))\vert &\leq \|\Phi(t,U)\|_{2}\|U(t)\|_{2} 
\leq \dfrac{1}{2c_\Phi} \|\Phi(t,U)\|_{2}^2 + \dfrac{c_\Phi}{2} \|U(t)\|_{2}^2 \notag\\ &\leq \frac{1}{2}C_\ell e^{-\eta_1 t} (1+\|U(t)\|_{V}^{2})+ \dfrac{c_\Phi}{2}\|U(t)\|_{V}^2  .\label{2S}
\end{align}

Recalling the inequality \eqref{Statineq} and taking into account the estimates \eqref{1S}-\eqref{2S} and the assumption \eqref{DIFSTAT}, we obtain 
\begin{align*}
e^{\eta t}\mathbb{E}\|U(t)\|_{V}^{2} 
& \leq \mathbb{E}\|U_0\|_{V}^{2} + \int_{0}^t\left(\eta -2\mu + c_\Phi + \dfrac{2(c_{\alpha_1}^2+c_{\alpha_2}^2)}{\beta}\right)e^{\eta s}\mathbb{E}\|U(s)\|_{V}^2 ds 
\notag\\ &
+C_\ell(1+C) \int_{0}^t e^{(\eta -\eta_1) s}ds + C_{\ell}(1+C)\int_{0}^t e^{-\eta_1 s} e^{\eta s} \mathbb{E}\|U(s)\|_{V}^2  ds.
\end{align*}
We have \eqref{Largemu}, then we can choose $\eta> 0$ such that $\eta- 2\mu + c_\Phi + 2(c_{\alpha_1}^2+ c_{\alpha_2}^2)/\beta< 0,$ thus
\begin{equation*}
e^{\eta t}\mathbb{E}\|U(t)\|_{V}^{2} \leq \mathbb{E}\|U_0\|_{V}^{2} + C_\ell\int_{0}^{t}e^{(\eta -\eta_{1})s}ds + C_\ell\int_{0}^{t}e^{-\eta_1 s}e^{\eta s} \mathbb{E}\|U(s)\|_{V}^{2}ds.
\end{equation*}
Applying Gronwall's Lemma, we get
\begin{align*}
e^{\eta t}\mathbb{E}\|U(t)\|_{V}^{2} &\leq  \left(\mathbb{E}\|U_0\|_{V}^{2} + \dfrac{C_\ell}{\eta_1-\eta}(1-e^{(\eta-\eta_{1})t})\right)e^{C_\ell\int_{0}^{t}e^{-\eta_1 s}ds}.
\end{align*}
Then, we deduce the claimed result
\begin{equation}
\mathbb{E}\|U(t)\|_{V}^{2} \leq \Lambda e^{-\eta t}, \qquad \forall t \in [0,\infty)\label{STABILITYmoment}
\end{equation}
where $\Lambda := \left(\mathbb{E}\|U_0\|_{V}^{2} +  \frac{C_\ell}{\eta_1-\eta}\right)e^{C_\ell/\eta_{1}}.$
\end{proof}
\subsection{The almost sure exponential stability}
\begin{thm}
Assume that all conditions in Theorem \ref{EXPSTAB} hold. Then, any martingale solution $U(t)$ to the equation \eqref{syst} converges to the stationary trivial solution $ u^\infty:=0 $ almost surely exponentially. Namely, there exists a positive real number $ \lambda$, such that
\begin{equation*}
\lim_{t\to \infty}\frac{1}{t}\log\|U(t)\|_{V}\leq -\lambda, \qquad \mathbb{P}-\text{a.s.}.
\end{equation*}
\end{thm}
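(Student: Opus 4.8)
The plan is to upgrade the mean--square decay of Theorem \ref{EXPSTAB} to a pathwise statement by a Borel--Cantelli argument on the integers, in the spirit of \cite{caraballo2002exponential, cipriano2019asymptotic}. Fix the same constant $\eta\in(0,\eta_1)$ as in the proof of Theorem \ref{EXPSTAB}, so that $\eta-2\mu+c_\Phi+2(c_{\alpha_1}^2+c_{\alpha_2}^2)/\beta<0$; the mean--square bound $\mathbb{E}\|U(t)\|_V^2\le\Lambda e^{-\eta t}$ of \eqref{STABILITYmoment} is then available. First I would revisit the Itô expansion of $e^{\eta t}\|U(t)\|_V^2$ carried out in the proof of Theorem \ref{EXPSTAB} (through the Stokes--regularised formulation, so that no extra regularity of the martingale solution is needed), but keep the stochastic integral rather than killing it by expectation. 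Integrating from an integer time $n$ and discarding the negative drift term, this gives, for every $t\in[n,n+1]$,
\begin{equation*}
e^{\eta t}\|U(t)\|_V^2\le e^{\eta n}\|U(n)\|_V^2+C\int_n^t e^{(\eta-\eta_1)s}\bigl(1+\|U(s)\|_V^2\bigr)\,ds+2\left|\int_n^t e^{\eta s}\bigl(\sigma(s,U),U(s)\bigr)\,d\mathcal{B}_s\right|,
\end{equation*}
where the collection of the higher--order terms with the good sign is exactly as in \eqref{1S}--\eqref{2S}.

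The core estimate to establish is
\begin{equation*}
\mathbb{E}\sup_{n\le t\le n+1}\|U(t)\|_V^2\le K e^{-\eta n},\qquad n\in\mathbb{N},
\end{equation*}
with $K$ independent of $n$. To prove it I would take $\sup_{t\in[n,n+1]}$ and $\mathbb{E}$ in the displayed inequality and use: $\mathbb{E}\bigl(e^{\eta n}\|U(n)\|_V^2\bigr)\le\Lambda$ from \eqref{STABILITYmoment}; $\mathbb{E}\int_n^{n+1}e^{(\eta-\eta_1)s}(1+\|U(s)\|_V^2)\,ds\le C e^{(\eta-\eta_1)n}$, again by \eqref{STABILITYmoment} together with $\eta<\eta_1$ (so this quantity decays); and, for the stochastic term, the Burkholder--Davis--Gundy inequality followed by $\|\sigma(s,U)\|_{L_2^0}^2\le C_\ell e^{-\eta_1 s}(1+\|U\|_V^2)$ from \eqref{DIFSTAT}, the bound $\|U\|_2\le\|U\|_V$, and Young's inequality, which yield
\begin{equation*}
2\,\mathbb{E}\sup_{n\le t\le n+1}\left|\int_n^t e^{\eta s}\bigl(\sigma,U\bigr)\,d\mathcal{B}_s\right|\le\frac14\,\mathbb{E}\sup_{n\le s\le n+1}e^{\eta s}\|U(s)\|_V^2+C\,\mathbb{E}\int_n^{n+1}e^{(\eta-\eta_1)s}\bigl(1+\|U(s)\|_V^2\bigr)\,ds .
\end{equation*}
Absorbing the $\tfrac14$--term into the left side gives $\mathbb{E}\sup_{[n,n+1]}e^{\eta t}\|U(t)\|_V^2\le K$, and since $e^{\eta t}\ge e^{\eta n}$ on $[n,n+1]$ the core estimate follows.

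To finish, fix $\varepsilon\in(0,\eta)$ and apply Chebyshev's inequality together with the core estimate:
\begin{equation*}
\mathbb{P}\Bigl(\sup_{n\le t\le n+1}\|U(t)\|_V^2>e^{-(\eta-\varepsilon)n}\Bigr)\le e^{(\eta-\varepsilon)n}\,\mathbb{E}\sup_{n\le t\le n+1}\|U(t)\|_V^2\le K e^{-\varepsilon n}.
\end{equation*}
These probabilities are summable, so by the Borel--Cantelli lemma there is a set of full $\mathbb{P}$--measure on which, for some $n_0(\omega)$ and all $n\ge n_0(\omega)$, $\sup_{n\le t\le n+1}\|U(t)\|_V\le e^{-(\eta-\varepsilon)n/2}$. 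For $t\in[n,n+1]$ with $n\ge n_0$ this forces $\tfrac1t\log\|U(t)\|_V\le-\tfrac{\eta-\varepsilon}{2}\cdot\tfrac{t-1}{t}$, and letting $t\to\infty$ yields $\limsup_{t\to\infty}\tfrac1t\log\|U(t)\|_V\le-\lambda$ with $\lambda:=(\eta-\varepsilon)/2>0$, which is the assertion. I expect the only genuinely delicate point to be the bookkeeping in the Burkholder--Davis--Gundy step: one must keep the weight $e^{\eta s}$ inside the bracket so that the self--improving supremum can be absorbed while the remaining term still carries the decaying factor $e^{(\eta-\eta_1)n}$; everything else is routine once Theorem \ref{EXPSTAB} is in hand.
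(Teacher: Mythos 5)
Your proof is correct and follows essentially the same route as the paper: both derive the key estimate $\mathbb{E}\sup_{t\in[N,N+1]}\|U(t)\|_{V}^{2}\leq \Lambda' e^{-\eta N}$ via the Burkholder--Davis--Gundy absorption trick combined with the mean--square decay \eqref{STABILITYmoment}, and then conclude by Borel--Cantelli. The only differences are cosmetic --- you carry the weight $e^{\eta t}$ inside the Itô computation whereas the paper works with $\|U(t)\|_{V}^{2}$ unweighted and invokes \eqref{STABILITYmoment} afterwards, and you spell out the Chebyshev/Borel--Cantelli endgame that the paper leaves implicit.
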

\begin{proof}
Recall \eqref{F(u)} and apply Itô's formula to the function $ \|U(t)\|_{V}^{2} $, we obtain
\begin{align*}
d\|U(t)\|_{V}^{2}& = 2(\widetilde{f},U(t))_{V}dt + 2\sum_{k=1}^\infty(\widetilde{\sigma}^k,U(t))_{V}d\beta_{k}(t)+ \sum_{k=1}^\infty\| \widetilde{\sigma}^k\|^2_V dt,
\end{align*}
where $\widetilde{f}$ (resp. $\widetilde{\sigma}^k,$ $k\in \mathbb{N}^+$) is the solution to the modified Stokes problem \eqref{Stokes1}, for $f = f(U)$ (resp. $f=\sigma^k(\cdot,U),$ $k \in \mathbb{N}^+$). Thanks to \eqref{Stokes1eq}, \eqref{Stokes1ineq} and \eqref{IDENTIF}, we get
\begin{align*}
d\|U(t)\|_{V}^{2} &\leq 2(f(U),U(t))dt + 2(\sigma(t,U),U(t))d\mathcal{B}_{t}+ C\|\sigma(t,U)\|^2_{L_{2}^0}dt.
\end{align*}
Let $ N \in \mathbb{N} $. Integrating over the time interval $ [N,t], $  yields
\begin{align}
\|U(t)\|_{V}^{2} \leq \|U(N)\|_{V}^{2} + 2\int_{N}^{t}(f(U),U(s))ds + 2\int_{N}^{t}(\sigma(s,U),U(s))d\mathcal{B}_{s}+ C\int_{N}^{t}\| \sigma(s,U)\|^2_{L_{2}^0}ds.\label{4.16}
\end{align}
The Burkholder-Davis-Gundy inequality and Young's inequality imply
\begin{align}
&\mathbb{E}\sup_{t \in [N, N+1]}\left\vert \int_{N}^{t} (\sigma(s,U),U(s))d\mathcal{B}_{s} \right\vert  \leq K \mathbb{E}\left(\int_{N}^{N+1} \vert (\sigma(t,U),U(t))\vert^{2}dt\right)^{\frac{1}{2}} \notag\\ & \leq K \mathbb{E}\left( \int_{N}^{N+1}\|\sigma(t,U)\|_{L_{2}^{0}}^{2}\|U(t)\|_{V}^{2}dt\right)^{\frac{1}{2}}
\leq \mathbb{E}\left( \sup_{t \in[N,N+1]}\|U(t)\|_{V}^{2} K^{2}\int_{N}^{N+1}\|\sigma(t,U)\|_{L_{2}^{0}}^{2}dt\right)^{\frac{1}{2}}\notag\\ & \leq\dfrac{1}{4} \mathbb{E} \sup_{t \in[N,N+1]}\|U(t)\|_{V}^{2} + K^{2}\mathbb{E}\int_{N}^{N+1}\|\sigma(t,U)\|_{L_{2}^{0}}^{2}dt. \label{BURKHexp}
\end{align}
Taking the supremum on $ t\in [N,N+1] $, applying the expectation and then substituting \eqref{BURKHexp} in the inequality \eqref{4.16}, give
\begin{align}
\frac{1}{2}\mathbb{E}\sup_{t\in [N,N+1]}\|U(t)\|_{V}^{2} &\leq \mathbb{E}\|U(N)\|_{V}^{2} + 2\int^{N+1}_{N}\mathbb{E}(f(U),U(t))dt+ (2K^{2}+C)\int_{N}^{N+1}\| \sigma(t,U)\|^2_{L_{2}^0}dt.\label{NINEQ}
\end{align}
In addition, taking into account the estimates \eqref{1S}-\eqref{2S} and the assumption \eqref{DIFSTAT}, we infer
\begin{align*}
\dfrac{1}{2}\mathbb{E}\sup_{t\in [N,N+1]}\|U(t)\|_{V}^{2} &\leq \mathbb{E}\|U(N)\|_{V}^{2} + \int_{N}^{N+1} \left(-2\mu + c_\Phi +\dfrac{2(c_{\alpha_1}^2+c_{\alpha_{2}}^2)}{\beta}\right)\mathbb{E}\|U(t)\|_{V}^{2}dt \notag\\
& + C_\ell(1+2K^2+C)\int_{N}^{N+1}e^{-\eta_1 t}dt + C_\ell(1+2K^2+C)\int_{N}^{N+1}e^{-\eta_1 t}\mathbb{E}\|U(t)\|_{V}^{2}dt.
\end{align*}
Thanks to \eqref{Largemu} and \eqref{STABILITYmoment}, we derive for $ \eta\in (0,\eta_{1}) $
\begin{align*}
\dfrac{1}{2}\mathbb{E}\sup_{t\in [N,N+1]}\|U(t)\|_{V}^{2}
& \leq \Lambda e^{-\eta N} + \frac{C_\ell(2K^{2}+C)}{\eta_1} e^{-\eta_1 N} + C_{\ell}(2K^{2}+C)\Lambda \int_{N}^{N+1} e^{-(\eta+\eta_{1})t} dt.
\end{align*}
Performing simple computations, we obtain for $ \eta \in (0,\eta_{1}) $
\begin{align}
\mathbb{E}\sup_{t\in [N,N+1]}\|U(t)\|_{V}^{2} & \leq \Lambda' e^{-\eta N},
\end{align}
where $ \Lambda' := 2\left(\Lambda + \frac{C_{\ell}(2K^{2}+C)}{\eta_{1}}+\frac{C_{\ell}(2K^{2}+C)\Lambda}{\eta+\eta_{1}}\right). $ The Borel-Cantelli lemma implies the desired result.
\end{proof}

\begin{rmq}\label{F_instead_Phi}
It is worth noting that existence results for martingale solutions in Section \ref{section3} remain valid if we replace the force $\Phi(\cdot, U)$ with an external force $F \in L^2(\Omega \times [0,T]; \mathbf{L}^2(\mathcal{D}))$.
\end{rmq}
\begin{rmq}\label{Remarkregular}
Considering the deterministic system of steady motion type of a third grade fluid contained in a $2$ or $3$-dimensional bounded and simply connected domain $\mathcal{D}$ with a regular boundary $\partial \mathcal{D},$ Viedman \cite{videman1997mathematical}, proved for a sufficiently small external force $h$, the existence and uniqueness of stationary solutions $u^\infty \in \mathbf{W}^{2,d}(\mathcal{D}) \cap V$ in $2d$ and only for a class of small solutions in $3d$. Later on, Bernard and Ouazar \cite{bernard2005stationary}, proved the existence and uniqueness of stationary solutions $u^\infty \in \mathbf{H}^3(\mathcal{D})$ in $2d$ and $3d$ for a sufficiently small force $h \in H(\text{curl}, \mathcal{D})$.
\end{rmq}
\begin{rmq}
Taking into account the existence and uniqueness results of stationary solutions in the aforementioned articles \cite{videman1997mathematical, bernard2005stationary}, we can prove stability results similar to those studied in Section \ref{section4}. More precisely, let $u =(u^{i})_{i=1}^{d},$ $d=2,3,$ denote the velocity vector field and consider the steady system 
\begin{align}
\left\{
\begin{array}{ccc}
- \mu\Delta u + u\cdot \nabla u - \alpha_{1}\mathrm{div}(u \cdot \nabla A(u)+(\nabla u)^{T}A(u) +A(u)(\nabla u))\qquad\quad \vspace{2mm} \\
		\vspace{2mm}
		\quad\quad\qquad\qquad-\alpha_{2} \mathrm{div}(A(u)^{2})-\beta{\rm div}\left(|A(u)|^2 A(u)\right)= h -\nabla p
		&  \multicolumn{1}{l}{\mbox{in}\
			\mathcal{D},} \vspace{2mm}\\
			\multicolumn{1}{l}{\mathrm{div}\,u=0} & \multicolumn{1}{l}{\mbox{in}\
			\mathcal{D},} \vspace{2mm}\\
			\multicolumn{1}{l}{u=0} & \multicolumn{1}{l}{\mbox{on}\
			\partial{\mathcal{D}},}
\end{array}
\right.\label{stationary}
\end{align}
then, under the following regularities of the stationary solution, $u^\infty \in \mathbf{W}^{2,d}(\mathcal{D})\cap V$ or $u^\infty \in \mathbf{H}^3(\mathcal{D})$ (see Remark \ref{Remarkregular}), by imposing appropriate conditions on the exponential decay of the external force $F$ towards $h$ and assuming that the viscosity is sufficiently large, we can show that  martingale solutions $U(t)$ to the system \eqref{syst}, with $F$ instead of $\Phi(\cdot, U)$ (see Remark \ref{F_instead_Phi}), converge to $u^\infty$ exponentially, in the mean square and almost surely, as $t \to \infty$. Moreover, we emphasize that in this case, the almost sure exponential stability relies on the monotonicity property \eqref{monotonicity}.
\end{rmq}

\begin{center}
\section{Conclusion}
\end{center}
Taking an initial condition in the Sobolev space $\mathbf{H}^{1}(\mathcal{D})$, we proved the existence of martingale solutions to the equations of a third grade fluid filling a $2$ or $3$-dimensional bounded and simply connected domain $\mathcal{D}$, with regular boundary $\partial\mathcal{D}$, perturbed by a multiplicative Wiener noise, and supplemented by Dirichlet boundary conditions. The solution is given by a probabilistic system, where the cylindrical Wiener process constitutes a part of it and the velocity vector field is a stochastic process with sample paths in the space $L^{\infty}(0,T;\mathbf{H}^{1}(\mathcal{D}))$.
The adopted strategy is based on the conjugation of both  stochastic compactness criteria involving the tightness property, Prokhorov and Skorokhod theorems, and monotonicity properties of a non-linear operator. In addition, we proved some stability properties of martingale solutions under the condition of a large viscosity.

\centering
\bibliographystyle{abbrv}
\bibliography{3rd-grade-fluids-Raya}

\begin{thebibliography}{10}

\bibitem{almeida2020weak}
A.~Almeida and F.~Cipriano.
\newblock Weak solution for 3d-stochastic third grade fluid equations.
\newblock {\em Water}, 12(11):3211, 2020.

\bibitem{amrouche1997class}
C.~Amrouche and D.~Cioranescu.
\newblock On a class of fluids of grade 3.
\newblock {\em International journal of non-linear mechanics}, 32(1):73--88,
  1997.

\bibitem{bensoussan1995stochastic}
A.~Bensoussan.
\newblock Stochastic navier-stokes equations.
\newblock {\em Acta Applicandae Mathematica}, 38(3):267--304, 1995.

\bibitem{bernard2005stationary}
J.~Bernard and E.~Ouazar.
\newblock Stationary problem of third-grade fluids in two and three dimensions:
  existence and uniqueness.
\newblock {\em International Journal of Non-Linear Mechanics}, 40(5):603--620,
  2005.

\bibitem{breckner1999}
H.~Breckner.
\newblock {\em Approximation and optimal control of the stochastic
  Navier-Stokes equations}.
\newblock Ph.D. Thesis, Halle (Saale), 1999.

\bibitem{brzezniak2018stochastic}
Z.~Brze{\'z}niak, E.~Hausenblas, and P.~A. Razafimandimby.
\newblock Stochastic reaction-diffusion equations driven by jump processes.
\newblock {\em Potential analysis}, 49(1):131--201, 2018.

\bibitem{brzezniak2013existence}
Z.~Brze{\'z}niak and E.~Motyl.
\newblock Existence of a martingale solution of the stochastic navier--stokes
  equations in unbounded 2d and 3d domains.
\newblock {\em Journal of Differential Equations}, 254(4):1627--1685, 2013.

\bibitem{busuioc2006non}
A.~V. Busuioc and D.~Iftimie.
\newblock A non-newtonian fluid with navier boundary conditions.
\newblock {\em Journal of Dynamics and Differential Equations}, 18(2):357--379,
  2006.

\bibitem{busuioc2008steady}
A.~V. Busuioc, D.~Iftimie, and M.~Paicu.
\newblock On steady third grade fluids equations.
\newblock {\em Nonlinearity}, 21(7):1621, 2008.

\bibitem{busuioc2003second}
A.~V. Busuioc and T.~S. Ratiu.
\newblock The second grade fluid and averaged euler equations with navier-slip
  boundary conditions.
\newblock {\em Nonlinearity}, 16(3):1119, 2003.

\bibitem{busuioc2004global}
V.~Busuioc and D.~Iftimie.
\newblock Global existence and uniqueness of solutions for the equations of
  third grade fluids.
\newblock {\em International Journal of Non-Linear Mechanics}, 39(1):1--12,
  2004.

\bibitem{caraballo2002exponential}
T.~Caraballo, J.~A. Langa, and T.~Taniguchi.
\newblock The exponential behaviour and stabilizability of stochastic
  2d-navier--stokes equations.
\newblock {\em Journal of Differential Equations}, 179(2):714--737, 2002.

\bibitem{chemetov2018optimal}
N.~Chemetov and F.~Cipriano.
\newblock Optimal control for two-dimensional stochastic second grade fluids.
\newblock {\em Stochastic Processes and their Applications}, 128(8):2710--2749,
  2018.

\bibitem{chemetov2013boundary}
N.~V. Chemetov and F.~Cipriano.
\newblock Boundary layer problem: Navier--stokes equations and euler equations.
\newblock {\em Nonlinear Analysis: Real World Applications}, 14(6):2091--2104,
  2013.

\bibitem{chemetov2013inviscid}
N.~V. Chemetov and F.~Cipriano.
\newblock The inviscid limit for the navier--stokes equations with slip
  condition on permeable walls.
\newblock {\em Journal of Nonlinear Science}, 23(5):731--750, 2013.

\bibitem{chen2019martingale}
R.~M. Chen, D.~Wang, and H.~Wang.
\newblock Martingale solutions for the three-dimensional stochastic
  nonhomogeneous incompressible navier--stokes equations driven by l{\'e}vy
  processes.
\newblock {\em Journal of Functional Analysis}, 276(7):2007--2051, 2019.

\bibitem{cipriano2019asymptotic}
F.~Cipriano.
\newblock On the asymptotic behaviour and stochastic stabilization of second
  grade fluids.
\newblock {\em Stochastics}, 91(7):1020--1040, 2019.

\bibitem{cipriano2021well}
F.~Cipriano, P.~Didier, and S.~Guerra.
\newblock Well-posedness of stochastic third grade fluid equation.
\newblock {\em Journal of Differential Equations}, 285:496--535, 2021.

\bibitem{da2014stochastic}
G.~Da~Prato and J.~Zabczyk.
\newblock {\em Stochastic equations in infinite dimensions}.
\newblock Cambridge university press, 2014.

\bibitem{dunn1995fluids}
J.~Dunn and K.~Rajagopal.
\newblock Fluids of differential type: critical review and thermodynamic
  analysis.
\newblock {\em International Journal of Engineering Science}, 33(5):689--729,
  1995.

\bibitem{flandoli1995martingale}
F.~Flandoli and D.~Gatarek.
\newblock Martingale and stationary solutions for stochastic navier-stokes
  equations.
\newblock {\em Probability Theory and Related Fields}, 102:367--391, 1995.

\bibitem{fosdick1980thermodynamics}
R.~Fosdick and K.~Rajagopal.
\newblock Thermodynamics and stability of fluids of third grade.
\newblock {\em Proceedings of the Royal Society of London. A. Mathematical and
  Physical Sciences}, 369(1738):351--377, 1980.

\bibitem{hayat2003exact}
T.~Hayat, A.~Kara, and E.~Momoniat.
\newblock Exact flow of a third-grade fluid on a porous wall.
\newblock {\em International Journal of Non-Linear Mechanics},
  38(10):1533--1537, 2003.

\bibitem{hayat2007analytical}
T.~Hayat, F.~Shahzad, and M.~Ayub.
\newblock Analytical solution for the steady flow of the third grade fluid in a
  porous half space.
\newblock {\em Applied Mathematical Modelling}, 31(11):2424--2432, 2007.

\bibitem{kim2009existence}
H.~Kim.
\newblock Existence and regularity of very weak solutions of the stationary
  navier--stokes equations.
\newblock {\em Archive for rational mechanics and analysis}, 193(1):117--152,
  2009.

\bibitem{liu2015stochastic}
W.~Liu and M.~R{\"o}ckner.
\newblock {\em Stochastic partial differential equations: an introduction}.
\newblock Springer, 2015.

\bibitem{paicu2008global}
M.~Paicu.
\newblock Global existence in the energy space of the solutions of a
  non-newtonian fluid.
\newblock {\em Physica D: Nonlinear Phenomena}, 237(10-12):1676--1686, 2008.

\bibitem{raja2022intelligent}
M.~A.~Z. Raja, R.~Tabassum, E.~R. El-Zahar, M.~Shoaib, M.~I. Khan, M.~Malik,
  S.~U. Khan, and S.~Qayyum.
\newblock Intelligent computing through neural networks for entropy generation
  in mhd third-grade nanofluid under chemical reaction and viscous dissipation.
\newblock {\em Waves in Random and Complex Media}, pages 1--25, 2022.

\bibitem{rasheed2017stabilized}
A.~Rasheed, A.~Kausar, A.~Wahab, and T.~Akbar.
\newblock Stabilized approximation of steady flow of third grade fluid in
  presence of partial slip.
\newblock {\em Results in physics}, 7:3181--3189, 2017.

\bibitem{razafimandimby2010weak}
P.~A. Razafimandimby and M.~Sango.
\newblock Weak solutions of a stochastic model for two-dimensional second grade
  fluids.
\newblock {\em Boundary Value Problems}, 2010:1--47, 2010.

\bibitem{roubivcek2013nonlinear}
T.~Roub{\'\i}{\v{c}}ek.
\newblock {\em Nonlinear partial differential equations with applications},
  volume 153.
\newblock Springer Science \& Business Media, 2013.

\bibitem{simon1986compact}
J.~Simon.
\newblock Compact sets in the space $l^{p}(0,t;b)$.
\newblock {\em Annali di Matematica pura ed applicata}, 146(1):65--96, 1986.

\bibitem{tahraoui2023local}
Y.~Tahraoui and F.~Cipriano.
\newblock Local strong solutions to the stochastic third grade fluid equations
  with navier boundary conditions.
\newblock {\em arXiv preprint arXiv:2302.05672}, 2023.

\bibitem{tahraoui2023optimal}
Y.~Tahraoui and F.~Cipriano.
\newblock Optimal control of two dimensional third grade fluids.
\newblock {\em Journal of Mathematical Analysis and Applications},
  523(2):127032, 2023.

\bibitem{temam2001navier}
R.~Temam.
\newblock {\em Navier-Stokes equations: theory and numerical analysis}, volume
  343.
\newblock American Mathematical Soc., 2001.

\bibitem{videman1997mathematical}
J.~H. Videman.
\newblock {\em Mathematical analysis of viscoelastic non-Newtonian fluids}.
\newblock Thesis, University of Lisbon, 1997.

\bibitem{zhang2023entropy}
X.~Zhang, D.~Yang, N.~M. Katbar, W.~Jamshed, I.~Ullah, M.~R. Eid, Z.~Raizah,
  R.~W. Ibrahim, H.~A. E.-W. Khalifa, S.~M. El~Din, et~al.
\newblock Entropy and thermal case description of monophase magneto nanofluid
  with thermal jump and ohmic heating employing finite element methodology.
\newblock {\em Case Studies in Thermal Engineering}, 45:102919, 2023.

\end{thebibliography}

\end{document}